\numberwithin{equation}{section}
\setlist[enumerate,1]{label=\upshape{(\roman*)},ref=\roman*}
\newtheorem{theorem}{Theorem}[section]
\newtheorem{proposition}[theorem]{Proposition}
\newtheorem{corollary}[theorem]{Corollary}
\newtheorem{lemma}[theorem]{Lemma}
\newtheorem{conjecture}[theorem]{Conjecture}
\theoremstyle{definition}
\newtheorem{definition}[theorem]{Definition}
\newtheorem*{acknowledgements}{Acknowledgements}
\theoremstyle{remark}
\newtheorem{remark}[theorem]{Remark}
\DeclareMathOperator{\sech}{sech}
\newcommand{\R}{\mathbf{R}}
\renewcommand{\S}{\mathbf{S}}
\newcommand{\N}{\mathbf{N}}
\newcommand{\Z}{\mathbf{Z}}
\title[Embeddedness and graphicality of the elastic flow]{Embeddedness and graphicality of the elastic flow for complete curves}
\author[T.~Miura]{Tatsuya Miura}
\address[T.~Miura]{Department of Mathematics, Graduate School of Science, Kyoto University, Kitashirakawa Oiwake-cho, Sakyo-ku, Kyoto 606-8502, Japan}
\email{tatsuya.miura@math.kyoto-u.ac.jp}
\author[F.~Rupp]{Fabian Rupp}
\address[F.~Rupp]{Faculty of Mathematics, University of Vienna, Oskar-Morgenstern-Platz 1, 1090 Vienna, Austria.}
\email{fabian.rupp@univie.ac.at}
\date{\today}
\keywords{Elastic flow, complete curve, embeddedness, graphicality, positivity breaking, energy threshold}
\subjclass[2020]{53E40 (primary), 53A04, 49Q10 (secondary)}
\begin{document}

\begin{abstract}
We study positivity-preserving properties for the elastic flow of non-compact, complete curves in Euclidean space. Despite the fact that the canonical elastic energy is infinite in this context, we extend our recent work based on the adapted elastic energy to derive nontrivial optimal thresholds for maintaining planar embeddedness and graphicality, respectively. We also obtain a new Li--Yau type inequality for complete planar curves.
\end{abstract}

\maketitle

\section{Introduction}

\emph{Maximum principles} are key ingredients in the analysis of second order partial differential equations. Indeed, in the realm of geometric flows (e.g., mean curvature-type flows), they imply the preservation in time of various geometric ``positivity'' properties of the initial datum, including, for instance, convexity, embeddedness, or graphicality.
These arguments are well-known to fail in the case of \emph{higher order} geometric flows, resulting in situations where these properties are lost in finite time, see \cite{Blatt2010} for convexity and embeddedness and \cite{ElliottMaier-Paape2001} for graphicality. However, the precise extent of this phenomenon remains largely unexplored. 

Recently, several advances in this direction have been established for the \emph{elastic flow}, a typical example of fourth-order geometric flows.
The elastic flow for closed curves $\gamma$ in $\R^n$ (or non-closed curves satisfying suitable boundary conditions) is obtained as the $L^2(ds)$-gradient flow of the \emph{elastic energy}
\begin{align}
     \hat E[\gamma] \vcentcolon= B[\gamma]+L[\gamma] = \int_\gamma\left(\frac{1}{2}|\kappa|^2 +1\right) ds,
\end{align}
resulting in the fourth order evolution equation
\begin{equation}\label{eq:EF}
    \partial_t\gamma = -\nabla_s^2\kappa - \frac{1}{2}|\kappa|^2\kappa + \kappa. \tag{EF}
\end{equation}
Here $L[\gamma]=\int_\gamma ds$ denotes the \emph{length functional}, $B[\gamma]=\frac{1}{2}\int_\gamma|\kappa|^2ds$ denotes the \emph{bending energy}, $s$ is the \emph{arclength parameter}, $\kappa \vcentcolon = \partial_s^2\gamma$ is the \emph{curvature vector}, and $\nabla_s \vcentcolon = \partial_s - \langle \partial_s\gamma, \partial_s \cdot\rangle \partial_s\gamma$ is the \emph{normal derivative} along $\gamma$.
Recall that $ds=|\partial_x\gamma|dx$ and $\partial_s\vcentcolon = |\partial_x\gamma|^{-1}\partial_x$, where $x$ denotes the original variable of $\gamma$.
Initial boundary value problems involving \eqref{eq:EF} have been studied in various works, see, e.g., \cite{Mantegazza_Pluda_Pozzetta_21_survey,miura2025newenergymethodshortening} and the references therein, establishing local well-posedness, global existence, and convergence as $t\to\infty$ in various settings.

In previous work \cite{Miura_Muller_Rupp_2025_optimal}, M\"uller and the authors obtained an \emph{energy-based alternative} to maximum principles that ensures the preservation of embeddedness for the elastic flow of closed curves.
More precisely, the main finding in \cite{Miura_Muller_Rupp_2025_optimal} is a threshold $C^*(n)$ such that any embedded closed curve $\gamma\colon \S^1\to\R^n$ with $\hat E[\gamma]\leq C^*(n)$ remains embedded under the elastic flow, while any slightly larger threshold allows for an embeddedness-breaking in finite time. The constant $C^*(n)$ is determined by certain possibly non-smooth critical shapes involving \emph{elasticae}, i.e., critical points of $\hat{E}$. The value of $C^*(n)$ differs in the planar case $n=2$ from the case of higher codimension $n\geq 3$. See also \cite{MR4565935,MR4631455,Kemmochi_Miura_2024_migration,miura2024migrating} for further work on the validity and failure of positivity-preserving properties for \eqref{eq:EF}.

Here we consider non-closed complete curves (with infinite length). A fundamental issue as opposed to the closed case is that the canonical elastic energy $\hat{E}=B+L$ is \emph{always infinite}, and thus cannot provide any quantitative thresholds.
In this paper, instead, we work with the adapted elastic energy $E\vcentcolon=B+D$, where the length $L$ is replaced by the \emph{direction energy} $D[\gamma]\vcentcolon=\frac{1}{2}\int_\gamma|\partial_s\gamma-e_1|^2ds$, yielding
\begin{equation}\label{eq:energy_E=B+D}
    E[\gamma] = B[\gamma]+D[\gamma] = \int_\gamma \left( \frac{1}{2}|\kappa|^2+\frac{1}{2}|\partial_s\gamma-e_1|^2 \right) ds.
\end{equation}
Here and in the sequel $\{e_i\}_{i=1}^n$ denotes the canonical basis of $\R^n$.
This energy was first introduced and employed in \cite{Miura20,miura2024uniqueness} to variationally handle certain infinite-length elasticae.
A crucial observation is that the two different quantities $L$ and $D$ are in fact equal up to a null Lagrangian, thus yielding the same first variation. In the authors' recent work \cite{miura2025newenergymethodshortening}, this idea was extended to a class of gradient flows involving $L$, unlocking the application of energy methods in the infinite-length regime. In particular, a global well-posedness theory was established for the elastic flow \eqref{eq:EF} of complete curves, which not only improves \cite{Novaga_Okabe_2014_infinite} in several aspects but also establishes the key property that the adapted energy $E$ decays by
\begin{align}\label{eq:energy_decay}
    E[\gamma(\tau,\cdot)] + \int_0^\tau \int_{\gamma(t,\cdot)} |\partial_t \gamma|^2 ds dt = E[\gamma(0,\cdot)]\qquad \text{ for all }\tau \in [0,\infty).
\end{align}
Thus, the adapted energy $E$ opens the possibility of obtaining optimal thresholds in the spirit of \cite{Miura_Muller_Rupp_2025_optimal} for complete elastic flows.

The contribution of this work is twofold. Firstly, in analogy with \cite{Miura_Muller_Rupp_2025_optimal}, we examine embeddedness-preservation in the planar case. We identify a planar curve $\gamma_P:\R\to\R^2$ (see \Cref{def:elastic_pendant} and \Cref{fig:elastic_pendant}), which we call the \emph{elastic pendant}, that determines the optimal threshold.

\begin{figure}[htbp]
    \centering
    \includegraphics[width=0.6\linewidth]{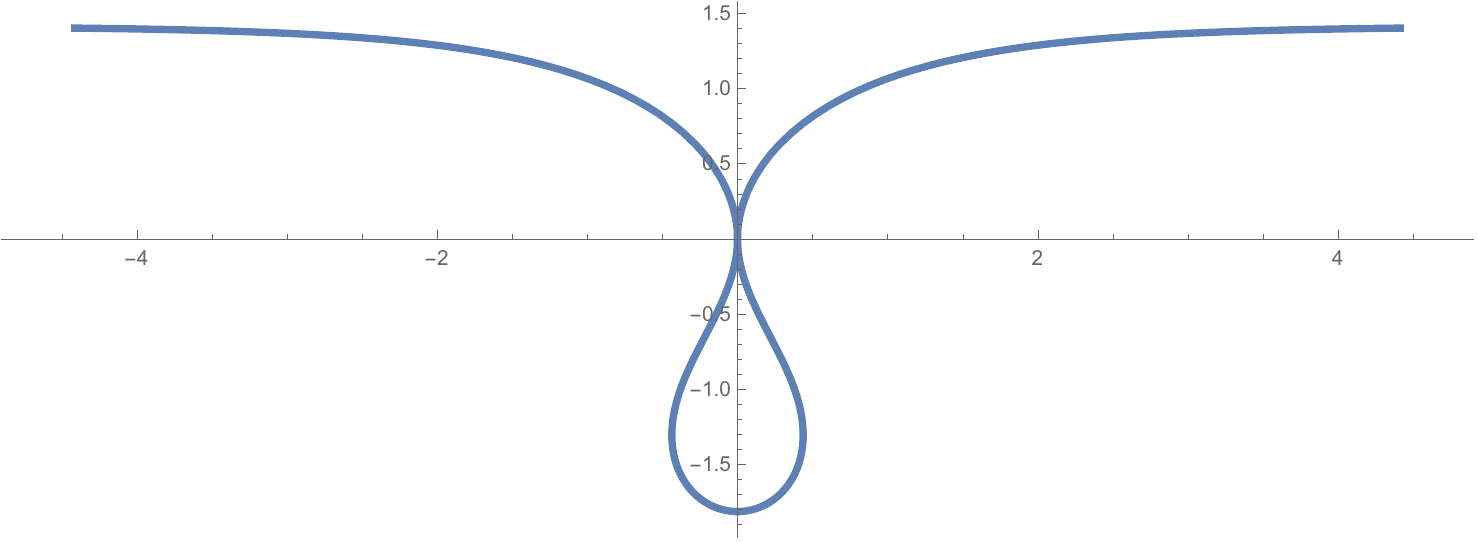}
    \caption{Elastic pendant $\gamma_P$: the optimal shape for planar embeddedness-preservation (see \Cref{def:elastic_pendant}).}
    \label{fig:elastic_pendant}
\end{figure}

The space $\dot{C}^\infty$ used in the sequel denotes smooth functions with bounded derivatives and appears in the well-posedness theory for \eqref{eq:EF} developed in \cite{miura2025newenergymethodshortening}.
\begin{theorem}\label{thm:EF_embedded}
    Let $\gamma_0\in \dot{C}^\infty(\R;\R^2)$ with $\inf_{\R}|\partial_x\gamma_0|>0$.
    Suppose that $\gamma_0$ is embedded and
    \begin{equation}
        E[\gamma_0]\leq E[\gamma_P] \quad(\approx 10.906581).
    \end{equation}
    Then the elastic flow \eqref{eq:EF} starting from $\gamma_0$ remains embedded for all $t\geq0$, and converges to a straight line as $t\to\infty$ (up to reparametrization by arclength and translation) in the sense that
    \begin{equation}
        \lim_{t\to\infty}\sup_{\R}|\partial_s\gamma-e_1|=0, \quad \lim_{t\to\infty}\sup_{\R}|\partial_s^m\kappa|=0,
    \end{equation}
    for all $m\in\N_0$.
    The energy threshold is optimal; for any $\varepsilon>0$ there exists a planar embedded initial curve $\gamma_0$ with $E[\gamma_0]< E[\gamma_P] + \varepsilon$ such that the flow is not embedded at some time $t_0>0$.
\end{theorem}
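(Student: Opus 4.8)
The plan is to derive the preservation of embeddedness from the energy monotonicity \eqref{eq:energy_decay} together with a sharp geometric lower bound for self-touching complete curves --- the Li--Yau type inequality for complete planar curves established separately in this paper --- via a first-time-of-contact argument. Since \eqref{eq:energy_decay} makes the adapted energy non-increasing, one has $E[\gamma(t,\cdot)]\le E[\gamma_0]\le E[\gamma_P]$ for all $t\ge0$. Assume for contradiction that embeddedness is lost and let $t_0>0$ be the first such time, which is well defined because the global well-posedness theory of \cite{miura2025newenergymethodshortening} provides continuity in $t$ and uniform control of all $\partial_s^m\kappa$; the finiteness of the direction energy $D$ forces both ends of $\gamma(t_0,\cdot)$ to be asymptotically straight and parallel to $e_1$, which confines any nascent self-intersection to a bounded region and rules out crossings escaping to infinity. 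Hence the first self-intersection of $\gamma(t_0,\cdot)$ is a tangential self-contact, so $\gamma(t_0,\cdot)$ is a complete curve of multiplicity at least two, and the Li--Yau type inequality yields $E[\gamma(t_0,\cdot)]\ge E[\gamma_P]$. If $E[\gamma_0]<E[\gamma_P]$ this contradicts monotonicity. In the borderline case $E[\gamma_0]=E[\gamma_P]$ the chain $E[\gamma(t_0,\cdot)]\ge E[\gamma_P]=E[\gamma_0]\ge E[\gamma(t_0,\cdot)]$ collapses to equalities, so \eqref{eq:energy_decay} gives $\int_0^{t_0}\int|\partial_t\gamma|^2\,ds\,dt=0$, whence $\partial_t\gamma\equiv0$ and $\gamma(t_0,\cdot)=\gamma_0$; but then $\gamma_0$ would itself be self-touching, contradicting the embeddedness of the initial datum. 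This settles preservation in both cases.

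For the convergence I would invoke the asymptotic analysis of the complete flow. From \eqref{eq:energy_decay} one gets $\int_0^\infty\int|\partial_t\gamma|^2\,ds\,dt<\infty$, and combined with the time-uniform bounds on $\partial_s^m\kappa$ from \cite{miura2025newenergymethodshortening} this yields subconvergence along some $t_k\to\infty$ to a critical point of $E$, that is, a complete elastica with $E\le E[\gamma_P]$. Since embeddedness persists, the limit is embedded, and by the classification of admissible critical points below the threshold the only possibility is a straight line; the stated decay of $\sup_\R|\partial_s\gamma-e_1|$ and of all $\sup_\R|\partial_s^m\kappa|$ then follows, with full (not merely subsequential) convergence obtained via a \L ojasiewicz--Simon argument. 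I expect this part to be essentially an adaptation of the framework already developed in \cite{miura2025newenergymethodshortening}.

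For optimality I would argue by perturbation and continuity rather than by analyzing the dynamics near $\gamma_P$ directly. Opening the self-contact of $\gamma_P$ to the self-overlapping side produces, by continuity of $E$, a curve $\sigma$ with a single transversal self-intersection and $E[\sigma]<E[\gamma_P]+\varepsilon/2$. A transversal self-intersection is an open condition and the flow is continuous in time, so the solution from $\sigma$ stays non-embedded on a short interval $[0,t_1]$ with $t_1>0$ and $\gamma_\sigma(t_1,\cdot)$ carries a $C^1$-stable crossing. I then perturb $\sigma$ locally near the contact into an embedded curve $\gamma_0$ with $E[\gamma_0]<E[\gamma_P]+\varepsilon$ and $\gamma_0$ as close to $\sigma$ as desired; continuous dependence of the flow on the initial datum, from the well-posedness theory, makes $\gamma_{\gamma_0}(t_1,\cdot)$ arbitrarily $C^1_{loc}$-close to $\gamma_\sigma(t_1,\cdot)$, so the transversal crossing survives and $\gamma_{\gamma_0}(t_1,\cdot)$ is non-embedded while $\gamma_0$ is embedded, giving a first failure time $t_0\in(0,t_1]$. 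The main obstacle underlying the whole scheme is the sharp Li--Yau type inequality itself: pinning down the exact constant $E[\gamma_P]$ and establishing the rigidity that $\gamma_P$ is, up to symmetry, the unique self-touching extremizer is precisely what makes the threshold optimal and is where the real difficulty lies.
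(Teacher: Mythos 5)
Your overall skeleton (energy monotonicity, a first-contact argument showing the first self-intersection must be tangential, a sharp lower bound for the contact configuration, and well-posedness for optimality) matches the paper's strategy, but there are two genuine gaps.

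First, the lower bound you invoke is not the paper's Li--Yau inequality and is false as stated. \Cref{thm:borderline_minimality} gives only $E\geq 8$ for self-intersecting complete planar curves, with the borderline elastica as extremizer, and $8<E[\gamma_P]\approx 10.9$. The bound $E\geq E[\gamma_P]$ does not follow from tangentiality of the self-contact alone: a straight line with a circle of radius $1/\sqrt{2}$ attached tangentially at one point is a complete $W^{2,\infty}$ curve with a tangential (same-direction) self-intersection and $E=2\sqrt{2}\pi\approx 8.89<E[\gamma_P]$. What makes the threshold $E[\gamma_P]$ correct is the additional constraint $N[\gamma]=0$, which is preserved by the flow and vanishes for embedded curves; the key variational input is \Cref{thm:elastic_pendant_minimality} (minimality of the elastic pendant among curves with $N=0$ \emph{and} a tangential self-intersection), whose proof splits the curve at the contact and combines \Cref{thm:initialangle_borderline_minimality}, the teardrop characterization \Cref{thm:teardrop_minimality}, and the cut-and-paste comparison \Cref{lem:06-08}. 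Your proposal never uses the rotation number, so the contradiction at the first contact time does not close. (Your handling of the borderline case $E[\gamma_0]=E[\gamma_P]$ via the energy identity, and your convergence sketch, are essentially the paper's; for the latter one must exclude the borderline elastica, the only nontrivial critical point below the threshold, because its transversal self-intersection is stable while the flow stays embedded.)

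Second, the optimality construction cannot work as described. A transversal self-intersection is stable under $C^0$-small perturbations: two transversally crossing sub-arcs of $\sigma$ must still intersect after any sufficiently small perturbation, so there is no \emph{embedded} curve $\gamma_0$ ``as close to $\sigma$ as desired''; every curve sufficiently close to $\sigma$ is itself non-embedded, and your scheme never produces an admissible initial datum. The paper instead perturbs the tangential contact of $\gamma_P$ itself, replacing it locally by the graphs of $\pm(x^4+\alpha)$ so that the curve is embedded for $\alpha>0$, and then uses the local well-posedness theory to show that the initial normal velocity drives the two branches across each other in finite time, as in \cite[Proposition 3.4]{Miura_Muller_Rupp_2025_optimal}. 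This dynamical step is exactly what your continuity argument tries to avoid, and it cannot be bypassed.
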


\begin{remark}
    The convergence part is also a new result, since such a convergence to a line was previously ensured only under the lower energy threshold $E[\gamma_0]<8$ \cite[Corollary 2.10]{miura2025newenergymethodshortening} (but for all $n\geq2$).
    In the planar case $n=2$, we expect the stronger property that if $E[\gamma_0]<\infty$ and $N[\gamma_0]=0$, where $N$ denotes the rotation number (see \Cref{subsec:rotation_number}), then the same convergence property holds as above.
    This may be regarded as a special case of our previous ``energy quantization'' conjecture $E[\gamma(t)]\to 8|N[\gamma_0]|$ for planar elastic flows \cite[Conjecture 7.18]{miura2025newenergymethodshortening}.
\end{remark}

\begin{remark}
    We expect that the optimal threshold for preserving embeddedness in $\R^n$ with $n\geq3$ is given by the standard borderline elastica $\gamma$ with $E[\gamma]=8$, which is different from the codimension-one case in \Cref{thm:EF_embedded}.
    A similar codimension-dependent transition is already observed in \cite{Miura_Muller_Rupp_2025_optimal} for closed curves. However, in the current situation the problem seems much more complicated.
    In fact, for closed curves, a key Li--Yau type inequality is available in all codimensions \cite{MR4631455}, but for infinite-length curves this is not the case in codimension two or higher (see \Cref{thm:borderline_minimality} in codimension one).
    See also \Cref{conj:boderline_minimality_high_codim} and subsequent discussions.
\end{remark}

Secondly, we examine the preservation of graphicality over a coordinate axis. We say that a complete curve $\gamma\colon \R\to\R^n$ is \emph{graphical} (over the $e_1$-axis) if 
\[
\inf_\R\langle\partial_s\gamma,e_1\rangle>0.
\]
In this case, the optimal shape for the threshold is given by a planar curve $\gamma_S$ (see \Cref{def:elastic_serpent} and \Cref{fig:elastic_serpent}), which we call the \emph{elastic serpent}.

\begin{figure}[htbp]
    \centering
    \includegraphics[width=0.6\linewidth]{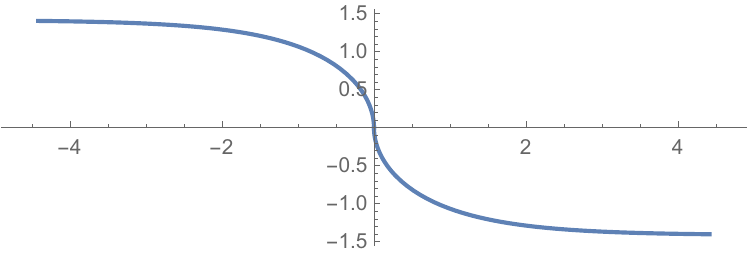}
    \caption{Elastic serpent $\gamma_S$: the optimal shape for graphicality-preservation (see \Cref{def:elastic_serpent})}
    \label{fig:elastic_serpent}
\end{figure}

\begin{theorem}\label{thm:EF_graphical}
    Let $n\geq2$ and $\gamma_0\in \dot{C}^\infty(\R;\R^n)$ with $\inf_{\R}|\partial_x\gamma_0|>0$.
    Suppose that $\gamma_0$ is graphical and
    \begin{equation}
        E[\gamma_0]\leq E[\gamma_S]=8-4\sqrt{2}.
    \end{equation}
    Then the elastic flow \eqref{eq:EF} starting from $\gamma_0$ remains graphical for all $t\geq0$.
    The energy threshold is optimal; for any $\varepsilon>0$ there exists a graphical initial curve $\gamma_0$ with $E[\gamma_0]<8-4\sqrt{2}+\varepsilon$ such that the flow is not graphical at some time $t_0>0$.
\end{theorem}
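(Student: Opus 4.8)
The plan is to reduce the statement to a sharp energy lower bound for curves whose unit tangent becomes orthogonal to $e_1$, and then to combine this bound with the energy monotonicity \eqref{eq:energy_decay} and the smoothness of the flow. The heart of the matter is the following claim: if $\gamma\in\dot C^\infty(\R;\R^n)$ has $E[\gamma]<\infty$ and there is a point $s^*$ with $\langle\partial_s\gamma(s^*),e_1\rangle\le 0$, then $E[\gamma]\ge 8-4\sqrt2$, with equality only for the (non-smooth) elastic serpent $\gamma_S$.

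To prove the claim, I first note that finite energy together with $\gamma\in\dot C^\infty$ (so that $|\kappa|=|\partial_s^2\gamma|$ is bounded) forces $\partial_s\gamma\to e_1$ as $|x|\to\infty$, since an $L^2$ function with bounded derivative must decay. Writing the unit tangent in spherical form $\partial_s\gamma=\cos\varphi\, e_1+\sin\varphi\,\omega$ with $\omega\perp e_1$, $|\omega|=1$, and polar angle $\varphi=\arccos\langle\partial_s\gamma,e_1\rangle\in[0,\pi]$, a direct computation gives $|\kappa|^2=(\partial_s\varphi)^2+\sin^2\varphi\,|\partial_s\omega|^2\ge(\partial_s\varphi)^2$, while $|\partial_s\gamma-e_1|^2=2(1-\cos\varphi)$. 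Hence
\[
E[\gamma]\ge\int_\R\Big(\tfrac12(\partial_s\varphi)^2+(1-\cos\varphi)\Big)\,ds,
\]
where $\varphi(\pm\infty)=0$ and, by the intermediate value theorem, $\varphi(s^*)=\pi/2$ for some $s^*$ (if $\langle\partial_s\gamma,e_1\rangle<0$ somewhere, then $\varphi$ exceeds $\pi/2$ and must cross it). This is exactly the planar pendulum functional. Using the pointwise inequality $\tfrac12(\partial_s\varphi)^2+(1-\cos\varphi)\ge 2\sin(\varphi/2)\,|\partial_s\varphi|$ and integrating the resulting total-variation lower bound on each side of $s^*$, I obtain
\[
E[\gamma]\ge 2\int_0^{\pi/2}2\sin(\psi/2)\,d\psi=8\big(1-\cos\tfrac\pi4\big)=8-4\sqrt2.
\]
Equality forces $\tfrac12(\partial_s\varphi)^2=1-\cos\varphi$ with $\varphi$ monotone up to a single maximum $\varphi=\pi/2$; in particular $|\partial_s\varphi(s^*)|=\sqrt2\ne0$, so the curvature jumps at $s^*$ and the minimizer is the non-smooth serpent $\gamma_S$, proving the claim.

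The preservation of graphicality then follows quickly. Suppose some $\gamma(t)$ were not graphical, i.e.\ $\inf_\R\langle\partial_s\gamma(t),e_1\rangle\le0$. Since $\partial_s\gamma(t)\to e_1$ at spatial infinity, this infimum is attained (or crossed) at a finite point, so the hypothesis of the claim holds and $E[\gamma(t)]\ge8-4\sqrt2$. On the other hand, \eqref{eq:energy_decay} and the assumption give $E[\gamma(t)]\le E[\gamma_0]\le E[\gamma_S]=8-4\sqrt2$. Thus equality holds in the claim, forcing $\gamma(t)$ to coincide (up to an isometry fixing $e_1$) with $\gamma_S$. But $\gamma_S\notin C^2$, whereas the flow keeps $\gamma(t)\in\dot C^\infty$, a contradiction. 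Hence $\gamma(t)$ is graphical for every $t\ge0$. I emphasize that this argument is uniform in the codimension $n\ge2$, since the reduction $|\kappa|^2\ge(\partial_s\varphi)^2$ holds in every dimension.

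For optimality I would exploit that $\gamma_S$ is an \emph{unstable} borderline configuration saturating the sharp bound. For each small $\varepsilon>0$ I plan to construct a smooth graphical initial curve $\gamma_0$ with $E[\gamma_0]<8-4\sqrt2+\varepsilon$ by rounding the corner of $\gamma_S$ and capping its tangent angle just below $\pi/2$, and then adding a small perturbation along an energy-decreasing direction pointing into the non-graphical region. Since the flow strictly decreases $E$ away from equilibria and the only finite-energy configuration on the graphicality threshold with energy $\le8-4\sqrt2$ is the non-smooth $\gamma_S$, the flow should be unable to remain on the graphical side while relaxing along this direction, so the maximal tangent angle must exceed $\pi/2$ at some finite time $t_0$. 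The main obstacle is precisely this last step: making rigorous the behavior of the flow near the non-smooth critical configuration $\gamma_S$ and verifying that the perturbation is genuinely carried across the threshold rather than being smoothed back into the graphical region. I expect this to require careful quantitative estimates near $\gamma_S$, in the spirit of the optimality arguments of \cite{Miura_Muller_Rupp_2025_optimal}.
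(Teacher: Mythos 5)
Your proof of the preservation half is correct but takes a genuinely different route from the paper. The paper splits the curve at a point $x_0$ with $\langle\partial_s\gamma(x_0),e_1\rangle\le 0$ and characterizes the minimizers of $E$ among semi-infinite arcs with prescribed initial angle by the direct method plus the classification of elasticae (\Cref{thm:initialangle_borderline_minimality}, \Cref{cor:serpent_general}); you instead reduce to a one-dimensional pendulum functional via the pointwise inequality $|\kappa|^2\ge(\partial_s\varphi)^2$ for the polar angle $\varphi=\arccos\langle\partial_s\gamma,e_1\rangle$ and then apply the Modica--Mortola/total-variation bound $\tfrac12(\partial_s\varphi)^2+(1-\cos\varphi)\ge 2\sin(\varphi/2)|\partial_s\varphi|$. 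This is more elementary and manifestly codimension-independent (the paper deploys the same substitution only in the planar \Cref{lem:lower_bound_of_E_by_N}), at the price of not yielding the classification of the minimizers $\gamma_b^\varphi$ for general angles, which the paper reuses for \Cref{thm:EF_embedded}. Two points to tidy: $\varphi$ and $\omega$ degenerate where $\sin\varphi=0$, so $|\partial_s\varphi|\le|\kappa|$ should be derived from $\partial_s\langle \partial_s\gamma,e_1\rangle=\langle\kappa,e_1-\langle\partial_s\gamma,e_1\rangle\partial_s\gamma\rangle$ as in \Cref{lem:finite_E_tangent_BV}; and your rigidity contradiction ($\gamma(t)$ would equal the non-$C^2$ curve $\gamma_S$) is fine, though the paper sidesteps the equality analysis by noting that unless the flow is stationary the energy is strictly below $8-4\sqrt2$ for $t>0$.

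The optimality half has a genuine gap, which you partly acknowledge. The step ``the flow strictly decreases $E$ \dots\ so the flow should be unable to remain on the graphical side'' does not follow: a rounded graphical perturbation of $\gamma_S$ with energy below the threshold could simply relax to a line while remaining graphical for all time, and energy monotonicity cannot exclude this. The missing mechanism in the paper (\Cref{lem:optimality_eta_alpha,lem:eta_alpha_breaks}) is a sign-definite first-order velocity computation at a smooth degenerate datum: writing $\gamma_S$ locally as a graph over the $e_2$-axis, one replaces it by a cut-off version of $\rho^2(x^5+\alpha x)$, obtaining curves $\eta_\alpha$ that are graphical for $\alpha>0$ and such that $\eta_0$ is smooth with exactly vertical tangent at the origin and $\nabla_s^k\kappa[\eta_0]=0$ there for $k=0,1,2$; consequently the $e_1$-component of the initial velocity of the tangent at that point equals $5!\,\rho^2>0$ regardless of the tangential term. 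Continuous dependence of the (analytic reformulation of the) flow on the initial datum then forces $\langle\partial_s\gamma_\alpha(t,\cdot),e_1\rangle<0$ somewhere at a fixed time $t>0$ for all small $\alpha>0$. In particular, the delicate object is not the non-smooth $\gamma_S$ itself but the smooth datum $\eta_0$, and no quantitative analysis near a ``non-smooth critical configuration'' is required once the velocity has a definite sign; without such a sign condition your proposed perturbation could well be smoothed back into the graphical region.
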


We emphasize that the graphicality-preservation result holds regardless of the codimension.
We also recall that in this case, since $E[\gamma_0]<8$, the convergence to a line is already ensured \cite[Corollary 2.10]{miura2025newenergymethodshortening}.

The proofs of \Cref{thm:EF_embedded} and \Cref{thm:EF_graphical} are essentially variational and the optimal shapes determining the thresholds arise as minimizers of suitable minimization problems for $E$. A challenge in explicitly finding these minimizers is that the constraints involved cause a loss of regularity (non-smoothness) of the optimal shapes, similar to \cite{Miura_Muller_Rupp_2025_optimal}.

We first prove \Cref{thm:EF_graphical} by minimizing $E$ among semi-infinite arcs with a given initial angle, see \Cref{thm:initialangle_borderline_minimality}, yielding part of the \emph{borderline elastica}, see \Cref{sec:borderline}. The proof here is inspired by \cite[Theorem 3.7]{miura2024uniqueness}. As a consequence, we conclude in \Cref{cor:graphical} that the elastic serpent realizes the unique minimizer of $E$ among complete non-graphical curves, proving the graphicality-preserving part of \Cref{thm:EF_graphical}. For the optimality in \Cref{thm:EF_graphical}, some new arguments are required as opposed to \cite{Miura_Muller_Rupp_2025_optimal}.
Inspired by an argument for the loss of graphicality for surface diffusion flows \cite{ElliottMaier-Paape2001}, we carefully construct and analyze a local perturbation of the elastic serpent and show that it leads to a loss of graphicality in finite time. Here the local well-posedness theory for the complete elastic flow in \cite{miura2025newenergymethodshortening} is crucially used.

For \Cref{thm:EF_embedded}, we identify the elastic pendant as the minimizer of $E$ among complete curves with zero rotation number having a tangential self-intersection, see \Cref{thm:elastic_pendant_minimality}. The constraints are motivated by the fact that \eqref{eq:EF} preserves the rotation number and, for stability reasons, the first self-intersection that occurs when starting from an embedded initial datum must be tangential. An important step of independent interest is the variational characterization of the \emph{teardrop elastica} introduced in \cite{Miura_Muller_Rupp_2025_optimal}, see \Cref{thm:teardrop_minimality}.
We also establish a key energy-comparison lemma, see \Cref{lem:06-08}, by using a geometric cut-and-paste argument.
The optimality of the threshold in \Cref{thm:EF_embedded} is then established as in \cite{Miura_Muller_Rupp_2025_optimal} (again using the local well-posedness \cite{miura2025newenergymethodshortening}), whereas the convergence follows from the asymptotic analysis in \cite{miura2025newenergymethodshortening}.

In addition to the above applications to the elastic flow, in \Cref{sec:Li-Yau}, we provide the minimal energy of self-intersecting complete planar curves without further conditions on the rotation number or the nature of the self-intersections, resulting in a \emph{Li--Yau type inequality} in the spirit of \cite{MR4565935}, see \Cref{thm:borderline_minimality}. Along the way, we precisely investigate the energy lower bound of self-intersecting curves in terms of the total curvature. We also discuss difficulties in extending this to higher codimension to obtain an analog to \cite{MR4631455}.

\begin{acknowledgements}
    The first author is supported by JSPS KAKENHI Grant Numbers JP23H00085, JP23K20802, and JP24K00532.
    The second author is funded in whole, or in part, by the Austrian Science Fund (FWF), grant number \href{https://doi.org/10.55776/ESP557}{10.55776/\linebreak ESP557}. Part of this work was done when the second author was visiting Kyoto University supported by a Mobility Fellowship of the Strategic Partnership Program between the University of Vienna and Kyoto University.
\end{acknowledgements}

\section{Preliminaries}

Throughout this paper we will use the fact that the Sobolev space $W^{m,p}(I)$ is embedded in $C^{m-1}(\bar{I})$ for any $m\in\N$, $p\geq1$, and any interval $I\subset\R$.

\subsection{Adapted elastic energy}
We first note the following relation between the energies for curves closing up continuously.
\begin{lemma}\label{lem:energy_C0-closed}
    Let $I=(a,b)\subset \R$ be a bounded interval and $\gamma\in W^{1,1}(I;\R^n)$ be such that $\operatorname{ess\,inf}_I|\partial_x\gamma|>0$ and $\gamma(a)=\gamma(b)$. Then $D[\gamma]=L[\gamma]$ and, if $\gamma\in W^{2,2}(I;\R^n)$, then $E[\gamma]=\hat E[\gamma]$.
\end{lemma}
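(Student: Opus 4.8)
The plan is to reduce the second claim to the first, and to prove the identity $D[\gamma]=L[\gamma]$ by recognizing the cross term in $D$ as a null Lagrangian that integrates to a boundary term vanishing by the closing-up condition. First I would expand the integrand of $D$. Since $\partial_s\gamma=|\partial_x\gamma|^{-1}\partial_x\gamma$ and $\operatorname{ess\,inf}_I|\partial_x\gamma|>0$, the unit-speed identity $|\partial_s\gamma|^2=1$ holds almost everywhere, so that
\begin{align*}
    \tfrac12|\partial_s\gamma-e_1|^2 = \tfrac12\bigl(|\partial_s\gamma|^2 - 2\langle\partial_s\gamma,e_1\rangle + |e_1|^2\bigr) = 1-\langle\partial_s\gamma,e_1\rangle.
\end{align*}
Integrating against $ds$ then yields $D[\gamma]=L[\gamma]-\int_\gamma\langle\partial_s\gamma,e_1\rangle\,ds$.

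The second step is to show that the remaining term vanishes. Using $ds=|\partial_x\gamma|\,dx$ and $\partial_s=|\partial_x\gamma|^{-1}\partial_x$, I would rewrite
\begin{align*}
    \int_\gamma\langle\partial_s\gamma,e_1\rangle\,ds = \int_a^b\langle\partial_x\gamma,e_1\rangle\,dx = \int_a^b\partial_x\langle\gamma,e_1\rangle\,dx,
\end{align*}
where the weight $|\partial_x\gamma|$ cancels exactly, which is precisely why $\langle\partial_s\gamma,e_1\rangle\,ds$ behaves as a null Lagrangian. Since $\gamma\in W^{1,1}(I;\R^n)$, the scalar function $\langle\gamma,e_1\rangle$ lies in $W^{1,1}(I)\hookrightarrow C^0(\bar I)$ and is absolutely continuous, so the fundamental theorem of calculus applies and gives $\int_a^b\partial_x\langle\gamma,e_1\rangle\,dx=\langle\gamma(b),e_1\rangle-\langle\gamma(a),e_1\rangle$. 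By the closing-up hypothesis $\gamma(a)=\gamma(b)$ this difference vanishes, whence $D[\gamma]=L[\gamma]$.

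Finally, for the second claim I would simply add the bending energy to both sides. When $\gamma\in W^{2,2}(I;\R^n)$, the curvature $\kappa=\partial_s^2\gamma$ lies in $L^2$ and $B[\gamma]$ is well defined and finite; this extra regularity is exactly what is needed to make $B[\gamma]$ meaningful, which is why it enters only in the second assertion. Adding $B[\gamma]$ to the identity $D[\gamma]=L[\gamma]$ gives $E[\gamma]=B[\gamma]+D[\gamma]=B[\gamma]+L[\gamma]=\hat E[\gamma]$, as desired.

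I do not anticipate a genuine obstacle here; the only points requiring care are verifying that the unit-speed identity and the reparametrization are licensed by $\operatorname{ess\,inf}_I|\partial_x\gamma|>0$ (so that $|\partial_x\gamma|^{-1}$ is bounded and the change of variables is valid), and that the boundary evaluation of $\langle\gamma,e_1\rangle$ is meaningful, which follows from the Sobolev embedding $W^{1,1}(I)\hookrightarrow C^0(\bar I)$ recalled at the beginning of this section.
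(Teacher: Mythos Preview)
Your proof is correct and follows essentially the same approach as the paper: expand the integrand of $D$ using $|\partial_s\gamma|=1$ to obtain $D[\gamma]=\int_I(1-\langle\partial_s\gamma,e_1\rangle)\,ds=L[\gamma]-\langle\gamma,e_1\rangle\big|_a^b$, and then use $\gamma(a)=\gamma(b)$. Your version is simply more detailed in justifying the Sobolev embedding and the absolute continuity needed for the boundary evaluation.
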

\begin{proof}
    This follows from integration by parts, noting that
    \begin{align}
        D[\gamma] = \int_I(1-\langle \partial_s\gamma, e_1\rangle) ds = L[\gamma] - \langle \gamma, e_1\rangle \vert_a^b. &\qedhere
    \end{align}
\end{proof}

We next prove the horizontality of the ends under the finiteness of the adapted elastic energy $E$, which was previously shown under the finiteness of $D$ and $\dot{C}^{1,1}$ regularity, see \cite[Section 3]{miura2025newenergymethodshortening}.

\begin{proposition}\label{prop:finite_E_horizontal_end}
    Let $\gamma\in W^{2,2}_{loc}(\R;\R^{n})$ with $\inf_\R|\partial_x\gamma|>0$ and $E[\gamma]<\infty$.
    Then $\lim_{x\to\pm\infty}\partial_s\gamma(x)=e_1$.
\end{proposition}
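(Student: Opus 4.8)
The plan is to reduce to an arclength parametrization and then study the scalar function $f(s) \vcentcolon= |\partial_s\gamma(s) - e_1|^2$ directly. Since $\inf_\R|\partial_x\gamma| > 0$ and $\gamma \in W^{2,2}_{loc}$, after reparametrizing by arclength (which preserves both properties) I may assume $\gamma$ is parametrized by arclength on $\R$, so that $T \vcentcolon= \partial_s\gamma$ is the unit tangent with $T \in W^{1,2}_{loc}(\R;\R^n) \hookrightarrow C^0(\R;\R^n)$ and $\kappa = \partial_s T \in L^2_{loc}$. The crucial point is that the single hypothesis $E[\gamma] < \infty$ encodes two separate integrability facts: from $D[\gamma] < \infty$ one gets $f \in L^1(\R)$, and from $B[\gamma] < \infty$ one gets $\kappa \in L^2(\R)$. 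The assertion $\lim_{s\to\pm\infty}T(s) = e_1$ is exactly the statement $\lim_{s\to\pm\infty}f(s) = 0$.

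For the key steps, I would first observe that $f$ is locally absolutely continuous with
\[
    f'(s) = 2\langle T(s) - e_1,\, \kappa(s)\rangle \quad\text{for a.e. } s,
\]
using $|T|\equiv 1$, so that $\partial_s T = \kappa$. By Cauchy--Schwarz,
\[
    \int_\R |f'|\, ds \leq 2\int_\R |T - e_1|\,|\kappa|\, ds \leq 2\Big(\int_\R|T-e_1|^2\, ds\Big)^{1/2}\Big(\int_\R|\kappa|^2\, ds\Big)^{1/2} = 4\sqrt{B[\gamma]\,D[\gamma]} < \infty,
\]
so $f' \in L^1(\R)$. Consequently $f$ admits well-defined limits $\ell_\pm \vcentcolon= \lim_{s\to\pm\infty}f(s)$, since $|f(s_2) - f(s_1)| \leq \int_{s_1}^{s_2}|f'|\, ds$ yields the Cauchy property as $s_1, s_2 \to \pm\infty$. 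Finally, because $f \geq 0$ and $f \in L^1(\R)$, each limit must vanish: if, say, $\ell_+ > 0$, then $f(s) \geq \ell_+/2$ for all sufficiently large $s$, forcing $\int^\infty f\, ds = \infty$, a contradiction. Hence $\ell_\pm = 0$, which is the claim.

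The conceptual heart, and the only genuine subtlety, is that neither integrability fact alone suffices: $f \in L^1$ only yields $\liminf_{s\to\infty} f(s) = 0$ along a subsequence (tall thin spikes could otherwise persist), while $f' \in L^1$ only guarantees that \emph{some} limit exists. It is precisely the combination, i.e.\ the full strength of $E = B + D < \infty$, that upgrades subsequential decay into a genuine limit equal to $0$; this also clarifies why finiteness of $D$ alone does not suffice in the present low-regularity setting, whereas adding $B$ closes the gap. The remaining care is routine bookkeeping: on each bounded interval $T - e_1$ is continuous and bounded while $\kappa \in L^2 \subset L^1$, so $f' \in L^1_{loc}$ and $f$ is absolutely continuous, legitimizing the fundamental theorem of calculus used above, and the arclength reparametrization should be recorded since it is what makes $\kappa = \partial_s T$ and the energy integrals meaningful.
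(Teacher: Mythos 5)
Your proposal is correct and follows essentially the same route as the paper: the paper's auxiliary Lemma~\ref{lem:finite_E_tangent_BV} bounds the total variation of $\langle\partial_s\gamma,e_1\rangle$ (which equals $1-\tfrac12 f$ in your notation) by $E[\gamma]$ via the pointwise Young inequality $\tfrac12|\partial_s^2\gamma|^2+\tfrac12|\partial_s\gamma-e_1|^2\geq|\partial_s^2\gamma|\,|\partial_s\gamma-e_1|$, where you use a global Cauchy--Schwarz giving $4\sqrt{B[\gamma]D[\gamma]}$ instead, and then both arguments conclude identically by combining the existence of limits with $D[\gamma]<\infty$ to force the limits to be $e_1$.
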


We begin with the following auxiliary lemma.

\begin{lemma}\label{lem:finite_E_tangent_BV}
    Let $I\subset\R$ be an interval and $\gamma\in W^{2,2}_{loc}(I;\R^{n})$ with $\inf_I
    |\partial_x\gamma|>0$ and $E[\gamma]<\infty$.
    Then the function $\langle\partial_s\gamma,e_1\rangle:I\to[-1,1]$ is of bounded variation.
\end{lemma}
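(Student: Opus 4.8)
The plan is to bound the total variation of $f\vcentcolon=\langle\partial_s\gamma,e_1\rangle$ by a single application of Cauchy--Schwarz, after extracting from the geometry a pointwise inequality with a square-integrable weight. First I would reparametrize by arclength; this is legitimate since $\inf_I|\partial_x\gamma|>0$, and total variation is invariant under a monotone reparametrization. Writing $T\vcentcolon=\partial_s\gamma$, the hypothesis $\gamma\in W^{2,2}_{loc}$ gives $T\in W^{1,2}_{loc}$, hence $f\in W^{1,2}_{loc}\subset W^{1,1}_{loc}$, so that the total variation of $f$ equals $\int_I|\partial_s f|\,ds$ and it suffices to show this integral is finite.

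The two inputs are the energy identities. From $|T|\equiv1$ one has $|T-e_1|^2=2(1-f)$, so that $\int_I(1-f)\,ds=D[\gamma]$, while $\int_I|\kappa|^2\,ds=2B[\gamma]$; both are finite because $E[\gamma]=B[\gamma]+D[\gamma]<\infty$. The key pointwise estimate comes from $\kappa\perp T$: decomposing $e_1=fT+e_1^\perp$ with $e_1^\perp\perp T$ and $|e_1^\perp|^2=1-f^2$ yields
\[
\partial_s f=\langle\kappa,e_1\rangle=\langle\kappa,e_1^\perp\rangle,\qquad\text{so}\qquad |\partial_s f|\le|\kappa|\sqrt{1-f^2}\le\sqrt2\,|\kappa|\sqrt{1-f},
\]
using $1-f^2=(1-f)(1+f)\le2(1-f)$. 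Integrating and applying Cauchy--Schwarz,
\[
\int_I|\partial_s f|\,ds\le\sqrt2\Big(\int_I|\kappa|^2\,ds\Big)^{1/2}\Big(\int_I(1-f)\,ds\Big)^{1/2}=2\sqrt{B[\gamma]\,D[\gamma]}\le E[\gamma]<\infty,
\]
which proves the claim.

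The only real obstacle is that the naive bound $\int_I|\partial_s f|\,ds\le\int_I|\kappa|\,ds$ is useless here: since the length is infinite, $\int_I|\kappa|\,ds$ need not be finite even though $\int_I|\kappa|^2\,ds$ is. The whole content of the proof is therefore the appearance of the extra weight $\sqrt{1-f}$, which upgrades $|\partial_s f|\le|\kappa|$ to a product integrable by Cauchy--Schwarz; this weight is exactly what the unit-speed constraint $|T|=1$ supplies through $|\langle\kappa,e_1\rangle|=|\langle\kappa,e_1^\perp\rangle|\le|\kappa|\sqrt{1-f^2}$, and its $L^2$-bound is precisely the direction energy $D$. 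I do not expect any serious difficulty beyond this observation; regularity is only needed to justify $TV(f)=\int_I|\partial_s f|\,ds$, which follows from $W^{2,2}_{loc}$ and the positive lower bound on $|\partial_x\gamma|$.
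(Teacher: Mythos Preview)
Your proof is correct and follows essentially the same approach as the paper: both rest on the pointwise bound $|\partial_s\langle T,e_1\rangle|\le|\kappa|\,|T-e_1|$ (your version $|\kappa|\sqrt{1-f^2}$ is in fact slightly sharper) and then control the total variation by $E[\gamma]$. The paper applies AM--GM pointwise and sums over partition points, whereas you integrate the pointwise bound and apply Cauchy--Schwarz in $L^2$, obtaining the marginally tighter constant $2\sqrt{B[\gamma]D[\gamma]}\le E[\gamma]$; the core idea is identical.
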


\begin{proof}
    For any $\{x_j\}_{j=1}^M\subset I$ with $x_1<x_2<\dots<x_M$, we have
    \begin{align}
        E[\gamma]&\geq 
         \sum_{j=1}^{M-1}\int_{x_j}^{x_{j+1}}\left(\frac{1}{2}|\partial_s^2\gamma|^2+\frac{1}{2}|\partial_s\gamma - e_1|^2\right) ds \\
        &\geq \sum_{j=1}^{M-1}\int_{x_j}^{x_{j+1}}|\partial_s^2\gamma||\partial_s\gamma - e_1| ds \\
        &\geq \sum_{j=1}^{M-1}\left| \int_{x_j}^{x_{j+1}}\partial_s\left(\frac{1}{2}|
        \partial_s\gamma - e_1|^2\right) ds \right| \\
        &= \sum_{j=1}^{M-1} |\langle\partial_s\gamma(x_{j+1}),e_1\rangle-\langle\partial_s\gamma(x_j),e_1\rangle|.
    \end{align}
    The arbitrariness of the partition $\{x_j\}$ completes the proof.
\end{proof}

\begin{proof}[Proof of \Cref{prop:finite_E_horizontal_end}]
    By \Cref{lem:finite_E_tangent_BV} we deduce that $\langle\partial_s\gamma,e_1\rangle$ converges as $x\to\pm\infty$.
    Since $D[\gamma]\leq E[\gamma]<\infty$, the limits must be $1$, i.e., $\partial_s\gamma\to e_1$ as $x\to\pm\infty$.
\end{proof}

\subsection{Rotation number}\label{subsec:rotation_number}

For a planar immersed curve $\gamma:I\to\R^2$, defined on an interval $I\subset\R$, possibly unbounded, we define the rotation number by
\[
N[\gamma]\vcentcolon= \frac{1}{2\pi}\int_I k ds,
\]
where $k$ denotes the signed curvature (such that a counterclockwise circle has positive curvature) and the integral is understood as an improper integral.
For a tangential angle function $\theta\colon I\to\R$ such that $\partial_s\gamma=(\cos \theta, \sin\theta)$, we have $k=\partial_s\theta$, so thanks to \Cref{prop:finite_E_horizontal_end}, the rotation number is well defined for all curves with finite adapted elastic energy.

\subsection{Borderline elastica}\label{sec:borderline}

We recall basic facts on the (planar) borderline elastica (see also \cite{Miura_elastica_survey}).
A prototypical arclength parametrization is given by
\begin{equation}\label{eq:borderline}
    \gamma_b(s)\vcentcolon=
    \begin{pmatrix}
        s-2\tanh{s}\\
        2\sech{s}
    \end{pmatrix}.
\end{equation}
The tangential angle of $\gamma_b$ is given by
$\theta_b(s)=4\arctan(e^{s})$, which increases from $0$ to $2\pi$ as $s$ varies from $-\infty$ to $\infty$.
The signed curvature is then given by
$k_b(s)=\partial_s\theta_b(s)=2\sech{s}=4e^{s}(1+e^{2s})^{-1}$. Note that $k_b\geq 0$.
This satisfies $k_{ss}+\frac{1}{2}k^3-k=0$.
In particular, we can explicitly compute:
\begin{align}
    E[\gamma_b] &=\int_\R \left( \frac{1}{2}k_b^2 + 1-\cos\theta_b \right) ds = \int_\R \left( \frac{1}{2} k_b^2 + \frac{8\tan^2\frac{\theta_b}{4}}{(1+\tan^2\frac{\theta_b}{4})^2} \right) ds \\
    &= \int_\R \frac{16e^{2s}}{(1+e^{2s})^2} ds = \left[ -\frac{8}{1+e^{2s}} \right]_{-\infty}^\infty = 8. \label{eq:energy_8_borderline}
\end{align}

\begin{figure}[htbp]
    \centering
    \includegraphics[width=0.6\linewidth]{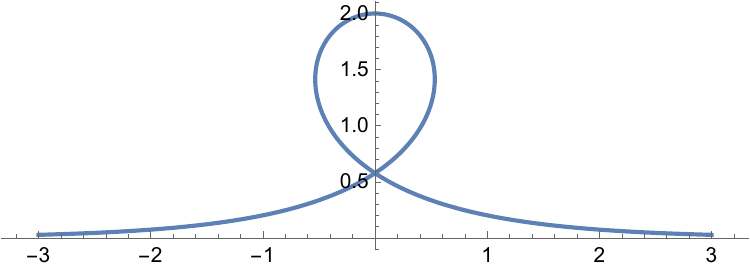}
    \caption{The borderline elastica, parametrized as in \eqref{eq:borderline}.}
    \label{fig:borderline}
\end{figure}

\section{Graphicality preservation}

The goal of this section is to prove the graphicality-preservation property of the elastic flow, \Cref{thm:EF_graphical}.
An important observation here is that a piece of the borderline elastica can be used to produce an optimal threshold for the energy $E$ below which all curves are graphical.

We define the borderline elastica with initial angle (cf.\ \cite{Miura20}) using the notation $\gamma_b,\theta_b,k_b$ introduced above. 
Given $\varphi\in(0,\pi]$, let $s_\varphi\in\R$ be the unique number such that $\theta_b(s_\varphi)+\varphi=2\pi$.
Clearly $s_\varphi\geq0$, and it is precisely given by
\begin{align}
    s_\varphi &= \log(e^{s_\varphi}) = \log\tan\frac{\theta_b(s_\varphi)}{4} = \log\tan\left(\frac{\pi}{2}-\frac{\varphi}{4}\right) = \log\cot\frac{\varphi}{4}.
\end{align}
Let $\gamma_b^\varphi:[0,\infty)\to\R^2$ denote the borderline elastica associated with $\varphi$ defined by
\[
\gamma_b^\varphi\vcentcolon=\gamma_b(s+s_\varphi)-\gamma_b(s_\varphi).
\]
This satisfies $\gamma_b^\varphi(0)=(0,0)$, $\partial_s\gamma_b^\varphi(0)=(\cos\varphi,-\sin\varphi)$, and $\partial_s\gamma_b^\varphi(s)\to e_1$ as $s\to\infty$, see \Cref{fig:borderline_angle}.
Also, computing as in \eqref{eq:energy_8_borderline} yields
\begin{equation}\label{eq:borderline_angle_energy}
    E[\gamma_b^\varphi] = \int_{s_\varphi}^\infty \frac{16e^{2s}}{(1+e^{2s})^2} ds = \frac{8}{1+e^{2s_\varphi}} = \frac{8}{1+\cot^2\frac{\varphi}{4}}=8\sin^2\frac{\varphi}{4}.
\end{equation}

\begin{figure}[htbp]
    \centering
    \includegraphics[width=0.5\linewidth]{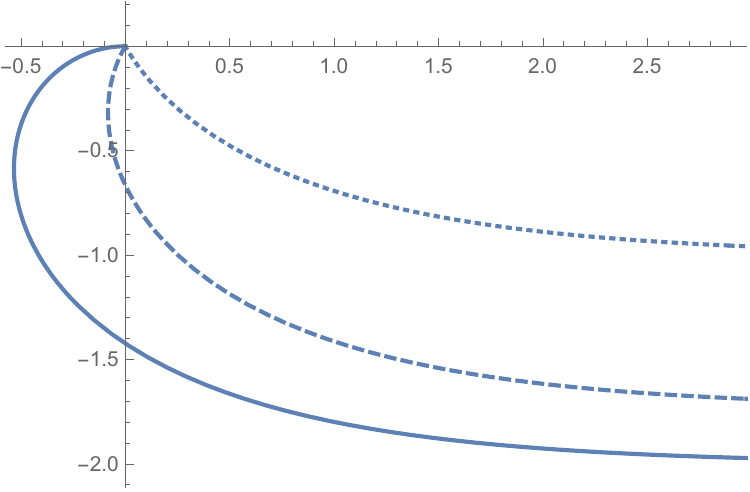}
    \caption{The borderline elastica $\gamma_b^\varphi$ with initial angle $\varphi\in \{\pi, \frac{2}{3}\pi, \frac{1}{3}\pi \}$.}
    \label{fig:borderline_angle}
\end{figure}

Now we obtain the key variational characterization of the above curves.
We call $\Phi:\R^n\to\R^n$ a \emph{direction-preserving isometry} (in the direction $e_1$) if there are a vector $b\in\R^n$ and an orthogonal matrix $A\in O(n)$ with $Ae_1=e_1$ such that $\Phi(x)=Ax+b$ for $x\in\R^n$.

\begin{theorem}\label{thm:initialangle_borderline_minimality}
    Let $\varphi\in(0,\pi]$.
    Let $\gamma:[0,\infty)\to\R^n$ be an immersion such that $\inf_\R|\partial_x\gamma|>0$ and $\gamma|_{[0,R]}\in W^{2,2}(0,R;\R^n)$ for any $R>0$.
    Suppose that 
    \[\langle\partial_s\gamma(0),e_1\rangle\leq\cos\varphi.\] 
    Then
    \[
    E[\gamma] \geq 8\sin^2\frac{\varphi}{4},
    \]
    where equality is attained if and only if $\gamma$ coincides with $\gamma_b^\varphi$ up to reparametrization and direction-preserving isometry.
\end{theorem}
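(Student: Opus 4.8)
The plan is to reduce the energy, via a single chain of pointwise inequalities, to the total variation of an explicit function of $w := \langle\partial_s\gamma,e_1\rangle$, and then to read off $\gamma_b^\varphi$ from the equality cases. I first record the identity $\frac12|\partial_s\gamma-e_1|^2 = 1-w$ and apply the elementary AM--GM inequality to the integrand of $E$, giving
\[
\tfrac12|\kappa|^2+\tfrac12|\partial_s\gamma-e_1|^2\geq|\kappa|\,|\partial_s\gamma-e_1|=|\kappa|\sqrt{2(1-w)}.
\]
The crucial algebraic step is then to bound $|\kappa|$ from below so as to produce a perfect $s$-derivative. Since $\langle\kappa,\partial_s\gamma\rangle=0$, one has $\partial_s w=\langle\kappa,e_1\rangle=\langle\kappa,e_1-w\,\partial_s\gamma\rangle$, and Cauchy--Schwarz together with $|e_1-w\,\partial_s\gamma|=\sqrt{1-w^2}$ yields $|\partial_s w|\leq|\kappa|\sqrt{1-w^2}$. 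Substituting, the integrand is bounded below by $\sqrt2\,|\partial_s w|/\sqrt{1+w}=2\sqrt2\,\bigl|\partial_s\sqrt{1+w}\,\bigr|$.

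Integrating over $[0,\infty)$, using that $w\to1$ as $s\to\infty$ (by \Cref{prop:finite_E_horizontal_end}, or directly from \Cref{lem:finite_E_tangent_BV} with $D[\gamma]<\infty$) and that total variation dominates the endpoint difference, I obtain $E[\gamma]\geq 2\sqrt2\bigl(\sqrt2-\sqrt{1+w(0)}\bigr)$. The hypothesis $w(0)\leq\cos\varphi$ gives $\sqrt{1+w(0)}\leq\sqrt{1+\cos\varphi}=\sqrt2\cos\frac\varphi2$, whence
\[
E[\gamma]\geq 4-4\cos\tfrac\varphi2=8\sin^2\tfrac\varphi4,
\]
as claimed. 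To stay rigorous near $w=-1$ I would work with the continuous function $\sqrt{1+w}$ on an exhaustion $[0,R]$, $R\to\infty$, rather than with $\arccos w$ directly.

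For the equality statement I trace back each inequality. AM--GM equality forces $|\kappa|=\sqrt{2(1-w)}$ a.e.; Cauchy--Schwarz equality forces $\kappa$ parallel to $e_1-w\,\partial_s\gamma$, i.e.\ $\kappa\in\Span\{e_1,\partial_s\gamma\}$ pointwise; and sharpness of the total-variation step forces $w$ monotone, hence (with $w\to1$) monotonically increasing, while sharpness of the last estimate forces $w(0)=\cos\varphi$. Monotonicity keeps $w$ bounded away from $-1$, legitimizing all manipulations. The key rigidity is that $\kappa\in\Span\{e_1,\partial_s\gamma\}$ confines the curve to a plane: with $W:=\Span\{e_1,\partial_s\gamma(0)\}$ and $f:=|P_{W^\perp}\partial_s\gamma|^2$, writing $\kappa=\lambda(e_1-w\,\partial_s\gamma)$ produces the linear ODE $f'=-2\lambda w f$ with $f(0)=0$; since the AM--GM equality forces $\lambda=(\cos\frac\theta2)^{-1}$ (with $\theta:=\arccos w\in[0,\pi]$) to be bounded along the monotone solution, Gr\"onwall gives $f\equiv0$. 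Thus $\gamma$ lies in a $2$-plane through $e_1$, which a direction-preserving isometry identifies with $\Span\{e_1,e_2\}$. There the AM--GM equality reduces to the first-order ODE $|\partial_s\theta|=2\sin\frac\theta2$ for the (now signed) tangential angle, whose monotone solution decreasing to $0$ from $\varphi$ is exactly $\gamma_b^\varphi$ up to reparametrization and a reflection across the $e_1$-axis (direction-preserving); this also upgrades $\gamma$ from $W^{2,2}_{loc}$ to the smooth explicit elastica.

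The main obstacle I anticipate is the equality analysis rather than the bound itself: finding the Cauchy--Schwarz step is the one genuinely clever move for the lower bound, but the rigidity argument needs care to exclude degeneracies, in particular the borderline case $\varphi=\pi$, where $\partial_s\gamma(0)=-e_1$ sits at the cut locus and $\lambda$ is unbounded at $s=0$. I would handle this by running the planarity argument on $[s_0,\infty)$ for small $s_0>0$ and letting $s_0\to0$, the plane being already determined by $\partial_s\gamma$ for any small $s>0$.
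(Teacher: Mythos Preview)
Your proof is correct and takes a genuinely different route from the paper's. The paper proceeds by the direct method: it extracts a minimizer in the admissible class via local $W^{2,2}$-compactness, observes that the minimizer must be an elastica, and then invokes the known classification of elasticae with finite $E$ (only lines and pieces of the borderline elastica survive) together with the boundary constraint and the explicit energy formula \eqref{eq:borderline_angle_energy} to pin down $\gamma_b^\varphi$. Your argument is instead a direct pointwise calculation: AM--GM on the integrand followed by the Cauchy--Schwarz step $|\partial_s w|\le|\kappa|\,|e_1-wT|=|\kappa|\sqrt{1-w^2}$ produces a total-variation lower bound for $\sqrt{1+w}$, and the equality analysis recovers the curve via a first-order ODE for the unit tangent. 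The Cauchy--Schwarz step is precisely the codimension-free substitute for the planar identity $|\kappa|=|\partial_s\theta|$, and in fact your argument may be read as the higher-codimension version of the paper's \Cref{lem:lower_bound_of_E_by_N}, sharpening the cruder use of the same trick in \Cref{lem:finite_E_tangent_BV}.

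What each approach buys: yours is self-contained and avoids both compactness and the classification of elasticae, yielding the rigidity directly from ODE uniqueness; the paper's is softer and transfers verbatim to related functionals once the critical-point classification is in hand. Two small points to tidy in your write-up: near $w=-1$ the chain rule for $\sqrt{1+w}$ is delicate, and the cleanest fix is to run your inequality with $G_\epsilon(w)=2\sqrt2\,\sqrt{1+w+\epsilon}$ (noting $|\partial_s w|\le|\kappa|\sqrt{(1-w)(1+w)}\le|\kappa|\sqrt{(1-w)(1+w+\epsilon)}$) and let $\epsilon\to0$; and in the $\varphi=\pi$ rigidity, the observation that $w\equiv-1$ on a nondegenerate interval forces $\kappa=0$ while AM--GM equality forces $|\kappa|=2$ shows $w(s)>-1$ for $s>0$, after which your planarity argument runs on $[s_0,\infty)$ with the $2$-plane independent of small $s_0$.
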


The proof is almost parallel to \cite[Theorem 3.7]{miura2024uniqueness}, which essentially corresponds to the case of $\varphi=\pi$ and $n=2$.
For the reader's convenience we sketch the argument.
    
\begin{proof}[Proof of \Cref{thm:initialangle_borderline_minimality}]
    Without loss of generality we may assume that all the curves under consideration are parametrized by arclength.
    Let $A_\varphi$ be the class of arclength parametrized curves $\gamma:[0,\infty)\to\R^n$ such that $\gamma|_{[0,R]}\in W^{2,2}(0,R;\R^n)$ for any $R>0$, and $\langle\partial_s\gamma(0),e_1\rangle\leq\cos\varphi$.
    
    We first show that $E$ has a minimizer in $A_\varphi$.
    Take a minimizing sequence $\{\gamma_j\}\subset A_\varphi$ for $E$.
    Up to translation we may assume $\gamma_j(0)=0$ for all $j$.
    Then $\{\gamma_j|_{[0,R]}\}_j$ is bounded in $W^{2,2}(0,R;\R^n)$ for any $R>0$.
    Hence, by a compactness and a diagonal argument, there are a subsequence of $\{\gamma_j\}$ (without relabeling) and some $\gamma_\infty:[0,\infty)\to\R^n$ such that $\gamma_\infty|_{[0,R]}\in W^{2,2}(0,R;\R^n)$ for any $R>0$, with the property that for any $R>0$ the restriction $\gamma_j|_{[0,R]}$ converges to $\gamma_\infty|_{[0,R]}$ in the $W^{2,2}$-weak and $C^1$-strong topology.
    Then $\gamma_\infty\in A_\varphi$ and, by lower semicontinuity, $\liminf_{j\to\infty}E[\gamma_j]\geq E[\gamma_\infty]$.
    Hence the limit curve $\gamma_\infty$ is a minimizer.

    By minimality, $\gamma_\infty$ must be an elastica (since the integrand of $L$ and of $D$ agree up to a null Lagrangian, see \cite{miura2024uniqueness} for details).
    By using the classification of elasticae (see, e.g., \cite{Singer_lecturenotes,Miura_elastica_survey}), the finiteness of $E$ implies that $\gamma_\infty$ is either a straight semi-line or a part of an isometric image of the borderline elastica $\gamma_b$.
    By $\gamma_\infty\in A_\varphi$ and by finiteness of the direction energy,
    \begin{align}
        \langle\partial_s\gamma_\infty(0),e_1\rangle\in[-1,\cos\varphi]\subset[-1,1), \qquad \lim_{s\to\infty}\partial_s\gamma_\infty(s)=e_1.
    \end{align}
    Hence clearly, no straight line is admissible.
    In addition, by symmetry of $\gamma_b$, any infinite-length piece of $\gamma_b$ containing the vertex of the loop has more energy than $\gamma_b^\pi$.
    Hence we find that, up to direction-preserving isometry, the curve $\gamma_\infty$ coincides with $\gamma_b^{\varphi'}$ for some $\varphi'\in[\varphi,\pi]$.
    Now from \eqref{eq:borderline_angle_energy} we infer that the case $\varphi'=\varphi$ is the only possibility where the desired minimal energy $8\sin^2\frac{\varphi}{4}$ is attained.
    The proof is complete.
\end{proof}

\begin{corollary}\label{cor:serpent_general}
    Let $\gamma\in W^{2,2}_{loc}(\R;\R^{n})$ with $\inf_\R|\partial_x\gamma|>0$.
    If there are some $x_0\in\R$ and $\varphi_0\in(0,\pi]$ such that $\langle \partial_s\gamma(x_0),e_1\rangle \leq \cos\varphi_0$, then
    \begin{equation}
        E[\gamma] \geq 16\sin^2\frac{\varphi_0}{4},
    \end{equation}
    where equality holds if and only if both $\gamma_1,\gamma_2:[0,\infty)\to\R^n$ defined by
    \begin{equation}
        \gamma_1(x)\vcentcolon=\gamma(x-x_0)|_{[x_0,\infty)}, \qquad \gamma_2(x)\vcentcolon=-\gamma(x_0-x)|_{[(-\infty,x_0]},
    \end{equation}
    coincide with $\gamma_b^{\varphi_0}$ up to reparametrization and direction-preserving isometry.
\end{corollary}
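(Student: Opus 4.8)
The plan is to reduce the two-sided statement to the semi-infinite result \Cref{thm:initialangle_borderline_minimality} by cutting $\gamma$ at the parameter $x_0$ and folding the two halves onto semi-lines emanating from the cut point. First I would note that we may assume $E[\gamma]<\infty$, since otherwise the inequality is trivial and equality is vacuous. Then I would work with the two curves $\gamma_1,\gamma_2:[0,\infty)\to\R^n$ from the statement: $\gamma_1$ is the forward half of $\gamma$ reparametrized to start at parameter $0$, and $\gamma_2(x)=-\gamma(x_0-x)$ is the point-reflected, orientation-reversed backward half. Each inherits $\inf|\partial_x\gamma_i|>0$ and $W^{2,2}$-regularity on every bounded interval from $\gamma\in W^{2,2}_{loc}(\R;\R^n)$. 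Crucially, both tangents at $0$ agree with $\partial_s\gamma(x_0)$, so the hypothesis $\langle\partial_s\gamma(x_0),e_1\rangle\leq\cos\varphi_0$ yields $\langle\partial_s\gamma_i(0),e_1\rangle\leq\cos\varphi_0$ for $i=1,2$; hence each $\gamma_i$ is admissible in \Cref{thm:initialangle_borderline_minimality} with $\varphi=\varphi_0$.

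The crux is the additivity $E[\gamma]=E[\gamma_1]+E[\gamma_2]$. For $\gamma_1$ this is immediate: translation preserves both curvature and the direction energy density, so $E[\gamma_1]=\int_{x_0}^\infty(\tfrac12|\kappa|^2+\tfrac12|\partial_s\gamma-e_1|^2)\,ds$. For $\gamma_2$ I would compute $\partial_s\gamma_2(x)=\partial_s\gamma(x_0-x)$: the point reflection $-\mathrm{id}$ flips the unit tangent while the reparametrization reversal $x\mapsto x_0-x$ flips it back, so the tangent is unchanged. Consequently $|\partial_s\gamma_2(x)-e_1|=|\partial_s\gamma(x_0-x)-e_1|$ and $|\kappa_{\gamma_2}(x)|=|\kappa_\gamma(x_0-x)|$, and the change of variables $u=x_0-x$ identifies $E[\gamma_2]$ with the backward-half energy $\int_{-\infty}^{x_0}(\tfrac12|\kappa|^2+\tfrac12|\partial_s\gamma-e_1|^2)\,ds$. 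Summing the two halves recovers $E[\gamma]$.

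With additivity established, applying \Cref{thm:initialangle_borderline_minimality} to each half gives $E[\gamma_i]\geq 8\sin^2\frac{\varphi_0}{4}$, and adding yields $E[\gamma]\geq 16\sin^2\frac{\varphi_0}{4}$. For the equality case, since both summands are at least $8\sin^2\frac{\varphi_0}{4}$ and sum to $16\sin^2\frac{\varphi_0}{4}$, equality in the corollary forces equality in \Cref{thm:initialangle_borderline_minimality} for $\gamma_1$ and $\gamma_2$ simultaneously. The equality clause there then gives that each $\gamma_i$ coincides with $\gamma_b^{\varphi_0}$ up to reparametrization and direction-preserving isometry, which is precisely the asserted characterization.

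The main obstacle I anticipate is the additivity identity, and specifically the verification that $\gamma_2$ preserves the \emph{direction} energy. A bare point reflection would send $|\partial_s\gamma-e_1|^2$ to $|\partial_s\gamma+e_1|^2$ and destroy the estimate, because $D$ is invariant only under direction-preserving isometries (those fixing $e_1$); it is the combination of the reflection with the orientation reversal that restores the correct sign of the tangent. Getting this bookkeeping right, together with confirming that the folded curve $\gamma_2$ genuinely satisfies the angle hypothesis of \Cref{thm:initialangle_borderline_minimality} at its initial point, is the only delicate point; everything else is a direct application of the semi-infinite result.
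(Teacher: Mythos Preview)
Your proposal is correct and follows essentially the same approach as the paper: split $\gamma$ at $x_0$, observe the additivity $E[\gamma]=E[\gamma_1]+E[\gamma_2]$, and apply \Cref{thm:initialangle_borderline_minimality} to each half. The paper states this in one line without spelling out the tangent bookkeeping for $\gamma_2$, so your explicit verification that the point reflection combined with orientation reversal restores the correct sign of $\partial_s\gamma_2$ (and hence preserves the direction-energy density) is exactly the detail that needs to be checked.
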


\begin{proof}
    It directly follows by observing that $E[\gamma]=E[\gamma_1]+E[\gamma_2]$ and applying \Cref{thm:initialangle_borderline_minimality} to both $\gamma_1$ and $\gamma_2$.
\end{proof}

The case $\varphi_0=\pi/2$ is particularly important in view of graphicality.

\begin{definition}\label{def:elastic_serpent}
    We define the elastic serpent $\gamma_S\in \dot{C}^{1,1}(\R;\R^2)$ by concatenating $\gamma_b^{\pi/2}$ with its reflection at the origin, i.e.,
    \begin{align}\label{eq:elastic_serpent}
        \gamma_S(s) \vcentcolon= \begin{cases}
        \gamma_b^{\pi/2}(s), &s\geq 0, \\
        -\gamma_b^{\pi/2}(-s), & s\leq 0,
    \end{cases}
    \end{align}
    see \Cref{fig:elastic_serpent}.
\end{definition}

The elastic serpent is parametrized by arclength, and nothing but the unique curve (up to invariances) attaining the equality case in \Cref{cor:serpent_general} with $\varphi_0=\frac{\pi}{2}$.
In particular,
\begin{equation}
    E[\gamma_S]=16\sin^2\frac{\pi}{8}=8-4\sqrt{2}.
\end{equation}
This energy provides an optimal threshold for graphicality.

\begin{corollary}\label{cor:graphical}
    Let $\gamma\in W^{2,2}_{loc}(\R;\R^{n})$ with $\inf_\R|\partial_x\gamma|>0$.
    If \[E[\gamma]<8-4\sqrt{2},\] then $\gamma$ is graphical.
\end{corollary}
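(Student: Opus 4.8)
The plan is to argue by contraposition: assuming that $\gamma$ is \emph{not} graphical, I will show $E[\gamma]\geq 8-4\sqrt{2}$, contradicting the hypothesis. By the definition of graphicality, failing to be graphical means that the function $f\vcentcolon=\langle\partial_s\gamma,e_1\rangle\colon\R\to[-1,1]$ satisfies $\inf_\R f\leq 0$. Note that $f$ is continuous, since $\gamma\in W^{2,2}_{loc}\hookrightarrow C^1_{loc}$.

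The first step is to produce an \emph{actual} point $x_0\in\R$ at which $f(x_0)\leq 0$. This is where I would invoke \Cref{prop:finite_E_horizontal_end}: since $E[\gamma]<8-4\sqrt{2}<\infty$, the proposition gives $\partial_s\gamma(x)\to e_1$ as $x\to\pm\infty$, so $f(x)\to 1$ at both ends. Combined with continuity, this forces $f$ to attain its infimum at some finite $x_0$; indeed, outside a sufficiently large compact interval $f$ is close to $1$, so the global infimum coincides with the minimum over that interval, which is attained by compactness. Hence $f(x_0)=\inf_\R f\leq 0$.

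With such a point in hand, the conclusion follows from \Cref{cor:serpent_general}. Choosing $\varphi_0=\pi/2$, the hypothesis $\langle\partial_s\gamma(x_0),e_1\rangle=f(x_0)\leq 0=\cos\frac{\pi}{2}$ is satisfied, so the corollary yields
\[
E[\gamma]\geq 16\sin^2\frac{\pi}{8}=8-4\sqrt{2},
\]
where the final identity uses $\sin^2\frac{\pi}{8}=\frac{1-\cos\frac{\pi}{4}}{2}=\frac{2-\sqrt{2}}{4}$. This contradicts $E[\gamma]<8-4\sqrt{2}$ and completes the argument.

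The only genuinely delicate point, and the step I would treat most carefully, is the attainment of the infimum in the borderline case $\inf_\R f=0$: a priori $f$ might remain strictly positive, so without further information one could only extract points with $f$ arbitrarily close to $0$ and would then have to pass to the limit $\varphi_0\to\frac{\pi}{2}^-$ in \Cref{cor:serpent_general}, relying on the continuity and monotonicity of $\varphi_0\mapsto 16\sin^2\frac{\varphi_0}{4}$. The asymptotic horizontality supplied by \Cref{prop:finite_E_horizontal_end} removes this complication entirely and reduces the whole statement to a single clean application of \Cref{cor:serpent_general} at the minimizing point $x_0$.
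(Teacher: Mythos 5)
Your proof is correct and follows essentially the same route as the paper: the contrapositive combined with \Cref{cor:serpent_general} at $\varphi_0=\pi/2$. The only (harmless) difference is that the paper avoids invoking \Cref{prop:finite_E_horizontal_end} by instead taking a sequence $\varphi_j\to\frac{\pi}{2}$ and passing to the limit in $16\sin^2\frac{\varphi_j}{4}$ --- exactly the alternative you describe in your final paragraph --- whereas you use the asymptotic horizontality to show the infimum of $\langle\partial_s\gamma,e_1\rangle$ is attained; both are valid.
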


\begin{proof}
    We prove the contrapositive.
    If $\gamma$ is not graphical, then there are sequences $\{x_j\}\subset\R$ and $\varphi_j\to\frac{\pi}{2}$ such that $\langle \partial_s\gamma(x_j),e_1\rangle \leq\cos\varphi_j$.
    Then by \Cref{cor:serpent_general} we deduce that $E[\gamma]\geq 16\sin^2\frac{\varphi_j}{4}\to 16\sin^2\frac{\pi}{8}=8-4\sqrt{2}$.
\end{proof}

The above corollary almost directly implies the graphicality-preserving part of \Cref{thm:EF_graphical}.
To prove its optimality, inspired by the approach in \cite{ElliottMaier-Paape2001}, we construct a perturbation of the elastic serpent $\gamma_S$ which loses graphicality in finite time under the elastic flow.

\begin{lemma}\label{lem:optimality_eta_alpha}
    Let $\varepsilon>0$. Then there exists a family of properly immersed smooth curves $\{\eta_\alpha\}_{\alpha\in[0,1]}\subset\dot{C}^\infty(\R;\R^2)$ such that
    \begin{enumerate}
        \item\label{item:opti_energy} $E[\eta_\alpha]\leq E[\gamma_S]+\varepsilon$ for all $\alpha\in[0,1]$;
        \item\label{item:opti_graph} $\inf_\R \langle \partial_s\eta_\alpha,e_1\rangle>0$ for all $\alpha\in(0,1]$;
        \item\label{item:opti_convergence} $\eta_\alpha\to\eta_0$ smoothly as $\alpha \searrow 0$;
        \item \label{item:opti_tangent_eta0} $\langle \partial_s\eta_0(0),e_1\rangle =0$;
        \item\label{item:opti_velocity} 
        $\langle (\nabla_s(\nabla_s^2\kappa+\frac{1}{2}|\kappa|^2\kappa-\kappa)-\xi\kappa)[\eta_0]\vert_{x=0},e_1\rangle>0$ for all smooth $\xi\colon\R\to\R$.
    \end{enumerate}
\end{lemma}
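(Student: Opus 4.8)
The plan is to realize $\eta_0$ as a smoothing of the elastic serpent $\gamma_S$ near the origin, and to obtain the graphical family $\{\eta_\alpha\}_{\alpha\in(0,1]}$ by a small, compactly supported upward tilt of the tangential angle. Throughout I parametrize by arclength and work with the tangential angle $\theta$, so that $\partial_s\gamma=(\cos\theta,\sin\theta)$, the signed curvature is $k=\partial_s\theta$, the unit normal is $N=(-\sin\theta,\cos\theta)$, and $\kappa=kN$. Recall that $\gamma_S$ is arclength-parametrized with $\partial_s\gamma_S(0)=(0,-1)$, tangential angle $\theta_S\geq 3\pi/2$ attaining its minimum $3\pi/2$ only at $s=0$ and tending to $2\pi$ as $s\to\pm\infty$, and odd signed curvature $k_S$ jumping from $-k_b(s_{\pi/2})$ to $+k_b(s_{\pi/2})$ across $s=0$. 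I fix a small $\delta>0$ and define $\eta_0$ by keeping $k_{\eta_0}=k_S$ for $|s|\geq\delta$ and replacing $k_S$ on $(-\delta,\delta)$ by a smooth odd profile with $k_{\eta_0}(0)=0$, $k_{\eta_0}\geq 0$ on $(0,\delta)$, and $\theta_{\eta_0}(0)=3\pi/2$. Since the modification lives on an interval of length $2\delta$ with bounded integrand, one gets $E[\eta_0]=E[\gamma_S]+O(\delta)$, and choosing $\delta$ small gives property \eqref{item:opti_energy} for $\eta_0$; property \eqref{item:opti_tangent_eta0} holds by construction since $\cos(3\pi/2)=0$. As $k_{\eta_0}$ is smooth with bounded derivatives and $\partial_s\eta_0$ is a unit vector, $\eta_0\in\dot C^\infty(\R;\R^2)$ is properly immersed.

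For the family I set $\theta_{\eta_\alpha}\vcentcolon=\theta_{\eta_0}+\alpha\beta$ with a fixed bump $\beta\in C_c^\infty((-\delta,\delta))$, $\beta>0$ on $(-\delta,\delta)$, of small amplitude, and let $\eta_\alpha(s)\vcentcolon=\int_0^s(\cos\theta_{\eta_\alpha},\sin\theta_{\eta_\alpha})$. Because $k_{\eta_0}\geq 0$ on $(0,\delta)$ and oddness gives $\theta_{\eta_0}-3\pi/2=\int_0^s k_{\eta_0}\geq 0$ with equality only at $s=0$, adding the strictly positive $\alpha\beta$ forces $\theta_{\eta_\alpha}>3\pi/2$ everywhere for $\alpha>0$; keeping $\beta$ small enough that $\theta_{\eta_\alpha}<5\pi/2$ then yields $\inf_\R\langle\partial_s\eta_\alpha,e_1\rangle=\inf_\R\cos\theta_{\eta_\alpha}>0$, i.e.\ \eqref{item:opti_graph}. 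Smallness of $\beta$ and $\delta$ keeps $E[\eta_\alpha]\leq E[\gamma_S]+\varepsilon$ for all $\alpha\in[0,1]$ by continuity in $\alpha$ on the compact interval, giving \eqref{item:opti_energy}, while $\alpha\beta\to 0$ in $C^\infty$ gives \eqref{item:opti_convergence}.

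The heart of the matter is \eqref{item:opti_velocity}. The first observation is structural: at $x=0$ the tangent $(0,-1)$ makes $N(0)=e_1$, so $\langle\kappa(0),e_1\rangle=k_{\eta_0}(0)\langle N(0),e_1\rangle=k_{\eta_0}(0)$. Since $-\xi\kappa$ contributes $-\xi(0)k_{\eta_0}(0)$ to the $e_1$-component, the requirement that \eqref{item:opti_velocity} hold \emph{for all} smooth $\xi$ forces $k_{\eta_0}(0)=0$ — which is precisely why I imposed this in the construction; with it, the tangential term drops out of \eqref{item:opti_velocity} identically. Using the codimension-one identities $\nabla_s^j\kappa=(\partial_s^j k)N$ and $\nabla_s N=0$, I compute
\[
\nabla_s\Big(\nabla_s^2\kappa+\tfrac12|\kappa|^2\kappa-\kappa\Big)=\Big(\partial_s^3 k+\tfrac32 k^2\partial_s k-\partial_s k\Big)N,
\]
so at $s=0$, where $k_{\eta_0}(0)=0$ and $\langle N(0),e_1\rangle=1$, condition \eqref{item:opti_velocity} reduces to the single scalar inequality $\partial_s^3 k_{\eta_0}(0)-\partial_s k_{\eta_0}(0)>0$.

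It remains to design the smoothing so that this holds while preserving everything above. Choosing the odd profile to behave like $c s^3$ near the origin for some $c>0$ (which also gives $k_{\eta_0}\geq 0$ on $(0,\delta)$) yields $\partial_s k_{\eta_0}(0)=0$ and $\partial_s^3 k_{\eta_0}(0)=6c>0$, hence \eqref{item:opti_velocity}; the resulting $\theta_{\eta_0}-3\pi/2\sim\frac{c}{4}s^4$ near $0$ is still nonnegative with a single zero at $s=0$, so the borderline/lift structure and all previous estimates are untouched, and the extra curvature contributes only $O(\delta)$ to the energy. I expect the main obstacle to be precisely this simultaneous balancing: securing the correct sign in \eqref{item:opti_velocity} through the local jet of $k$ at $0$ while keeping $\eta_0$ genuinely borderline (with $\min_\R\langle\partial_s\eta_0,e_1\rangle=0$ attained only at $x=0$) and the energy below $E[\gamma_S]+\varepsilon$. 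The conceptual key that makes this tractable is the realization that the orthogonality $\langle\kappa(0),e_1\rangle=0$ is simultaneously forced by the ``for all $\xi$'' clause and renders the tangential reparametrization irrelevant, collapsing \eqref{item:opti_velocity} to a one-dimensional condition on $k$ near $0$.
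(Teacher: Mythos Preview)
Your approach is conceptually parallel to the paper's --- smooth the elastic serpent near the origin so that the curvature vanishes to sufficiently high order (reducing property~(v) to a jet condition on $k$), then tilt for $\alpha>0$ to restore graphicality --- but you work in the tangential-angle picture whereas the paper uses a local graph representation over the $e_2$-axis, replacing the graph function $v$ by a cutoff interpolation with $u_\alpha(x)=x^5+\alpha x$.

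There is, however, a real gap in your energy estimate. Defining $\eta_0$ through its curvature $k_{\eta_0}$ and the initial condition $\theta_{\eta_0}(0)=3\pi/2$ does \emph{not} ensure that $\partial_s\eta_0\to e_1$ at infinity: this requires the matching condition $\int_0^\delta k_{\eta_0}=\int_0^\delta k_S$, which you never impose. Without it, $\theta_{\eta_0}(\pm\infty)=2\pi+\big(\int_0^\delta k_{\eta_0}-\int_0^\delta k_S\big)\neq 2\pi$, the tails of $\eta_0$ are \emph{rotated} copies of the borderline arcs, and $D[\eta_0]=\infty$. Thus your claim ``the modification lives on an interval of length $2\delta$ with bounded integrand, hence $E[\eta_0]=E[\gamma_S]+O(\delta)$'' is false as stated: altering the curvature on $(-\delta,\delta)$ changes the tangent direction \emph{globally} on $|s|>\delta$, not just the local energy contribution. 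The fix is straightforward --- add the integral constraint to the list of requirements on the odd profile, which is compatible with $k\sim cs^3$ near $0$ and with smooth matching to $k_S$ at $\pm\delta$ --- but it must be stated explicitly. The paper's graph-coordinate construction sidesteps this entirely: replacing $v$ by $w_\alpha$ on a compact interval leaves the curve \emph{pointwise} unchanged outside, so position and tangent automatically agree with $\gamma_S$ there and the $O(\rho)$ energy control is immediate from an $H^2$-bound on the perturbation.
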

\begin{proof}
As $\partial_s\gamma_S(0)=(0,-1)$, we may locally near the origin write $\gamma_S$ as a graph over the $e_2$-axis of a $C^1$-function $v\colon(-\rho_0,\rho_0)\to\R$, $0<\rho_0\leq 1$, which is odd, smooth away from the origin, and satisfies $v(0)=0, v'(0)=0$. In fact, we have $v\in W^{2,\infty}((-\rho_0,\rho_0))$ and 
\begin{align}\label{eq:v_bounds}
    |v(x)|\leq C x^2 \quad \text{ and } \quad 0< v'(x)\leq C |x| \quad \text{ for all }x\neq 0.
\end{align}
Moreover, by direct computation, we have
\begin{align}\label{eq:26-08}
    \partial_s^2 \gamma_b^{\pi/2}(0) = (\sqrt{2},0)
\end{align}
so that, possibly reducing $\rho_0>0$, there exists $\beta>0$ with
\begin{align}\label{eq:v_quadratic}
    v(x)\geq \beta x^2 \quad\text{ for all }x\geq 0.
\end{align}
Let $u_\alpha(x)\vcentcolon = x^5+\alpha x$ and let $\psi\in C_c^\infty(\R)$ be an even cutoff function with $\chi_{[-1/2,1/2]}\leq \psi\leq \chi_{(-1,1)}$ and $-\psi'(-x)=\psi'(x)\leq 0$ for $x\geq 0$. For $0\leq \rho< \min\{\rho_0,\beta/4\}$, we replace $v$ by the smooth function $w_\alpha$, $\alpha\in [0,1]$, given by
\begin{align}
    w_\alpha(x) \vcentcolon = \big(1-\psi(x/\rho)\big)v(x) + \rho^2\psi(x/\rho)u_\alpha(x),
\end{align}
so that $w_\alpha = v$ in $(-\rho_0,-\rho]\cup[\rho,\rho_0)$ and $w_\alpha = \rho^2 u_\alpha$ in $[-\rho/2,\rho/2]$. Similarly as in \cite[(3.4)]{Miura_Muller_Rupp_2025_optimal}, one can use \eqref{eq:v_bounds} to show that 
\begin{align}\label{eq:opti_H2_control}
    \Vert v-w_\alpha\Vert_{H^2(-\rho_0,\rho_0)}^2 \leq C\rho.
\end{align}

We now show that $w_\alpha'>0$ for $\alpha>0$. Given the properties of $v$ and $u_\alpha$ and using symmetry, it suffices to consider $x\in [\rho/2,\rho]$. Since $\rho\leq \rho_0\leq 1$, we have
\begin{align}
    \rho^2u_\alpha(x) = \rho^2(\alpha x + x^5) \leq 2\rho^2 x \leq 4\rho x^2 \leq \beta x^2\leq v(x),
\end{align}
by \eqref{eq:v_quadratic} using $\rho\leq \beta/4$. Moreover, $\psi'\leq 0$ on $[0,\infty)$, yields that 
\begin{align}
    w_\alpha'(x) = \big(1-\psi(x/\rho)\big)v'(x) + \rho^2\psi(x/\rho)u_\alpha'(x) + \big(\rho^2u_\alpha(x)-v(x)\big)\rho^{-1}\psi'(x/\rho) >0.
\end{align}
For $\rho>0$ sufficiently small and $\alpha\in [0,1]$, let $\eta_\alpha$ be the curve obtained by locally replacing $v$ by $w_\alpha$. Clearly \eqref{item:opti_convergence} and \eqref{item:opti_tangent_eta0} are satisfied and \eqref{item:opti_graph} follows from $w_\alpha'>0$. Since $E$ is continuous with respect to compactly supported $H^2$-perturbations, \eqref{item:opti_energy} is a consequence of \eqref{eq:opti_H2_control} for $\rho=\rho(\varepsilon)>0$ small enough, independent of $\alpha\in[0,1]$. Since $\partial_x w_0(0)=0$ for $x=1,\dots, 4$, by explicit computations in local coordinates, see, for instance, \cite[(A4)]{MR4278396}, we conclude $\nabla_s^k\kappa[\eta_0]\vert_{x=0} = 0$ for $k=0,1,2$, and thus 
\begin{align}
    \Big\langle \big(\nabla_s(\nabla_s^2\kappa +\frac{1}{2}|\kappa|^2\kappa -\kappa)-\xi\kappa\big)[\eta_0]\vert_{x=0},e_1\Big\rangle =  \partial_x^5 w_0(0)=   5!  \rho^2.&\qedhere
\end{align}
\end{proof}

Together with local well-posedness arguments, see \cite[Appendix A]{miura2025newenergymethodshortening}, we can now conclude the optimality in \Cref{thm:EF_graphical}.

\begin{lemma}\label{lem:eta_alpha_breaks}
    Let $\varepsilon>0$ and let $\eta_\alpha$ be as in \Cref{lem:optimality_eta_alpha}.
    For $\alpha>0$ small enough, the unique elastic flow $\gamma_\alpha$ with initial datum $\eta_\alpha$ becomes non-graphical in finite time, i.e., there exists $t\in(0,\infty)$ such that $\inf_\R \langle \partial_s\gamma_\alpha(t,\cdot),e_1\rangle <0$.
\end{lemma}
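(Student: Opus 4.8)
The plan is to track the horizontal component of the unit tangent at the fixed material point $x=0$, that is, to set
\[
\phi_\alpha(t):=\langle\partial_s\gamma_\alpha(t,0),e_1\rangle,
\]
and to show that for a suitable fixed time $t_0>0$ one has $\phi_\alpha(t_0)<0$ once $\alpha>0$ is small enough. Since $\inf_\R\langle\partial_s\gamma_\alpha(t_0,\cdot),e_1\rangle\le\phi_\alpha(t_0)$, this immediately yields the loss of graphicality. To compute the evolution of $\phi_\alpha$, I would decompose the flow, in the parametrization provided by the local well-posedness theory, into its normal and tangential parts, $\partial_t\gamma_\alpha=-V[\gamma_\alpha]+\lambda_\alpha\,\partial_s\gamma_\alpha$, where $V:=\nabla_s^2\kappa+\frac12|\kappa|^2\kappa-\kappa$ (so that the normal part reproduces \eqref{eq:EF}) and $\lambda_\alpha:=\langle\partial_t\gamma_\alpha,\partial_s\gamma_\alpha\rangle$ is a smooth tangential speed. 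Differentiating $\partial_s\gamma_\alpha=|\partial_x\gamma_\alpha|^{-1}\partial_x\gamma_\alpha$ in $t$ at fixed $x$ gives the standard identity $\partial_t(\partial_s\gamma_\alpha)=(\partial_s\partial_t\gamma_\alpha)^\perp=-\nabla_s V+\lambda_\alpha\kappa$, whence
\[
\partial_t\phi_\alpha=-\big\langle(\nabla_s V-\lambda_\alpha\kappa)[\gamma_\alpha],e_1\big\rangle\quad\text{at }x=0.
\]

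Next I would specialize to $\alpha=0$ and $t=0$. By property~(\ref{item:opti_tangent_eta0}) we have $\phi_0(0)=0$, while applying property~(\ref{item:opti_velocity}) with $\xi=\lambda_0(0,\cdot)$ yields
\[
\dot\phi_0(0)=-\big\langle(\nabla_s V-\lambda_0\kappa)[\eta_0]\vert_{x=0},e_1\big\rangle<0.
\]
Hence $\phi_0$ strictly decreases through the value $0$ at $t=0$, so there is a time $t_0>0$, within the existence interval of the flow emanating from $\eta_0$, with $\phi_0(t_0)=:-\delta<0$.

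Finally, I would invoke continuous dependence on initial data from the local well-posedness theory \cite[Appendix A]{miura2025newenergymethodshortening}, combined with the smooth convergence $\eta_\alpha\to\eta_0$ of property~(\ref{item:opti_convergence}). This should guarantee that the flows $\gamma_\alpha$ exist on the common interval $[0,t_0]$ for $\alpha$ small and converge there to $\gamma_0$ in a topology controlling the tangent vector at $x=0$. Thus $\phi_\alpha(t_0)\to\phi_0(t_0)=-\delta$, so $\phi_\alpha(t_0)<-\delta/2<0$ for all sufficiently small $\alpha>0$, which is the assertion.

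The main obstacle is this last step: extracting from the well-posedness theory both a uniform-in-$\alpha$ lower bound on the existence time (so that every $\gamma_\alpha$ reaches $t_0$) and a mode of convergence $\gamma_\alpha\to\gamma_0$ strong enough to pass to the limit in $\partial_s\gamma_\alpha(t_0,0)$. A subtlety woven throughout is that the tangential gauge $\lambda_\alpha$ is dictated by the parametrization used for well-posedness and is not known a priori; this is precisely why property~(\ref{item:opti_velocity}) must hold for \emph{every} smooth $\xi$, rendering the conclusion independent of the chosen parametrization. Indeed, since $\kappa[\eta_0]$ vanishes at $x=0$ (as established in the proof of \Cref{lem:optimality_eta_alpha}), the term $\lambda_0\kappa$ drops out there altogether, so the decisive sign comes entirely from $\langle\nabla_s V[\eta_0]\vert_{x=0},e_1\rangle>0$.
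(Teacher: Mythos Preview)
Your proposal is correct and follows essentially the same strategy as the paper: track the horizontal tangent component at the fixed spatial point $x=0$, compute its time-derivative at $t=0$ via property~(\ref{item:opti_velocity}), and pass from $\alpha=0$ to small $\alpha>0$ using continuous dependence on initial data from \cite[Appendix~A]{miura2025newenergymethodshortening}. The paper makes one point slightly more explicit than your write-up: it carries out the argument for the \emph{analytic} solution $\Gamma_\alpha$ (the one with the built-in tangential term $\xi$, for which the cited Theorems~A.2--A.3 directly supply the uniform existence time and the continuity of $(\alpha,t)\mapsto\partial_s\Gamma_\alpha(t,0)$), and only at the end transfers $\langle\partial_s\Gamma_\alpha(t_0,0),e_1\rangle<0$ to the actual elastic flow $\gamma_\alpha$ via the reparametrization-invariance of $\inf_\R\langle\partial_s\gamma_\alpha,e_1\rangle$.
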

\begin{proof}
    Recall from \cite[Appendix A]{miura2025newenergymethodshortening} that the unique elastic flow with initial datum $\gamma_0$ is given as a time-dependent reparametrization $\gamma(t,\Phi_t)$ of the unique solution $\gamma$ to the so-called analytic problem 
    \begin{align}\label{eq:STE_IVP_analytic}
        \left\lbrace\begin{array}{ll}
             \partial_t \gamma &= -\nabla_s^2\kappa-\frac{1}{2}|\kappa|^2\kappa+\kappa + \xi[\gamma] \partial_s\gamma\\
             \gamma(0) &= \gamma_0,
        \end{array}\right.
    \end{align}
    where the tangential velocity $\xi[\gamma]$ makes the equation parabolic. Let $T>0$ be as in \cite[Theorem A.2]{miura2025newenergymethodshortening} for \eqref{eq:STE_IVP_analytic} with initial datum $\eta_0$. 
    By \cite[Theorem A.2]{miura2025newenergymethodshortening} and \Cref{lem:optimality_eta_alpha}~\eqref{item:opti_convergence}, the solution $\Gamma_\alpha$ to the initial value problem \eqref{eq:STE_IVP_analytic} corresponding to the elastic flow exists on $[0,T]$ for all $\alpha\in[0,\alpha_0]$ with sufficiently small $\alpha_0>0$. By \cite[Theorem A.3]{miura2025newenergymethodshortening}, there exists a time-dependent family of orientation-preserving diffeomorphisms $\Phi_t^\alpha\colon\R\to\R$ such that $\gamma_\alpha(t,x) = \Gamma_\alpha(t,\Phi_t^\alpha(x))$ for $t\in [0,T]$, $x\in\R$, and $\alpha\in[0,\alpha_0]$. By  \cite[Theorem A.2]{miura2025newenergymethodshortening}, the map $F\colon [0,\alpha_0]\times[0,T]\to\R$ defined by
    \begin{align}
        F(\alpha,t)\vcentcolon= \langle \partial_s \Gamma_\alpha(t,0),e_1\rangle
    \end{align}
    is continuous, where $\partial_s$ is with respect to $\Gamma_\alpha$. By \Cref{lem:optimality_eta_alpha}~\eqref{item:opti_tangent_eta0}, we have $F(0,0)=0$. Moreover, with $\xi\vcentcolon=\langle \partial_t\Gamma_0,\partial_s\Gamma_0\rangle$, equation \eqref{eq:STE_IVP_analytic}, standard geometric evolution formulae as in \cite[(2.5)]{Dziuk-Kuwert-Schatzle_2002}, and \Cref{lem:optimality_eta_alpha}~\eqref{item:opti_velocity} yield
    \begin{align}
        \partial_t F(0,0)&= \langle \partial_t \partial_s\Gamma_0(t,x),e_1\rangle\vert_{t=x=0} \\
        &= - \Big\langle \big(\nabla_s(\nabla_s^2\kappa +\frac{1}{2}|\kappa|^2\kappa -\kappa)-\xi\kappa\big)[\eta_0]\vert_{x=0},e_1\Big\rangle <0.
    \end{align}
    By continuity, there is $t\in (0,T)$ such that for all $\alpha>0$ small enough (depending on $t$), we have
    $F(\alpha,t)<0$. Now, since $\Phi_t^\alpha$ is an orientation-preserving diffeomorphism,
    \begin{align}
        \inf_\R\langle \partial_s\gamma_\alpha(t,\cdot),e_1\rangle = \inf_\R\langle \partial_s\Gamma_\alpha(t,\cdot),e_1\rangle <0. &\qedhere
    \end{align}
\end{proof}

\begin{proof}[Proof of \Cref{thm:EF_graphical}]
    By the energy decay \eqref{eq:energy_decay}, either the flow $\gamma$ is stationary or the energy satisfies $E[\gamma(t,\cdot)]<8-4\sqrt{2}$ for all $t>0$.
    The former case clearly preserves the graphicality (in fact the only possibility is a line) while in the latter case we deduce the desired graphicality-preservation property by \Cref{cor:graphical}.

    For the optimality of the energy threshold, for any $\varepsilon>0$, \Cref{lem:optimality_eta_alpha,lem:eta_alpha_breaks} yield a curve $\eta_\alpha$ with $E[\eta_\alpha]\leq 8-4\sqrt{2}+\varepsilon$ which is graphical, but the resulting elastic flow loses graphicality in finite time.
\end{proof}

\begin{remark}
    \Cref{cor:serpent_general} shows that the smallness of $E$ implies not only graphicality but also the global gradient smallness.
    However, zeroth order quantities cannot globally be controlled under any smallness of $E$.
    In fact, if we take $\gamma_j$ as the graph of
    $
    u_j(x)\vcentcolon=(x^2+j)^{\alpha}\sin\log(x^2+j),
    $
    where $\alpha\in(0,\frac{1}{4})$, then
    $\lim_{j\to\infty}E[\gamma_j]=0$, see \cite[Lemma 3.2]{miura2025newenergymethodshortening},
    while each $u_j$ diverges with oscillations as $x\to\pm\infty$.
    In particular, the smallness assumption in \Cref{thm:EF_graphical} admits initial data of graphs of unbounded functions.
\end{remark}

\section{Planar embeddedness preservation}

In this section we prove the planar embeddedness-preservation property, \Cref{thm:EF_embedded}.
Here, not only a piece of the borderline elastica, but also the teardrop elastica introduced in \cite[Definition 2.15]{Miura_Muller_Rupp_2025_optimal} is importantly used to produce an optimal threshold.

We first give a key variational characterization of the teardrop elastica, using the minimality of the elastic two-teardrop obtained in \cite[Theorem 1.4]{Miura_Muller_Rupp_2025_optimal}.

To this end we briefly recall a prototypical parametrization of the teardrop elastica and of the elastic two-teardrop.
Here we use the same notation as in \cite[Appendix A]{Miura_elastica_survey}; in particular, we define the incomplete elliptic integral of first kind $F(x,m)$ and of second kind $E(x,m)$ by
\[
F(x,m)\vcentcolon=\int_0^x\frac{d\theta}{\sqrt{1-m\sin^2\theta}}, \qquad E(x,m)\vcentcolon=\int_0^x\sqrt{1-m\sin^2\theta}d\theta.
\]
Let $\gamma_w(\cdot,m):\R\to\R^2$ be a one-parameter family of wavelike elasticae, where $m\in(0,1)$, defined by
\begin{align}
    \gamma_w(x,m)\vcentcolon=
    \begin{pmatrix}
        2E(x,m)-F(x,m)\\
        -2\sqrt{m}\cos{x}
    \end{pmatrix}.
\end{align}
The signed curvature of $\gamma_w(\cdot,m)$ at $x$ is given by $2\sqrt{m}\cos{x}$.
By \cite[Proposition 2.13]{Miura_Muller_Rupp_2025_optimal} we can define $m_T\in(\frac{1}{2},1)$ as the unique root of the function
\begin{equation}
    m \mapsto \int_0^{\pi-\alpha(m)}\frac{1-2m\sin^2{\theta}}{\sqrt{1-m\sin^2\theta}}d\theta, \qquad \alpha(m)\vcentcolon=\arcsin{\sqrt{\frac{1}{2m}
    }}\in(0,\pi/2).
\end{equation}
Numerically, $m_T\approx0.731183$.
We then define a (non-arclength) parametrization of the teardrop elastica by
\begin{equation}\label{eq:teardrop_before_rescaling}
    \gamma_{T}\vcentcolon= \gamma_w(\cdot,m_T)|_{[-\pi+\alpha(m_T),\pi-\alpha(m_T)]}.
\end{equation}
Since the endpoints of $\gamma_{T}$ agree and the tangent directions are opposite there, we can define the elastic two-teardrop $\gamma_{2T}$ by concatenating $\gamma_{T}$ and its point reflection across the endpoints.
We finally recall the precise statement about the minimality of $\gamma_{2T}$ in terms of the scale-invariant energy $LB$.

\begin{theorem}[{\cite[Theorem 1.4]{Miura_Muller_Rupp_2025_optimal}}]\label{thm:two-teardrop}
    Let $\gamma\in W^{2,2}(\mathbf{S}^1;\R^2)$ be an immersed closed planar curve with unit (absolute) rotation number $|N[\gamma]|=1$.
    Then 
    \begin{equation}
        L[\gamma]B[\gamma] \geq L[\gamma_{2T}]B[\gamma_{2T}],
    \end{equation}
    where equality holds if and only if $\gamma$ coincides with $\gamma_{2T}$ up to reparametrization and similarity.
\end{theorem}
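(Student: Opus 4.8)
The functional $LB$ is scale invariant, so the plan is to normalize (say $L[\gamma]=1$) and minimize $B$. Observe first that the round circle realizes the unconstrained minimum $LB=2\pi^2$ in the class $|N|=1$, so the content of the statement lies entirely in the subclass of non-embedded curves, and this is the class in which I would work. Any such $\gamma$ has a self-intersection $p=\gamma(s_1)=\gamma(s_2)$, which decomposes it into two based loops $\gamma_1,\gamma_2$ meeting at $p$, with $L[\gamma]=L_1+L_2$ and $B[\gamma]=B_1+B_2$.

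The heart of the argument is a sharp \emph{single-loop inequality}: an immersed $W^{2,2}$ loop based at a point and carrying net turning $\Theta$ satisfies $\sqrt{L_iB_i}\ge c(\Theta)$ for a sharp constant $c(\Theta)$, with equality (at $\Theta=\pi$) precisely for a suitably rescaled and rotated piece of a wavelike elastica, namely the teardrop $\gamma_T$. I would prove this in the spirit of \Cref{thm:initialangle_borderline_minimality}: existence of a minimizer by the direct method, where a bound on $B$ together with the normalized length yields a $W^{2,2}$-bound and hence $C^1$-compactness on $\S^1$, so that the based-loop and turning constraints pass to the limit while $B$ is weakly lower semicontinuous; then an Euler--Lagrange analysis identifies the minimizer as a constrained elastica, i.e.\ a critical point of $B$ at fixed length, with natural boundary conditions at $p$, and the classification of planar elasticae singles out $\gamma_T$ as the only admissible closed-up loop.

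To recombine the loops I would use the elementary subadditivity
\[
\sqrt{L[\gamma]B[\gamma]}=\sqrt{(L_1+L_2)(B_1+B_2)}\ge \sqrt{L_1B_1}+\sqrt{L_2B_2},
\]
which follows from the cross-term estimate $L_1B_2+L_2B_1\ge 2\sqrt{L_1B_1L_2B_2}$, with equality iff $L_1/B_1=L_2/B_2$. Together with the single-loop bound this gives $\sqrt{L[\gamma]B[\gamma]}\ge c(\Theta_1)+c(\Theta_2)$, where $|N[\gamma]|=1$ and the matching of tangents at $p$ impose $\Theta_1+\Theta_2=2\pi$. A one-variable optimization over the crossing angle then selects antiparallel tangents, i.e.\ the symmetric split $\Theta_1=\Theta_2=\pi$ and two congruent teardrops; since $\gamma_{2T}$ is exactly this configuration, all inequalities are simultaneously sharp for it and $L[\gamma]B[\gamma]\ge L[\gamma_{2T}]B[\gamma_{2T}]$ follows.

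For the equality case I would propagate rigidity through each step: optimality of the split forces $\Theta_1=\Theta_2=\pi$, equality in the single-loop bound then forces each $\gamma_i$ to be a teardrop, equality in the cross-term estimate forces them to be similar, and the antiparallel (hence tangential) matching at $p$ forces the symmetric placement, yielding $\gamma_{2T}$ up to similarity and reparametrization. I expect the single-loop step to be the main obstacle: one must rule out degeneration of the minimizing loop---a shrinking loop drives $B\to\infty$ at fixed length, an effect that has to be quantified so that the self-intersection survives in the limit---and extract the precise teardrop, including its opening angle at $p$, from the free-boundary Euler--Lagrange conditions.
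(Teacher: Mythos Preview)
This theorem is not proved in the present paper; it is cited from \cite[Theorem~1.4]{Miura_Muller_Rupp_2025_optimal} and used as a black box. In fact the paper's logic runs opposite to yours: the single-loop teardrop inequality, \Cref{thm:teardrop_minimality_LB}---precisely the $\Theta=\pi$ case of your proposed ``single-loop inequality''---is \emph{derived from} \Cref{thm:two-teardrop} by doubling (concatenate the loop with its point reflection to obtain a closed curve with $|N|=1$ and a self-intersection, then invoke \Cref{thm:two-teardrop}). So within this paper your route would be circular.

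As an independent strategy the outline has two genuine gaps. First, the sharp bound $\sqrt{L_iB_i}\ge c(\Theta)$ for \emph{general} net turning $\Theta$ is not easier than the theorem itself: for each $\Theta$ one must pin down the correct arc of the correct wavelike elastica and rule out orbitlike or multi-period competitors, whereas you only identify the minimizer at $\Theta=\pi$. Second, even granting the function $c$, the claim that $c(\Theta_1)+c(2\pi-\Theta_1)$ is minimized at $\Theta_1=\pi$ needs an explicit formula for $c$ or a convexity property, neither of which you supply; and since each loop may itself wind, $\Theta_1$ ranges over all of $\R$, so the one-variable optimization is not over a compact interval. Incidentally, your opening remark is more than a convenience: since $LB=2\pi^2$ for the circle lies strictly below $L[\gamma_{2T}]B[\gamma_{2T}]$, the inequality fails for embedded curves, so the result necessarily carries a non-embeddedness hypothesis (and this is exactly how it is applied in the proof of \Cref{thm:teardrop_minimality_LB}).
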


Now we can formulate and prove the key variational property of the teardrop elastica.

\begin{theorem}\label{thm:teardrop_minimality_LB}
    Let $\gamma\in W^{2,2}(0,1;\R^2)$ be an immersion such that $\gamma(0)=\gamma(1)$ and $\partial_s\gamma(0)=-\partial_s\gamma(1)$.
    Then 
    \begin{equation}
        L[\gamma]B[\gamma] \geq L[\gamma_{T}]B[\gamma_{T}],
    \end{equation}
    where equality holds if and only if $\gamma$ coincides with $\gamma_{T}$ in \eqref{eq:teardrop_before_rescaling} up to reparametrization and similarity.
\end{theorem}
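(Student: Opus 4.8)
The plan is to reduce the open-arc statement of \Cref{thm:teardrop_minimality_LB} to the closed-curve minimality of the two-teardrop in \Cref{thm:two-teardrop} via a symmetric doubling construction. Given a competitor $\gamma\colon[0,1]\to\R^2$ with $\gamma(0)=\gamma(1)$ and $\partial_s\gamma(0)=-\partial_s\gamma(1)$, I would form the closed curve $\tilde\gamma$ by concatenating $\gamma$ with its point reflection across the common endpoint $\gamma(0)=\gamma(1)$, exactly mirroring the way $\gamma_{2T}$ is built from $\gamma_T$ in the excerpt. The endpoint-matching and antipodal-tangent conditions on $\gamma$ are precisely what guarantee that $\tilde\gamma$ closes up in a $W^{2,2}$ (indeed $C^1$) fashion: at the concatenation point the positions agree and the reflected copy reverses the tangent direction back into alignment, so no corner is introduced. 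Since point reflection is an isometry and doubles both length and bending energy, one gets $L[\tilde\gamma]=2L[\gamma]$ and $B[\tilde\gamma]=2B[\gamma]$, hence $L[\tilde\gamma]B[\tilde\gamma]=4\,L[\gamma]B[\gamma]$, and likewise $L[\gamma_{2T}]B[\gamma_{2T}]=4\,L[\gamma_T]B[\gamma_T]$.

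The second step is to verify that $\tilde\gamma$ has absolute rotation number one, so that \Cref{thm:two-teardrop} applies. This is the crucial input that makes the reduction work, and it is where I expect the main subtlety to lie. The antipodal tangent condition forces the total turning of $\gamma$ over $[0,1]$ to be an odd multiple of $\pi$; the point reflection contributes the complementary odd multiple, so the total turning of $\tilde\gamma$ is an even multiple of $\pi$, giving an integer rotation number. One must then argue that for the minimizing configuration this integer is $\pm1$ rather than a higher value—heuristically because extra turning only increases $LB$, so the optimal competitor realizes $|N[\tilde\gamma]|=1$. Care is needed here: a general competitor $\gamma$ need not produce $|N[\tilde\gamma]|=1$, so the cleanest route is to prove the inequality only against the relevant class and to note that any curve with higher absolute rotation number has strictly larger $LB$ by the corresponding bound, so it cannot be a minimizer. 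Applying \Cref{thm:two-teardrop} to $\tilde\gamma$ then yields
\begin{equation}
    4\,L[\gamma]B[\gamma]=L[\tilde\gamma]B[\tilde\gamma]\geq L[\gamma_{2T}]B[\gamma_{2T}]=4\,L[\gamma_T]B[\gamma_T],
\end{equation}
which is exactly the desired inequality after dividing by four.

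For the equality case, I would trace the rigidity through the construction. Equality in the displayed chain forces equality in \Cref{thm:two-teardrop}, so $\tilde\gamma$ must coincide with $\gamma_{2T}$ up to reparametrization and similarity. Because $\gamma$ is recovered from $\tilde\gamma$ by restricting to the half determined by the symmetry—and $\gamma_{2T}$ is by construction the symmetric double of $\gamma_T$—the corresponding half of $\gamma_{2T}$ is $\gamma_T$ itself. Here one must check that the point-reflection symmetry of $\gamma_{2T}$ is its \emph{unique} such symmetry, so that the decomposition of $\gamma_{2T}$ into two congruent teardrops is forced and $\gamma$ is pinned down as $\gamma_T$ up to reparametrization and similarity; this follows from the explicit parametrization in \eqref{eq:teardrop_before_rescaling} and the fact that the teardrop has a single loop with a unique self-intersection point. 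The main obstacle throughout is bookkeeping the rotation-number and symmetry constraints so that the reduction to the closed case is both valid and sharp; the energy doubling itself is routine.
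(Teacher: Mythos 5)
Your core mechanism --- doubling the arc by point reflection across the common endpoint and invoking \Cref{thm:two-teardrop} --- is exactly the one the paper uses, and your bookkeeping ($L[\tilde\gamma]B[\tilde\gamma]=4L[\gamma]B[\gamma]$, the parity argument showing $N[\tilde\gamma]$ is an odd integer, and the $C^1$ closing-up) is correct. The genuine gap is the case $|N[\tilde\gamma]|\geq 3$, which you flag but do not close. \Cref{thm:two-teardrop} covers only $|N|=1$, so for these competitors you need a separate lower bound, and ``the corresponding bound'' is never identified; moreover, since your argument is competitor-by-competitor and never establishes existence of a minimizer, it is not enough to say such a curve ``cannot be a minimizer'' --- you must actually prove $L[\gamma]B[\gamma]\geq L[\gamma_T]B[\gamma_T]$ for every competitor in that class. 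The gap is fillable: Cauchy--Schwarz gives $2L[\tilde\gamma]B[\tilde\gamma]\geq\big(\int|k|\,ds\big)^2\geq 4\pi^2 N[\tilde\gamma]^2$, so $|N[\tilde\gamma]|\geq3$ forces $L[\tilde\gamma]B[\tilde\gamma]\geq18\pi^2\approx 177.7$, which strictly exceeds $L[\gamma_{2T}]B[\gamma_{2T}]=4L[\gamma_T]B[\gamma_T]\approx 73.3$ --- but then you owe a rigorous upper bound on the right-hand side (e.g.\ via an explicit admissible competitor in the spirit of \Cref{rem:pendant<16}), which your proposal does not supply. The paper circumvents this numerical step entirely: it first produces a minimizer $\bar\gamma$ by the direct method, observes that any minimizer is a smooth elastica, and then uses the classification of planar elasticae together with the non-minimality of orbitlike elasticae with more than one curvature period (citing \cite{MR4739246}) to force $|N[\bar\gamma]|=\tfrac12$ before doubling.

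Two smaller points. First, your rigidity discussion is essentially right but should be made concrete: once $\tilde\gamma$ coincides with $\gamma_{2T}$ up to similarity, use the point symmetry $\gamma_{2T}(s+L/2)=2p-\gamma_{2T}(s)$ to see that any arclength-half with coinciding endpoints whose complement is its point reflection must start at a preimage of the symmetry center $p$, and that $\gamma_{2T}$ meets $p$ only at the two junction parameters; this pins the half down as $\gamma_T$ up to reparametrization and similarity. Second, \Cref{thm:two-teardrop} is really a statement about \emph{non-embedded} closed curves of rotation number $\pm1$ (a round circle has far smaller $LB$), so you should record that $\tilde\gamma$ automatically acquires a self-intersection at the doubling point --- the paper's own proof notes this explicitly before applying the theorem.
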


\begin{proof}
    It is sufficient to prove existence and uniqueness (up to invariances) of minimizers of $LB$ among admissible curves.
    Without loss of generality, we may take a minimizing sequence within the class of curves $\gamma$ that are parametrized by arclength, have length $1$, and satisfy $\gamma(0)=\gamma(1)=0$ and $\partial_s\gamma(0)=-\partial_s\gamma(1)=e_1$.
    Then, existence follows by a standard direct method and any minimizer $\bar{\gamma}$ is a smooth elastica, cf.\ \cite{Miura_elastica_survey}.
    It remains to show that $\bar{\gamma}$ coincides with $\gamma_{T}$ up to reparametrization and similarity.
    
    We prove that $|N[\bar{\gamma}]|=\frac{1}{2}$.
    By the boundary condition it is clear that $N[\bar{\gamma}]\in \Z+\frac{1}{2}$.
    Suppose on the contrary that $|N[\bar{\gamma}]|\geq\frac{3}{2}>1$. 
    Then by the well-known classification of planar elasticae (see \cite{Singer_lecturenotes,Miura_elastica_survey}) the only possibility is an orbitlike elastica with more than one period of the curvature, but this contradicts the minimality \cite[Theorem 2.1]{MR4739246}.
    
    Then we can create a closed $W^{2,2}$-curve $\hat{\gamma}$ such that $\hat{\gamma}$ has a self-intersection and $|N[\hat{\gamma}]|=1$, by concatenating $\bar{\gamma}$ and its point reflection across the endpoint.
    Then by \Cref{thm:two-teardrop} we have $L[\hat{\gamma}]B[\hat{\gamma}] \geq L[\gamma_{2T}]B[\gamma_{2T}]$ and, in fact, equality holds; indeed, if the inequality is strict, by symmetry it means that $L[\bar{\gamma}]B[\bar{\gamma}]>L[\gamma_{T}]B[\gamma_{T}]$, but this contradicts the minimality of $\bar{\gamma}$.
    Therefore, by uniqueness in the equality case of \Cref{thm:two-teardrop}, we deduce that $\hat{\gamma}$ coincides with $\gamma_{2T}$ up to reparametrization and similarity.
    This implies the desired uniqueness of $\bar{\gamma}$.
\end{proof}

For our purpose it is more convenient to reformulate the above result in terms of the additive energy $\hat{E}[\gamma]=B[\gamma]+L[\gamma]$.

Recall that, for $\lambda\in\R$, a curve $\gamma$ is called a \emph{$\lambda$-elastica} (in \cite{Miura_elastica_survey} for example) if it is a critical point of the energy $\int_\gamma(|\kappa|^2+\lambda)ds$, which is $2B+\lambda L$ in the current notation, yielding the Euler--Lagrange the equation $2\nabla_s^2\kappa+|\kappa|^2\kappa-\lambda\kappa=0$.
For a $\lambda$-elastica $\gamma_\lambda$ and a scaling factor $\Lambda>0$, we have $\hat{E}[\Lambda\gamma_\lambda]=\frac{1}{2\Lambda}\int_{\gamma_\lambda}(|\kappa|^2+2\Lambda^2)ds$.
Therefore, $\Lambda\gamma_\lambda$ is a critical point of $\hat{E}$ if and only if $\lambda>0$ and $\Lambda=\sqrt{\lambda/2}$.

The teardrop elastica $\gamma_{T}$ defined in \eqref{eq:teardrop_before_rescaling} is a $2(2m_T-1)$-elastica \cite[Remark 2.19]{Miura_elastica_survey}.
Hence, the curve $\Lambda\gamma_{T}$ with $\Lambda>0$ is a critical point of $\hat{E}$ if and only if $\Lambda=\sqrt{2m_T-1}$.
By the criticality and a scaling argument we have $B[\sqrt{2m_T-1}\gamma_{T}]=L[\sqrt{2m_T-1}\gamma_{T}]$.
Based on these observations, we now define
\begin{equation}\label{eq:teardrop_after_rescaling}
    \hat{\gamma}_{T}:\big[ 0,L[\hat \gamma]\big]\to\R^2
\end{equation}
by the arclength reparametrization of $\sqrt{2m_T-1}\gamma_{T}$.
Then $\hat{\gamma}_{T}$ is a critical point of $\hat{E}$, and thus satisfies
\begin{equation}\label{eq:B=L}
    B[\hat{\gamma}_{T}]=L[\hat{\gamma}_{T}].
\end{equation}

\begin{theorem}\label{thm:teardrop_minimality}
    Let $\gamma\in W^{2,2}(0,1;\R^2)$ be an immersion such that $\gamma(0)=\gamma(1)$ and $\partial_s\gamma(0)=-\partial_s\gamma(1)$.
    Then 
    \begin{equation}
        \hat{E}[\gamma] \geq \hat{E}[\hat{\gamma}_{T}],
    \end{equation}
    where equality holds if and only if $\gamma$ coincides with $\hat{\gamma}_{T}$ in \eqref{eq:teardrop_after_rescaling} up to reparametrization and isometry.
\end{theorem}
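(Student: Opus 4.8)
The plan is to reduce the additive statement to the scale-invariant inequality \Cref{thm:teardrop_minimality_LB} via the elementary arithmetic--geometric mean inequality, exploiting that the rescaled teardrop $\hat{\gamma}_{T}$ was constructed precisely so that $B[\hat{\gamma}_{T}]=L[\hat{\gamma}_{T}]$.

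First I would record, for every admissible $\gamma$, the bound
\[
\hat{E}[\gamma] = B[\gamma]+L[\gamma]\geq 2\sqrt{B[\gamma]\,L[\gamma]},
\]
with equality if and only if $B[\gamma]=L[\gamma]$. The product $L[\gamma]B[\gamma]$ is scale-invariant, and the boundary conditions $\gamma(0)=\gamma(1)$, $\partial_s\gamma(0)=-\partial_s\gamma(1)$ are preserved under reparametrization and similarity, so \Cref{thm:teardrop_minimality_LB} applies and gives $L[\gamma]B[\gamma]\geq L[\gamma_{T}]B[\gamma_{T}]$. Chaining the two estimates yields
\[
\hat{E}[\gamma]\geq 2\sqrt{L[\gamma_{T}]B[\gamma_{T}]}.
\]

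Next I would identify the right-hand side with $\hat{E}[\hat{\gamma}_{T}]$. Since $\hat{\gamma}_{T}$ is an arclength reparametrization of $\sqrt{2m_T-1}\,\gamma_{T}$, and arclength reparametrization changes neither $L$ nor $B$, the scale invariance of $LB$ gives $L[\hat{\gamma}_{T}]B[\hat{\gamma}_{T}]=L[\gamma_{T}]B[\gamma_{T}]$. Combined with \eqref{eq:B=L}, namely $B[\hat{\gamma}_{T}]=L[\hat{\gamma}_{T}]$, this forces $\sqrt{L[\gamma_{T}]B[\gamma_{T}]}=L[\hat{\gamma}_{T}]=B[\hat{\gamma}_{T}]$, whence $2\sqrt{L[\gamma_{T}]B[\gamma_{T}]}=\hat{E}[\hat{\gamma}_{T}]$ and the inequality $\hat{E}[\gamma]\geq\hat{E}[\hat{\gamma}_{T}]$ follows.

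For the equality case I would trace back through both bounds. Equality in the arithmetic--geometric mean step forces $B[\gamma]=L[\gamma]$, while equality in \Cref{thm:teardrop_minimality_LB} forces $\gamma=\Lambda\gamma_{T}$ up to reparametrization and isometry for some scaling factor $\Lambda>0$. Under this similarity one has $B[\gamma]=\Lambda^{-1}B[\gamma_{T}]$ and $L[\gamma]=\Lambda L[\gamma_{T}]$, so the constraint $B[\gamma]=L[\gamma]$ pins down $\Lambda^2=B[\gamma_{T}]/L[\gamma_{T}]$, which is exactly the value $\Lambda=\sqrt{2m_T-1}$ built into $\hat{\gamma}_{T}$. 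Thus the free similarity parameter is eliminated, and $\gamma$ coincides with $\hat{\gamma}_{T}$ up to reparametrization and isometry, as claimed. I expect the only delicate point to be this last piece of bookkeeping, confirming that the scaling freedom permitted by \Cref{thm:teardrop_minimality_LB} is correctly removed by the conformal normalization $B=L$; everything else is routine.
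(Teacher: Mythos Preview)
Your proof is correct and follows essentially the same route as the paper's: AM--GM to pass from $\hat{E}$ to $\sqrt{LB}$, then \Cref{thm:teardrop_minimality_LB}, then identification of $2\sqrt{L[\gamma_T]B[\gamma_T]}$ with $\hat{E}[\hat{\gamma}_T]$ via scale invariance and \eqref{eq:B=L}. Your treatment of the equality case, solving for $\Lambda$ directly from $B[\Lambda\gamma_T]=L[\Lambda\gamma_T]$, is a harmless reformulation of the paper's version, which instead checks that $B[\Lambda\hat{\gamma}_T]=L[\Lambda\hat{\gamma}_T]$ forces $\Lambda=1$.
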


\begin{proof}
    The Cauchy--Schwarz inequality and \Cref{thm:two-teardrop} yield
    \begin{align}\label{eq:0417-01}
        \hat{E}[\gamma]=B[\gamma] + L[\gamma] \geq 2\sqrt{L[\gamma]B[\gamma]} \geq 2\sqrt{L[\gamma_{T}]B[\gamma_{T}]}.
    \end{align}
    By \Cref{thm:two-teardrop}, the equality case in the last inequality occurs if and only if $\gamma$ coincides with $\gamma_{T}$ up to reparametrization and similarity.
    Moreover, in the first inequality, equality holds if and only if $B[\gamma]=L[\gamma]$.
    Noting that, thanks to \eqref{eq:B=L}, $B[\Lambda\hat{\gamma}_{T}]=\Lambda^{-1}B[\hat{\gamma}_{T}]$ and $L[\Lambda\hat{\gamma}_{T}]=\Lambda L[\gamma]$ coincide for $\Lambda>0$ if and only if $\Lambda=1$, we deduce that equality in \eqref{eq:0417-01} holds if and only if $\gamma$ coincides with $\hat{\gamma}_{T}$ up to reparametrization and isometry.
    In addition, the scaling-invariance of $LB$ and \eqref{eq:B=L} yield
    \begin{align}
        2\sqrt{L[\gamma_{T}]B[\gamma_{T}]} = 2\sqrt{L[\hat{\gamma}_{T}]B[\hat{\gamma}_{T}]} = \hat{E}[\hat{\gamma}_{T}].
    \end{align}
    This together with \eqref{eq:0417-01} completes the proof.
\end{proof}

\begin{remark}\label{rem:teardrop_minimality}
    Under the assumption of \Cref{thm:teardrop_minimality}, \Cref{lem:energy_C0-closed} also yields
    \begin{equation}
        E[\gamma]\geq E[\hat{\gamma}_{T}],
    \end{equation}
    and the same rigidity in the equality case.
    Numerically, $E[\hat{\gamma}_{T}]=\hat{E}[\hat{\gamma}_{T}]\approx 8.563436$.
    In \cite[p.34]{Miura_Muller_Rupp_2025_optimal} the numerical value of an energy of the elastic two-teardrop is written as $C_{2T}\approx146.628$, but the correct value seems $C_{2T}\approx146.664860$. In fact, using the current notation, we have $C_{2T}=2L[\gamma_{2T}]B[\gamma_{2T}]=8L[\gamma_{T}]B[\gamma_{T}]=2\hat{E}[\hat{\gamma}_{T}]^2$.
\end{remark}

Now we formulate and solve the key variational problem to produce an optimal threshold.
We say that a complete curve $\gamma\colon\R\to\R^n$ has a \emph{tangential self-intersection} if there are $x_1,x_2\in\R$ such that $\gamma(x_1)=\gamma(x_2)$ and $\partial_s\gamma(x_1)\in\{\partial_s\gamma(x_2),-\partial_s\gamma(x_2)\}$.

\begin{definition}\label{def:elastic_pendant}
    We define the \emph{elastic pendant} $\gamma_P\in \dot{C}^{1,1}(\R;\R^2)$ as the complete properly immersed planar curve given by 
    \begin{align}
        \gamma_P(s):=
        \begin{cases}
            -\gamma_b^{\pi/2}(-s), & s\leq0,\\
            \hat{\gamma}_T(s)-\hat{\gamma}_T(0), & 0\leq s \leq L[\hat{\gamma}_T], \\
            \begin{pmatrix}
                1 & 0 \\
                0 & -1\\
            \end{pmatrix} \gamma_b^{\pi/2}(s-L[\hat{\gamma}_T]), & s \geq L[\hat{\gamma}_T],
        \end{cases}
    \end{align}
    see \Cref{fig:elastic_pendant}.
    Note that $N[\gamma_P]=0$.
\end{definition}

\begin{remark}
    In fact, the curve $\gamma_P$ has $\dot{C}^{2,1}$-regularity. This can be verified by direction computations as follows.
    By our choice of the parametrization it is clear that the sign of the curvature agrees at the joined points, so we only need to compute the absolute curvature at the endpoint of $\gamma_b^{\pi/2}$ and of $\hat{\gamma}_{T}$.
    By \eqref{eq:26-08}, the curvature of $\gamma_b^{\pi/2}$ at $s=0$ is $\sqrt{2}$.
    On the other hand, the absolute curvature of $\gamma_{T}$ in \eqref{eq:teardrop_before_rescaling} at the endpoints is $|2\sqrt{m_T}\cos(\pi-\alpha(m_T))|=\sqrt{4m_T-2}$ (as already computed in the proof of \cite[Lemma 2.28]{Miura_Muller_Rupp_2025_optimal}).
    Hence, the absolute curvature of $\hat{\gamma}_{T}$, defined as a reparametrization of $\sqrt{2m_T-1}\gamma_{T}$, is also given by $\frac{\sqrt{4m_T-2}}{\sqrt{2m_T-1}}=\sqrt{2}$ at the endpoints. The Lipschitz continuity of the curvature through the gluing point follows from the $W^{3,\infty}$-regularity of $\gamma_{T}$, see \cite[Lemma 2.12]{Miura_Muller_Rupp_2025_optimal}, and the fact that, by \eqref{eq:borderline}, we clearly have $\gamma_b\in W^{3,\infty}(\R;\R^2)$.
\end{remark}

\begin{theorem}\label{thm:elastic_pendant_minimality}
    Let $\gamma\in W^{2,2}_{loc}(\R;\R^{2})$ with $\inf_\R|\partial_x\gamma|>0$.
    If $N[\gamma]=0$ and $\gamma$ has a tangential self-intersection, then
    \begin{align}
        E[\gamma]\geq E[\gamma_P],
    \end{align}
    where equality holds if and only if $\gamma$ coincides with the elastic pendant $\gamma_P$ up to orientation-preserving reparametrization and direction-preserving isometry.
\end{theorem}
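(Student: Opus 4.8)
The plan is to fix a tangential self-intersection, split $\gamma$ into a loop and two semi-infinite tails, and bound each piece by the borderline and teardrop minimality results already established. By \Cref{prop:finite_E_horizontal_end} and $N[\gamma]=0$ I may choose a continuous tangential angle $\theta$ with $\theta(\pm\infty)=0$, and I work with an arclength parametrization. Let $x_1<x_2$ realize the self-intersection, $\gamma(x_1)=\gamma(x_2)$, and decompose $E[\gamma]=E[A]+E[M]+E[C]$ into the left tail $A=\gamma|_{(-\infty,x_1]}$, the loop $M=\gamma|_{[x_1,x_2]}$, and the right tail $C=\gamma|_{[x_2,\infty)}$. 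Reparametrizing $C$ by $\sigma\mapsto\gamma(x_2+\sigma)$ and $A$ by the point-reflected reversal $\sigma\mapsto-\gamma(x_1-\sigma)$ (which preserves $E$ and sends the tangent at infinity to $e_1$, exactly the recipe in \Cref{cor:serpent_general}), \Cref{thm:initialangle_borderline_minimality} applies to both and yields $E[A]\geq 8\sin^2(\varphi_1/4)$ and $E[C]\geq 8\sin^2(\varphi_2/4)$, where $\varphi_i\vcentcolon=\arccos\langle\partial_s\gamma(x_i),e_1\rangle\in[0,\pi]$, with equality only for borderline pieces $\gamma_b^{\varphi_i}$.

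\emph{Main case} (opposite tangents $\partial_s\gamma(x_1)=-\partial_s\gamma(x_2)$): then $\cos\varphi_2=-\cos\varphi_1$, so $\varphi_1+\varphi_2=\pi$. The function $h(\varphi)\vcentcolon=8\sin^2(\varphi/4)$ satisfies $h'(\varphi)=2\sin(\varphi/2)$ and $h''(\varphi)=\cos(\varphi/2)>0$ on $[0,\pi)$, so $h(\varphi_1)+h(\pi-\varphi_1)\geq 2h(\pi/2)=8-4\sqrt2$ with equality iff $\varphi_1=\pi/2$. For the loop, $M$ reparametrized on $[0,1]$ is a $W^{2,2}$ immersion with $\gamma(0)=\gamma(1)$ and $\partial_s\gamma(0)=-\partial_s\gamma(1)$, so \Cref{rem:teardrop_minimality} gives $E[M]\geq E[\hat\gamma_T]$, with equality iff $M=\hat\gamma_T$. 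Summing, $E[\gamma]\geq(8-4\sqrt2)+E[\hat\gamma_T]=E[\gamma_P]$. In the equality case, $\varphi_1=\varphi_2=\pi/2$ forces the tails to be $\gamma_b^{\pi/2}$ and the loop to be $\hat\gamma_T$; the value $\theta_1=\pm\tfrac\pi2$ pins down a vertical tangent at the self-intersection, and matching the three pieces into one curve reconstructs $\gamma_P$ (the two signs giving $\gamma_P$ or its reflection across the $e_1$-axis, a direction-preserving isometry).

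\emph{Secondary case} (equal tangents $\partial_s\gamma(x_1)=\partial_s\gamma(x_2)$): here $A$ and $C$ already glue $C^1$ at the intersection, so I cut and paste $E[\gamma]=E[A\cup C]+E[M]$, where $A\cup C$ is a complete curve with $N[A\cup C]=-N[M]$ and $M$ is now a $C^1$-closed loop with $E[M]=\hat E[M]$ by \Cref{lem:energy_C0-closed}. If $N[M]\neq0$, the tangent of $A\cup C$ attains $-e_1$ somewhere (its angle sweeps $[0,-2\pi N[M]]$), so \Cref{cor:serpent_general} with $\varphi_0=\pi$ gives $E[A\cup C]\geq 16\sin^2(\tfrac\pi4)=8$, while $\int_M|k|\geq 2\pi$ and Cauchy--Schwarz give $\hat E[M]\geq 2\sqrt{L[M]B[M]}\geq 2\sqrt2\,\pi$; hence $E[\gamma]>E[\gamma_P]$. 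If $N[M]=0$, then $M$ is a self-intersecting closed curve with zero rotation number, and I bound its energy below by that of the closed teardrop-type configuration through the geometric cut-and-paste comparison of \Cref{lem:06-08}, again obtaining $E[\gamma]>E[\gamma_P]$. Thus equality is never attained in the equal-tangent case, and the overall inequality holds with the rigidity as claimed.

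The hard part will be the equal-tangent subcase $N[M]=0$. Attaching a closed zero-rotation loop to a straight line produces a curve satisfying all hypotheses with $E[\gamma]=\hat E[M]$, so the threshold $E[\gamma_P]$ can only hold if every closed $W^{2,2}$ curve with vanishing rotation number has energy at least $E[\gamma_P]$; this lower bound is genuinely quantitative and needs the cut-and-paste estimate together with a comparison to the two-teardrop/figure-eight minimizers, rather than a soft argument. The second delicate point is the rigidity: the extremal $\gamma_P$ is only $\dot C^{1,1}$ (indeed $\dot C^{2,1}$), so the equality discussion must track the endpoint tangents \emph{and} curvatures at the two gluing points and verify that the borderline tails and the teardrop loop assemble into $\gamma_P$ up to orientation-preserving reparametrization and direction-preserving isometry, exactly in the spirit of the non-smooth rigidity analysis of \cite{Miura_Muller_Rupp_2025_optimal}.
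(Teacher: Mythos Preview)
Your approach matches the paper's almost step for step: the same three-piece decomposition, the same tail bounds via \Cref{thm:initialangle_borderline_minimality}, the same loop bound via \Cref{thm:teardrop_minimality}/\Cref{rem:teardrop_minimality} in the opposite-tangent case, and in the equal-tangent case the same cut-and-paste into $A\cup C$ and $M$ with the same dichotomy on the rotation number of the loop.

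The one place you are too schematic is the subcase $N[M]=0$. You assert that $M$ is self-intersecting, which needs Hopf's Umlaufsatz (an embedded $C^1$-closed curve has $|N|=1$), and you then appeal directly to \Cref{lem:06-08}; but that lemma only compares $E[\gamma_8]$ and $E[\gamma_b]+E[\gamma_C]$ to $E[\gamma_P]$, not an arbitrary $N=0$ closed curve. The missing link is the closed Li--Yau inequality \cite[Theorem~1.2]{MR4565935}, which gives $E[M]\geq E[\gamma_8]$ once $M$ is known to be non-embedded; this is exactly the paper's route, and your last paragraph shows you sense it, but it should be named. Similarly, in the $N[M]\neq0$ subcase your bound $E[\gamma]\geq 8+2\sqrt2\,\pi$ is the borderline-plus-optimal-circle estimate, and the final strict comparison with $E[\gamma_P]$ is again the content of \Cref{lem:06-08} rather than something you can assert outright.
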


\begin{remark}\label{rem:pendant>8}
    It immediately follows that 
    \begin{align}E[\gamma_P]=E[\gamma_S]+E[\hat{\gamma}_{T}]=8-4\sqrt{2}+E[\hat{\gamma}_{T}].
    \end{align}
    Numerically, $E[\gamma_P]\approx 10.906581$.
    We can also rigorously verify that 
    $$E[\gamma_P]>E[\gamma_b]=8$$ 
    by a variational argument.
    By horizontally flipping the teardrop part of $\gamma_P$, we can create a curve $\hat{\gamma}_P$ with $E[\gamma_P]=E[\hat{\gamma}_P]$ and with a leftward tangent at the vertex of the teardrop.
    By \Cref{cor:serpent_general} we have $E[\hat{\gamma}_P]\geq8$, and equality is clearly not attained.
    Hence $E[\gamma_P]=E[\hat{\gamma}_P]>8$.
\end{remark}

\begin{proof}[Proof of \Cref{thm:elastic_pendant_minimality}]
    Suppose that $E[\gamma]<\infty$. By assumption there are $x_1,x_2\in\R$, $x_1<x_2$, such that $\gamma(x_1)=\gamma(x_2)$ and such that either $\partial_s\gamma(x_1)=\partial_s\gamma(x_2)$ (same direction) or $\partial_s\gamma(x_1)=-\partial_s\gamma(x_2)$ (opposite direction).
    We split the curve into three parts: $\gamma^1 \vcentcolon = \gamma \vert_{(-\infty,x_1]}$, $\gamma^2 \vcentcolon = \gamma\vert_{[x_1,x_2]}$, $\gamma^3\vcentcolon= \gamma\vert_{[x_2,\infty)}$ and consider these two cases separately.
    
    \emph{Case 1: Opposite direction.} Let $\varphi_i\in [0,\pi]$ denote the angle between $\partial_s\gamma(x_i)$ and $e_1$ for $i=1,2$. By assumption $\varphi_1 + \varphi_2 =\pi$. By \Cref{thm:initialangle_borderline_minimality}, we have 
    \begin{align}
        E[\gamma^1]+E[\gamma^3] \geq 8\sin^2 \frac{\varphi_1}{4} + 8 \sin^2 \frac{\varphi_2}{4}.
    \end{align}
    The unique minimizer over all $\varphi_1, \varphi_2\in[0,\pi]$ with $\varphi_1+\varphi_2=\pi$ is $\varphi_1=\varphi_2=\frac{\pi}{2}$. For $\gamma^2$, we may apply \Cref{thm:teardrop_minimality} together with \Cref{rem:teardrop_minimality} to conclude ${E}[\gamma^2] \geq {E}[\hat{\gamma}_{T}]$. 
    In total, we thus have
    \begin{align}
        E[\gamma] = E[\gamma^1]+E[\gamma^2]+E[\gamma^3]&\geq \left.8\sin^2 \frac{\varphi_1}{4} + 8 \sin^2 \frac{\varphi_2}{4}\right\vert_{\varphi_1=\varphi_2=\frac{\pi}{2}} + E[\hat{\gamma}_{T}] \\
        &= 8-4\sqrt 2 + E[\hat{\gamma}_{T}] = E[\gamma_P].
    \end{align}
    If equality is attained, then $\varphi_1=\varphi_2=\frac{\pi}{2}$ and, applying \Cref{thm:initialangle_borderline_minimality} to $x \mapsto -\gamma^1(x_1-x)$ and to $x\mapsto\gamma^3(x-x_2)$, we deduce that $\gamma^1$ coincides with $-\gamma_b^{\varphi_1}$ up to orientation-reversing reparametrization and direction-preserving isometry, and that $\gamma^3$ coincides with $\gamma_b^{\varphi_2}$ up to orientation-preserving reparametrization and direction-preserving isometry. Moreover, by \Cref{thm:teardrop_minimality}, $\gamma^2$ coincides with $\hat{\gamma}_{T}$ up to reparametrization and isometry. Finally, the assumption $N[\gamma]=0$ determines the way of parametrization. The statement then follows.
   
    \emph{Case 2: Same direction.} In this case, $\gamma^2$ is a $C^1$-closed curve and $\gamma^1$ and $\gamma^3$ can be joined to get a $C^1$-complete curve $\hat{\gamma}$. Clearly $N[\gamma] = N[\hat{\gamma}]+N[\gamma^2]$. Note that the latter is sum of integers (see for instance \cite[Definition A.3]{MR4565935} and \cite[Appendix B.2]{miura2025newenergymethodshortening}).
    We consider two cases, and in each case we prove that the minimum cannot be attained.
    
    Suppose first that $N[\hat{\gamma}]=0$.
    Then $N[\gamma^2]= 0$. By Hopf's Umlaufsatz \cite[Theorem A.5]{MR4565935}, $\gamma^2$ cannot be embedded, so by \cite[Theorem 1.2]{MR4565935} and \Cref{lem:energy_C0-closed}, we conclude that
    \begin{align}
        E[\gamma^2] \geq  E[\gamma_8],
    \end{align}
    where $\gamma_8$ is the figure-eight elastica of length $L[\gamma^2]=D[\gamma^2]$. By \Cref{lem:06-08} below, it follows that
    \begin{align}
        E[\gamma]\geq E[\gamma^2]\geq E[\gamma_8]> E[\gamma_P].
    \end{align}
    
    On the other hand, if $N[\hat{\gamma}]\neq 0$, then $\hat \gamma$ has a leftward tangent at some point. By \Cref{cor:serpent_general} with $\varphi_0=\pi$ and \eqref{eq:energy_8_borderline}, we have
    \begin{align}
        E[\hat\gamma]\geq 8 = E[\gamma_b].
    \end{align}
    Since circles are the global length-constrained minimizers of $B$, using \Cref{lem:energy_C0-closed}, we deduce that
    \begin{align}
        E[\gamma^2]\geq  E[\gamma_C],
    \end{align}
    where $\gamma_C$ is the circle of length $L[\gamma^2]=D[\gamma^2]$. Using \Cref{lem:06-08} below, we conclude
    \begin{align}
        E[\gamma] = E[\hat\gamma]+E[\gamma^2] \geq E[\gamma_b]+E[\gamma_C] > E[\gamma_P]. &\qedhere
    \end{align}
\end{proof}

\begin{lemma}\label{lem:06-08}
    For any figure-eight elastica $\gamma_8$ and any circle $\gamma_C$, the inequalities $E[\gamma_8]>E[\gamma_P]$ and $E[\gamma_b]+E[\gamma_C]> E[\gamma_P]$ hold.
\end{lemma}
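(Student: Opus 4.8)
The plan is to reduce both inequalities to the scale-invariant minima of $\hat E=B+L$ (equivalently $E$, via \Cref{lem:energy_C0-closed}, since $\gamma_8$ and $\gamma_C$ are closed) over all rescalings of the two model curves, and then to verify the resulting numerical comparisons against $E[\gamma_P]=8-4\sqrt2+E[\hat\gamma_T]\approx10.906581$. The mechanism is the same in both cases: for a closed curve one has $E[\gamma]=B[\gamma]+L[\gamma]\ge 2\sqrt{B[\gamma]L[\gamma]}$ by AM--GM, and the right-hand side is scale invariant, so it suffices to evaluate the scale-invariant product $LB$ for the figure-eight elastica and for the circle and compare $2\sqrt{LB}$ with $E[\gamma_P]$.

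For the circle this is immediate. A circle of radius $r$ has $L=2\pi r$ and $B=\pi/r$, hence $LB=2\pi^2$ and $E[\gamma_C]\ge 2\sqrt{2\pi^2}=2\sqrt2\,\pi$ for every circle $\gamma_C$. Combined with $E[\gamma_b]=8$ from \eqref{eq:energy_8_borderline}, this gives $E[\gamma_b]+E[\gamma_C]\ge 8+2\sqrt2\,\pi$, and since
\[
8+2\sqrt2\,\pi-E[\gamma_P]=2\sqrt2(\pi+2)-E[\hat\gamma_T]\approx 14.54-8.56>0
\]
(using $E[\hat\gamma_T]\approx 8.563436$ from \Cref{rem:teardrop_minimality}), the second inequality follows with a large margin.

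The figure-eight requires identifying the relevant elliptic-integral data. First I would recall that, up to similarity, the figure-eight elastica is the restriction of the wavelike elastica $\gamma_w(\cdot,m_8)$ to one period $[0,2\pi]$, where $m_8\in(0,1)$ is the unique parameter for which the curve closes up. Using $|\partial_x\gamma_w|=(1-m\sin^2 x)^{-1/2}$ and signed curvature $2\sqrt m\cos x$, the vertical component closes automatically while the horizontal displacement over one period equals $4\big(2\mathcal E(m)-\mathcal K(m)\big)$, where $\mathcal K(m)=F(\tfrac\pi2,m)$ and $\mathcal E(m)=E(\tfrac\pi2,m)$ are the complete elliptic integrals; hence the closure condition is $2\mathcal E(m_8)=\mathcal K(m_8)$. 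A direct computation over one period then gives
\[
L_8=4\mathcal K(m_8)=8\mathcal E(m_8),\qquad B_8=8\big[(m_8-1)\mathcal K(m_8)+\mathcal E(m_8)\big]=8(2m_8-1)\mathcal E(m_8),
\]
where the second equalities use $\mathcal K(m_8)=2\mathcal E(m_8)$, so that $E[\gamma_8]\ge 2\sqrt{L_8 B_8}=16\,\mathcal E(m_8)\sqrt{2m_8-1}$.

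It then remains to bound this quantity below and compare with $E[\gamma_P]$. I would locate $m_8$ via the intermediate value theorem applied to $m\mapsto 2\mathcal E(m)-\mathcal K(m)$, which is strictly decreasing (as $\mathcal E$ decreases and $\mathcal K$ increases) from $\tfrac\pi2$ to $-\infty$; checking $2\mathcal E(0.8)>\mathcal K(0.8)$ and $2\mathcal E(0.85)<\mathcal K(0.85)$ gives $m_8\in(0.8,0.85)$. Monotonicity of $\mathcal E$ then yields $\mathcal E(m_8)>\mathcal E(0.85)>1.15$ and $2m_8-1>0.6$, whence $E[\gamma_8]\ge 16\cdot1.15\cdot\sqrt{0.6}>14.2>10.91\approx E[\gamma_P]$, proving the first inequality, again with a comfortable margin. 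The only genuinely delicate point is making the elliptic-integral comparisons rigorous (via elementary monotone bounds or verified enclosures for $\mathcal E,\mathcal K$ at $m=0.8,0.85$ and for $E[\hat\gamma_T]$); since every margin above exceeds $3$, crude rigorous enclosures suffice and no sharp estimate is needed.
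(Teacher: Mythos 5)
Your strategy is sound and the computations for the figure-eight are essentially correct, but you have taken exactly the route the paper explicitly mentions and then sets aside: before the proof, the authors note that $E[\gamma_8]\geq 2\sqrt{B[\gamma_8]L[\gamma_8]}\approx 14.996$ and $E[\gamma_b]+E[\gamma_C]\geq 8+2\sqrt{2}\pi$, and then state that they prefer a ``rigorous and geometric proof based on a cut-and-paste procedure.'' Their actual argument cuts $\gamma_8$ (resp.\ $\gamma_C$ together with $\gamma_b$) and a line at suitable points, inserts horizontal segments (which cost no adapted energy), and produces a complete curve with a tangential self-intersection with tangents $\pm e_2$; the two non-compact arcs are then handled by \Cref{thm:initialangle_borderline_minimality} with $\varphi=\pi/2$ (giving at least $E[\gamma_S]$) and the compact cusped loop by \Cref{thm:teardrop_minimality} and \Cref{rem:teardrop_minimality} (giving strictly more than $E[\hat{\gamma}_T]$), so that $E[\gamma_8]>E[\gamma_S]+E[\hat{\gamma}_T]=E[\gamma_P]$ with no numerics at all. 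What the paper's route buys is that $E[\hat{\gamma}_T]$ and $E[\gamma_P]$ never need to be evaluated; what your route buys is explicit quantitative margins and independence from the cut-and-paste construction, at the price of verified elliptic-integral enclosures.

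Two concrete issues remain in your version. First, a minor numerical slip: $\mathcal{E}(0.85)\approx 1.143<1.15$, so the chain $\mathcal{E}(m_8)>\mathcal{E}(0.85)>1.15$ is false as written; replacing $1.15$ by $1.14$ still gives $16\cdot 1.14\cdot\sqrt{0.6}>14.1$, so the conclusion survives. Second, and more substantively, both of your final comparisons rest on the numerical value $E[\hat{\gamma}_T]\approx 8.563436$, which at this point in the paper is only an unverified approximation of a quantity defined through the implicitly determined parameter $m_T$; ``crude enclosures'' for it are not as routine as for $\mathcal{K}$ and $\mathcal{E}$ at fixed arguments. The clean fix is a competitor bound via the minimality in \Cref{thm:teardrop_minimality}: the paper later shows $\hat{E}[\hat{\gamma}_T]<7\sqrt{2}\pi/3\approx 10.37$ (see \Cref{rem:pendant<16}), whence $E[\gamma_P]<8-4\sqrt{2}+7\sqrt{2}\pi/3<12.8$, and both of your lower bounds ($>14.1$ and $8+2\sqrt{2}\pi>16.8$) comfortably clear this. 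With that ingredient supplied, your proof is complete and rigorous.
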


Of course, the energies of these explicit shapes may also be estimated numerically. Recall that $E[\gamma_P]\approx 10.906581$. On the other hand, using \eqref{eq:0417-01} and the scaling invariance of $BL$, we have for any figure-eight elastica
\begin{align}
    E[\gamma_8] \geq 2\sqrt{B[\gamma_8]L[\gamma_8]} \approx 14.995973.
\end{align}
Similarly, for any circle, we obtain $E[\gamma_C] \geq 2\sqrt{2}\pi$, yielding
\[
E[\gamma_b]+E[\gamma_C] \geq 8+2\sqrt{2}\pi \approx 16.885766.
\]
However, in the following, we present a rigorous and geometric proof based on a cut-and-paste procedure (see \Cref{fig:cut-and-paste}) using the characterizations of $\gamma_b$ and $\hat{\gamma}_{T}$ as minimizers, cf.\ \Cref{thm:initialangle_borderline_minimality,thm:teardrop_minimality}.

\begin{figure}[htbp]
    \centering
    \includegraphics[width=0.9\linewidth]{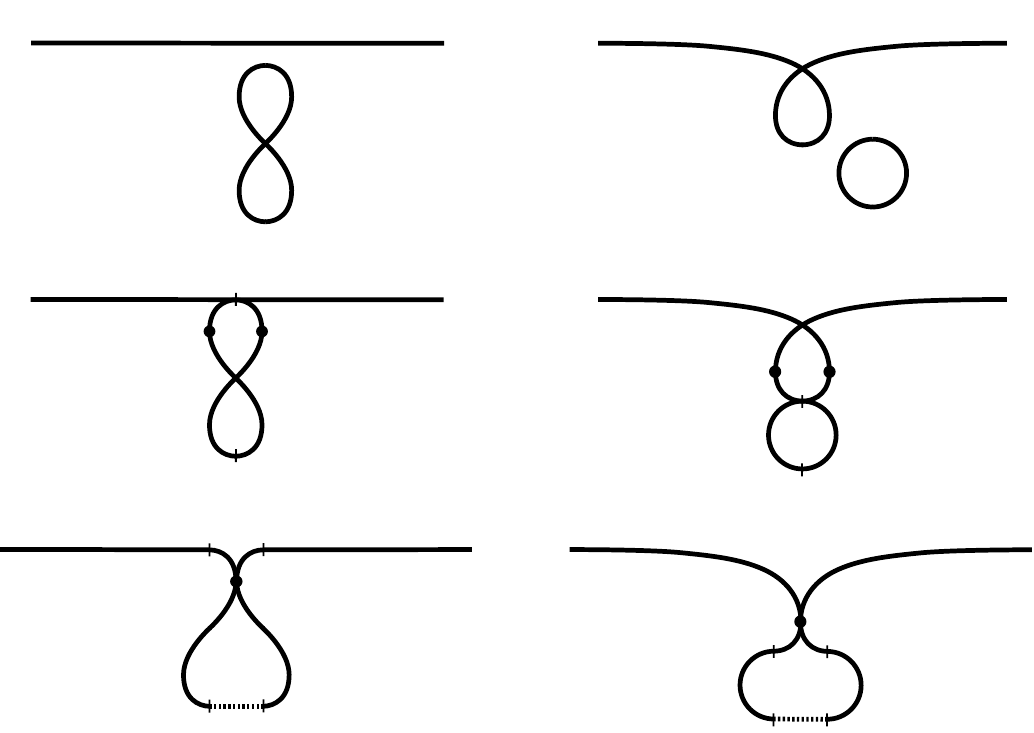}
    \caption{Cut-and-paste procedure (top to bottom). The vertical bars correspond to cutting points that are separated after translation. The dots mark points that are joined tangentially after translation.    
    The dashed lines represent the additional segments added in the process.}
    \label{fig:cut-and-paste}
\end{figure}

\begin{proof}[Proof of \Cref{lem:06-08}]
    For the first statement, let $\hat\gamma$ be the $e_1$-axis. After rotation and translation, we may assume that $\gamma_8$ intersects $\hat\gamma$ at the origin, lies entirely in the lower halfplane, and is axially symmetric with respect to the $e_2$-axis. Cutting $\gamma_8$ and $\hat\gamma$ at suitable points, translating the corresponding parts, and inserting a segment parallel to the $e_1$-axis, we obtain a complete curve having a tangential self-intersection with tangents $\pm e_2$, see the left column of \Cref{fig:cut-and-paste}. Denote by $\gamma^{\pm\infty}$ the non-compact arcs and by $\gamma^0$ the compact $C^0$-closed curve with a cusp.    
    By invariance of $B$ and $D$ and since segments in $e_1$-direction do not contribute, the adapted elastic energy has not changed, i.e.,
    \begin{align}
        E[\gamma^0]+E[\gamma^{+\infty}]+E[\gamma^{-\infty}]=E[\gamma_8].    
    \end{align}    
    By \Cref{thm:initialangle_borderline_minimality} with $\varphi=\frac{\pi}{2}$, we have $E[\gamma^{+\infty}]+E[\gamma^{-\infty}] \geq E[\gamma_S]$. On the other hand, applying \Cref{thm:teardrop_minimality} together with \Cref{rem:teardrop_minimality} to the compact part, we find $ E[\gamma^0]>  E[\hat{\gamma}_{T}]$. 
    
    For the second statement, we may proceed similarly. After rotation and translation, and 
    after cutting the circle and the borderline elastica at suitable points and inserting a segment parallel to the $e_1$-axis, we again end up with a complete curve having a tangential self-intersection with tangents $\pm e_2$ with the same adapted elastic energy, see the right column of \Cref{fig:cut-and-paste}. Proceeding exactly as above, the statement follows.
\end{proof}

We can now prove \Cref{thm:EF_embedded}.

\begin{proof}[Proof of \Cref{thm:EF_embedded}]
    Since the only embedded stationary solution with finite energy is the line by \cite[Theorem 7.11 (i)]{miura2025newenergymethodshortening}, as in the proof of \Cref{thm:EF_graphical}, we may assume that $E[\gamma(t, \cdot)]<E[\gamma_P]$ for all $t>0$. We define
    \begin{align}
        t_* \vcentcolon= \sup\{ \tau>0 : \gamma(t,\cdot) \text{ is embedded for all }t\in [0,\tau]\}.
    \end{align}
    Suppose $t_*<\infty$. Then $\gamma(t_*,\cdot)$ has a self-intersection which, by continuity and stability of transversal self-intersections, must be tangential. Since the rotation number $N$ is preserved and zero for embedded curves, we conclude from \Cref{thm:elastic_pendant_minimality} that $E[\gamma(t_*,\cdot)]\geq E[\gamma_P]$, a contradiction. It follows that $t_*=\infty$, so embeddedness is preserved.

    For the classification of the limit, we note that the borderline elastica $\gamma_b$ cannot arise as a subsequential limit $\gamma_\infty$ in \cite[Theorems 7.11 and 7.12]{miura2025newenergymethodshortening}, since $\gamma_b$ has a transversal self-intersection which is stable; any such limit $\gamma_\infty$ is thus a straight line. Arguing as in the proof of \cite[Theorem 2.2]{miura2025newenergymethodshortening}, the convergence statement in \Cref{thm:EF_embedded} follows.

    Lastly, to prove optimality of the energy threshold, we perturb $\gamma_P$ by replacing a sufficiently small neighborhood of the tangential self-intersection with the graphs of $\pm (x^4+\alpha)$ as in \cite[Section 3.2]{Miura_Muller_Rupp_2025_optimal}. The breaking of embeddedness can then be shown as in \cite[Proposition 3.4]{Miura_Muller_Rupp_2025_optimal} using well-posedness of the flow equation, see for instance \cite[Theorems A.2 and A.3]{miura2025newenergymethodshortening}.
\end{proof}

\section{Li--Yau type inequalities for complete curves}\label{sec:Li-Yau}

In this section we obtain Li--Yau type inequalities for complete planar curves, including a new variational characterization of the borderline elastica (\Cref{thm:borderline_minimality}), and also discuss some difficulties in higher codimensions.

A key step is to improve \Cref{thm:elastic_pendant_minimality}.
While sufficient for our application to the elastic flow, in fact, we can improve it by removing the assumption that the self-intersection is tangential, so that the statement becomes more analogous to the case of closed curves \cite{Miura_Muller_Rupp_2025_optimal}.

Note that here we focus on the planar case, so direction-preserving isometries are only given by translations, vertical reflections, and their compositions.

\begin{theorem}\label{thm:elastic_pendant_minimality_without_tangentiality}
    Let $\gamma\in W^{2,2}_{loc}(\R;\R^2)$ with $\inf_\R|\partial_x\gamma|>0$.
    If $N[\gamma]=0$ and $\gamma$ has a self-intersection, then
    \begin{align}
        E[\gamma]\geq E[\gamma_P],
    \end{align}
    where equality holds if and only if $\gamma$ coincides with the elastic pendant $\gamma_P$ up to orientation-preserving reparametrization and direction-preserving isometry.
\end{theorem}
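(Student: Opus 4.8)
The plan is to reduce to the tangential situation already settled in \Cref{thm:elastic_pendant_minimality} and to show that a \emph{genuinely transversal} self-intersection is strictly more expensive, so that the threshold $E[\gamma_P]$ can only be realized by a tangential configuration. Assuming $E[\gamma]<\infty$ (otherwise there is nothing to prove), I fix a self-intersection $\gamma(x_1)=\gamma(x_2)$, $x_1<x_2$. If it is tangential, \Cref{thm:elastic_pendant_minimality} applies verbatim and yields both the bound and the rigidity. Hence it suffices to prove that a transversal self-intersection forces the \emph{strict} inequality $E[\gamma]>E[\gamma_P]$. As in \Cref{thm:elastic_pendant_minimality} I split $\gamma$ into the two semi-infinite arcs $\gamma^1=\gamma|_{(-\infty,x_1]}$, $\gamma^3=\gamma|_{[x_2,\infty)}$ and the middle loop $\gamma^2=\gamma|_{[x_1,x_2]}$, and record the tangential angles $\theta_i=\theta(x_i)$ for a continuous lift $\theta$ of the tangent, which by \Cref{prop:finite_E_horizontal_end} together with $N[\gamma]=0$ satisfies $\theta(-\infty)=\theta(+\infty)$.

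The two outer arcs are treated exactly as before: applying \Cref{thm:initialangle_borderline_minimality} to $x\mapsto -\gamma^1(x_1-x)$ and to $x\mapsto \gamma^3(x-x_2)$ gives $E[\gamma^1]+E[\gamma^3]\geq 8\sin^2\frac{\varphi_1}{4}+8\sin^2\frac{\varphi_2}{4}$, where $\varphi_i\in[0,\pi]$ is the unoriented angle of $\partial_s\gamma(x_i)$ to $e_1$, i.e.\ $\cos\varphi_i=\cos\theta_i$. Since the loop closes up, \Cref{lem:energy_C0-closed} gives $E[\gamma^2]=\hat E[\gamma^2]$, so the middle contribution is the genuine elastic energy of a closed-up arc whose endpoint tangents enclose a \emph{corner angle} $\beta$ with $\beta\equiv\theta_1-\theta_2\pmod{2\pi}$ folded to $(-\pi,\pi]$; transversality means precisely $\beta\notin\{0,\pi\}$. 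The key new ingredient I would establish is a loop-with-corner analogue of \Cref{thm:teardrop_minimality}: a sharp lower bound $\hat E[\gamma^2]\geq \ell(\beta)$ for the minimal elastic energy among closed-up $W^{2,2}$ arcs with prescribed corner angle $\beta$, together with the fact that $\ell$ is strictly decreasing in $|\beta|$ on $(0,\pi]$ with $\ell(\pi)=\hat E[\hat\gamma_T]$. I would prove this along the lines of \Cref{thm:teardrop_minimality}: existence of a minimizer by the direct method, identification of the minimizer as a $\lambda$-elastica loop carrying a single corner, and a comparison argument that closes the cornered loop into an immersed curve of unit rotation number in order to invoke the two-teardrop minimality \Cref{thm:two-teardrop}; the teardrop, with its maximal corner $\beta=\pi$, is the cheapest such loop. (A cut-and-paste variant in the spirit of \Cref{lem:06-08}, inserting a borderline wedge $\gamma_b^\psi$ to complete the corner to a cusp, may allow one to bypass a separate minimality theorem and reduce everything to \Cref{thm:initialangle_borderline_minimality,thm:teardrop_minimality}.)

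Combining the three estimates gives $E[\gamma]\geq 8\sin^2\frac{\varphi_1}{4}+8\sin^2\frac{\varphi_2}{4}+\ell(\beta)$, and the final step is the joint optimization of the right-hand side subject to $\beta\equiv\theta_1-\theta_2$ and $\cos\varphi_i=\cos\theta_i$. The tangential value $E[\gamma_P]=(8-4\sqrt2)+\hat E[\hat\gamma_T]$ corresponds exactly to $\varphi_1=\varphi_2=\tfrac\pi2$ and $\beta=\pi$, and a direct computation shows that, for the fixed gap $\theta_1-\theta_2=\pi$, the outer cost $8\sin^2\frac{\varphi_1}{4}+8\sin^2\frac{\varphi_2}{4}$ is uniquely minimized at $\varphi_1=\varphi_2=\tfrac\pi2$ with value $8-4\sqrt2$. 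The main obstacle, and the heart of the argument, is to show that every transversal configuration ($\beta\neq\pi$) lies \emph{strictly} above $E[\gamma_P]$: making the outer tangents more horizontal lowers the $8\sin^2\frac{\varphi_i}{4}$ terms but necessarily pushes $\beta$ away from $\pi$ and thereby raises $\ell(\beta)$, and one must verify this trade-off never dips to or below $E[\gamma_P]$. Since $\ell(\beta)\to\ell(\pi)$ as $\beta\to\pi$, the loop term alone does not suffice, and I expect the delicate point to be a quantitative control of $\ell$ near $\beta=\pi$ (its one-sided derivative, or a matching convexity estimate) balanced against the derivative of the minimal outer cost in the gap $\theta_1-\theta_2$. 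Once this is in hand, transversality forces strict inequality, and together with \Cref{thm:elastic_pendant_minimality} this yields $E[\gamma]\geq E[\gamma_P]$ with equality attained only by the (tangential) elastic pendant.
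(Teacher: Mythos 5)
Your reduction to the tangential case is fine, but the transversal case---which is the entire content of this theorem beyond \Cref{thm:elastic_pendant_minimality}---is not proved; both new ingredients you rely on are left as announcements. First, the ``loop-with-corner'' bound $\hat E[\gamma^2]\ge\ell(\beta)$ with $\ell$ strictly decreasing in $|\beta|$ and $\ell(\pi)=\hat E[\hat\gamma_{T}]$ is a substantial new minimality/classification result (it amounts to identifying the corner-constrained minimizer inside the wavelike family for every $\beta$), and neither \Cref{thm:two-teardrop} nor a cut-and-paste in the spirit of \Cref{lem:06-08} obviously delivers it. Second---and you say so yourself---since $\ell(\beta)\to\ell(\pi)$ as $\beta\to\pi$, there is no soft argument: you must compare the one-sided derivative of $\ell$ at $\beta=\pi$ against the first-order decrease of $8\sin^2\frac{\varphi_1}{4}+8\sin^2\frac{\varphi_2}{4}$ as the gap $\theta_1-\theta_2$ moves off $\pi$, and this computation is not carried out (nor is it clear it closes, especially once one accounts for all the ways $\varphi_1,\varphi_2$ can relate to $\theta_1,\theta_2$ modulo $2\pi$). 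As written, the argument establishes nothing beyond the tangential case.

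The paper's proof avoids this trade-off entirely by running the direct method on the relaxed problem: minimize $E$ over all $N=0$ complete curves with \emph{some} self-intersection. Existence requires two nontrivial steps you do not need in your setup but which replace your missing estimates: the intersection times $s_j$ stay bounded above and away from zero (via \Cref{lem:energy_C0-closed} and a Fenchel-type bound), and the rotation number does not jump to $\pm1$ in the weak limit---this is where the total-curvature lower bound \Cref{lem:lower_bound_of_E_by_N} and the estimate $E[\gamma_P]<16$ of \Cref{rem:pendant<16} enter, forcing energy at least $16-O(\varepsilon)$ along the sequence if $|N[\gamma_\infty]|=1$. Then, instead of quantifying how expensive transversality is, one observes that a minimizer cannot have a transversal self-intersection at all: transversal intersections are stable, so every local perturbation remains admissible, whence the minimizer is a free elastica; with finite $E$ and $N=0$ the classification forces a line, contradicting self-intersection. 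Hence every minimizer is tangential and \Cref{thm:elastic_pendant_minimality} concludes. This stability-of-transversal-intersections observation is the single missing idea that would let you dispense with both $\ell(\beta)$ and the derivative comparison.
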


We will prove this by using the direct method.
However, since admissible curves have infinite length, standard compactness arguments can only ensure local-in-space convergence along a minimizing sequence, and in particular the rotation number may not be preserved in the limiting process.
To ensure that the limit curve has rotation number zero, we need to precisely estimate not only the energy of the limit curve but also of the minimizing sequence itself.

To this end we first obtain a sharp lower bound of the adapted elastic energy for planar curves in terms of the total curvature (or equivalently, the variation of tangential angle) in the spirit of \cite{Miura20}.
Let $\lfloor \cdot \rfloor$ denote the floor function.

\begin{lemma}\label{lem:lower_bound_of_E_by_N}
    Let $I\subset\R$ be an interval, possibly unbounded, and $\gamma\in W^{2,2}_{loc}(I;\R^2)$ be an arclength parametrized planar curve.
    Suppose that $\Delta:=\left|\int_I k ds\right|$ is well defined in the sense of improper integrals.
    Then
    \begin{align}
        E[\gamma] \geq 8 \left\lfloor \frac{\Delta}{2\pi} \right\rfloor +  16\sin^2\left(\frac{\Delta-2\pi \left\lfloor \frac{\Delta}{2\pi} \right\rfloor}{8}\right).
    \end{align}
    Equality holds if and only if either $\gamma$ is a line segment in the direction of $e_1$ (where $\Delta=0$ and $I$ is arbitrary), or $\gamma$ coincides with $\gamma_b$ up to parameter shift and direction-preserving isometry (where $\Delta= 2\pi$ and $I=\R$).
\end{lemma}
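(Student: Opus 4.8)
The plan is to reduce the estimate to the angle-constrained minimization already established in \Cref{thm:initialangle_borderline_minimality} and \Cref{cor:serpent_general}, by partitioning the total turning $\Delta$ into full loops plus a remainder. Write $\theta$ for the tangential angle, so that $k=\partial_s\theta$ and $\Delta=|\theta(\sup I)-\theta(\inf I)|$ (interpreting the endpoints as limits when $I$ is unbounded; these limits exist by \Cref{prop:finite_E_horizontal_end} whenever $E[\gamma]<\infty$, and if $E[\gamma]=\infty$ there is nothing to prove). First I would dispose of the case $E[\gamma]=\infty$ trivially, and then assume $E[\gamma]<\infty$, so that $\partial_s\gamma\to e_1$ at both available ends.

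The heart of the argument is the following decomposition. Set $q\vcentcolon=\lfloor \Delta/2\pi\rfloor$ and let $\psi\vcentcolon=\Delta-2\pi q\in[0,2\pi)$ denote the leftover angle. The idea is to select $q$ disjoint subarcs of $\gamma$, on each of which the tangential angle sweeps through a full $2\pi$, and one further subarc on which it sweeps through the remaining angle $\psi$; on each of these I would invoke the sharp lower bounds already proved. Concretely, on any subarc where $\partial_s\gamma$ returns to (a rotation of) $e_1$ after a full turn, the energy is at least $E[\gamma_b]=8$ by \eqref{eq:energy_8_borderline}, using the fact that such a subarc contains a point with leftward tangent and applying \Cref{cor:serpent_general} with $\varphi_0=\pi$; this accounts for the term $8q$. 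For the remaining angle $\psi$, I would use that $\gamma$ contains a point where the tangent makes angle at least $\psi$ with $e_1$ (in the appropriate half-turn sense), so that \Cref{thm:initialangle_borderline_minimality} yields a contribution of at least $8\sin^2(\psi/4)$ if $\psi\le\pi$, and a reflected/glued version giving $16\sin^2(\psi/8)$ should be arranged to match the stated remainder $16\sin^2\!\big((\Delta-2\pi q)/8\big)$. The additivity $E[\gamma]=\sum E[\gamma|_{\text{subarc}}]$ over disjoint subarcs, together with nonnegativity of $E$ on the complementary pieces, then assembles the full bound.

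The main obstacle is making the subarc selection rigorous and ensuring the contributions add up to exactly the stated expression rather than an estimate that is merely comparable. The subtlety is twofold. First, because $\theta$ need not be monotone, I cannot simply slice $[\inf I,\sup I]$ into intervals of angular width $2\pi$; instead I would define the selection through level sets of $\theta$, choosing cutting points where $\theta$ first attains successive multiples of the relevant angle, and I must verify these subarcs are genuinely disjoint and that each satisfies the angle hypothesis of \Cref{thm:initialangle_borderline_minimality} or \Cref{cor:serpent_general}. Second, reconciling the ``full-loop'' bound $8$ (proved via a single leftward tangent) with the ``half-turn'' bound $16\sin^2(\cdot/8)$ for the remainder requires care about whether the relevant subarc is one-sided (semi-infinite, matching $\gamma_b^\varphi$) or two-sided (matching the doubled serpent estimate of \Cref{cor:serpent_general}); the correct bookkeeping is what produces the $\sin^2(\cdot/8)$ rather than $\sin^2(\cdot/4)$.

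Finally, for the equality characterization I would track when every inequality used is simultaneously sharp. Each full-loop contribution equals $8$ only if that subarc is an isometric copy of $\gamma_b$ (by the rigidity in \Cref{cor:serpent_general}), and the complementary pieces contribute zero to $E$ only if they are straight segments in the $e_1$-direction. The only way to chain these rigidities consistently is $q=0$ with a degenerate (segment) curve when $\Delta=0$, or $q=1$, $\psi=0$ with $\gamma$ a single copy of $\gamma_b$ on all of $I=\R$ when $\Delta=2\pi$; any $q\ge 1$ together with a nontrivial remainder forces at least one genuine gluing of a $\gamma_b$-loop to a straight segment, which breaks $W^{2,2}$-smoothness or strictly increases the energy, ruling out equality. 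This yields precisely the two stated extremal cases.
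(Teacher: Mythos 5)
There is a genuine gap: the decomposition strategy cannot be executed with the tools you cite. \Cref{thm:initialangle_borderline_minimality} and \Cref{cor:serpent_general} only apply to semi-infinite or bi-infinite arcs whose tangent tends to $e_1$ at the infinite end(s); they say nothing about a \emph{finite interior} subarc on which $\theta$ sweeps through $2\pi$ with arbitrary tangents at its two endpoints. So the claim that each ``full-loop'' subarc contributes at least $8$ does not follow from \Cref{cor:serpent_general} with $\varphi_0=\pi$ --- that corollary bounds the total energy of the two semi-infinite tails flanking a single leftward-tangent point, it does not localize, and if you have $q$ such points the corresponding tails overlap, so the bounds cannot be added to produce $8q$. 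The same problem affects the remainder term: to get $16\sin^2(\psi/8)$ rather than $8\sin^2(\psi/4)$ you need a sharp lower bound for an arc traversing an angular interval of width $\psi$ with \emph{free} endpoint angles, which again is not what \Cref{thm:initialangle_borderline_minimality} provides. What is actually needed is a finite-arc estimate of the form $E[\gamma|_J]\geq \big|\int_{\theta(a_J)}^{\theta(b_J)}\sqrt{2-2\cos\theta}\,d\theta\big|$, depending only on the endpoint angles; but once you have that, the whole decomposition is superfluous, because applying it to $I$ itself already gives the result. Your equality discussion inherits the same issue and is, in addition, too vague (``breaks $W^{2,2}$-smoothness or strictly increases the energy'' is asserted, not proved).

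For comparison, the paper's proof is a single global step with no subarc selection: by Cauchy--Schwarz,
\begin{align}
E[\gamma]=\int_I\Big(\tfrac12|\partial_s\theta|^2+1-\cos\theta\Big)ds\;\geq\;\Big|\int_I\partial_s\theta\,\sqrt{2-2\cos\theta}\,ds\Big|=\int_{\theta(a)}^{\theta(b)}\sqrt{2-2\cos\theta}\,d\theta,
\end{align}
and the right-hand side depends only on the endpoint angles because the integrand is an exact derivative in $\theta$ --- this is exactly the localization-free mechanism your approach is missing, and it automatically handles non-monotone $\theta$. The stated bound then follows by splitting the $\theta$-integral into $\lfloor\Delta/2\pi\rfloor$ full periods (each worth $8$) plus a leftover interval $I'$, whose integral is minimized when $I'$ is centered at a multiple of $2\pi$, giving $16\sin^2(|I'|/8)$. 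Equality is then characterized through the ODE $\partial_s\theta=\pm\sqrt{2-2\cos\theta}$ (forcing a line or the borderline elastica) together with the observation that a non-degenerate optimally-placed $I'$ would require a rightward tangent on $\gamma_b$, which does not exist; this pins down $\Delta\in\{0,2\pi\}$ and the two extremal cases. If you want to salvage your write-up, replace the subarc machinery by this substitution argument; as it stands, the proposal does not constitute a proof.
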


\begin{proof}
    Up to vertical reflection we may assume that $\Delta=\int_Ikds\geq0$ throughout the proof.
    
    Let $a,b\in[-\infty,\infty]$ denote the endpoints of the domain interval $I$, where $a<b$.
    Let $\theta:I\to\R$ be a tangential angle function such that $\partial_s\gamma=(\cos\theta,\sin\theta)$.
    By assumption the limits $\theta(a):=\lim_{x\to a+}\theta(x)$ and $\theta(b):=\lim_{x\to b-}\theta(x)$ are well defined.
    Then we have $\Delta=\theta(b)-\theta(a)\geq0$.
    
    Now we estimate $E[\gamma]$.
    The Cauchy--Schwarz inequality yields
    \begin{align}\label{eq:0813-10}
        E[\gamma]&=\int_I \left(\frac{1}{2}|\partial_s\theta|^2+1-\cos\theta\right)ds \geq \int_I 2\cdot\frac{1}{\sqrt{2}}|\partial_s\theta|\cdot\sqrt{1-\cos\theta} ds\\
        &\geq \left|\int_I \partial_s\theta\sqrt{2-2\cos\theta} ds \right|= \int_{\theta(a)}^{\theta(b)} \sqrt{2-2\cos\theta} d\theta .
    \end{align}
    Now the last term is an integral of an explicit nonnegative periodic function with period $2\pi$, over an integral interval of width $\Delta=\theta(b)-\theta(a)$.
    Since the integral over one period is $\int_0^{2\pi}\sqrt{2-2\cos\theta} d\theta=8$, we obtain
    \begin{align}
        \int_{\theta(a)}^{\theta(b)} \sqrt{2-2\cos\theta} d\theta = 8 \left\lfloor \frac{\Delta}{2\pi} \right\rfloor +  \int_{I'} \sqrt{2-2\cos\theta} d\theta,
    \end{align}
    where $I'$ is an interval of width $|I'|=\Delta-2\pi \left\lfloor \frac{\Delta}{2\pi} \right\rfloor \in [0,2\pi)$.
    The last integral is minimized when $I'$ has the midpoint at the origin (or on $2\pi\Z$), thus bounded by
    \begin{align}\label{eq:0813-11}
        \int_{I'} \sqrt{2-2\cos\theta} d\theta \geq \int_{-|I'|/2}^{|I'|/2} \sqrt{2-2\cos\theta} d\theta = 16\sin^2\frac{|I'|}{8}.
    \end{align}
    Combining the above estimates and inserting $|I'|=\Delta-2\pi \left\lfloor \frac{\Delta}{2\pi} \right\rfloor$ complete the proof of the inequality.

    Finally we consider the case of equality.
    Equality is attained in \eqref{eq:0813-10} if and only if we have either $\partial_s\theta=\sqrt{2-2\cos\theta}$ or $\partial_s\theta=-\sqrt{2-2\cos\theta}$ on the whole interval $I$; our assumption $\Delta\geq0$ then implies $\partial_s\theta=\sqrt{2-2\cos\theta}$.
    Solving the ODE shows that $\gamma$ coincides with either the line $s\mapsto s e_1$ or the borderline elastica $\gamma_b$, or part thereof, up to translation and parameter shift, cf.\ \cite[(3.5)--(3.6)]{Miura20}.
    In the line case it is clear that equality holds, so hereafter we may only consider the borderline case.
    In addition, equality in \eqref{eq:0813-11} is attained if and only if either $|I'|=0$ or $I'$ is a non-degenerate interval with midpoint on $2\pi\Z$ (at which the curve $\gamma$ has a rightward tangent).
    However, under equality in \eqref{eq:0813-10}, the latter case cannot occur since $\gamma_b$ does not have a rightward tangent, and hence only the former can occur, yielding $\Delta\in2\pi\Z$, i.e., $N[\gamma]\in\Z$.
    Then, since $N[\gamma_b]=1$ and $k_b>0$, equality in \eqref{eq:0813-10} and in \eqref{eq:0813-11} both  hold if and only if $I=\R$, $\Delta =2\pi$, and $\gamma$ exactly coincides with $\gamma_b$ up to translation and parameter shift.
\end{proof}

In particular, if we assume in advance that $I=\R$ and the rotation number $N[\gamma]$ is well-defined as an integer, then $\Delta=2\pi|N[\gamma]|\in 2\pi\Z$ and hence we obtain the following sharp energy lower bound in terms of the rotation number.

\begin{corollary}\label{cor:lower_bound_of_E_by_N}
    Let $\gamma\in W^{2,2}_{loc}(\R;\R^2)$ with $\inf_\R|\partial_x\gamma|>0$.
    Then
    \begin{align}
        E[\gamma] \geq 8 |N[\gamma]|,
    \end{align}
    whenever both sides exist. 
    Equality holds if and only if either $\gamma$ is a line in the direction of $e_1$ (where $N[\gamma]=0$), or $\gamma$ coincides with $\gamma_b$ up to orientation-preserving reparametrization and direction-preserving isometry (where $|N[\gamma]|=1$).
\end{corollary}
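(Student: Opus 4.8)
The plan is to derive \Cref{cor:lower_bound_of_E_by_N} as a direct specialization of \Cref{lem:lower_bound_of_E_by_N}. Since the corollary assumes $\gamma\in W^{2,2}_{loc}(\R;\R^2)$ with $\inf_\R|\partial_x\gamma|>0$ and that $N[\gamma]$ exists, I would first reparametrize by arclength (which does not change $E$, $N$, or the total curvature) so that the hypotheses of \Cref{lem:lower_bound_of_E_by_N} are met with $I=\R$. The key observation is that, by the definition of the rotation number in \Cref{subsec:rotation_number} together with \Cref{prop:finite_E_horizontal_end}, the existence of $N[\gamma]$ as an integer forces the total curvature to satisfy $\Delta=\left|\int_\R k\,ds\right|=2\pi|N[\gamma]|\in 2\pi\Z$. (If $E[\gamma]=\infty$ the inequality is trivial, so one may assume $E[\gamma]<\infty$ and invoke \Cref{prop:finite_E_horizontal_end} to guarantee the tangent limits exist and $\Delta$ is well defined as an improper integral.)

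The main step is then purely arithmetic: plug $\Delta=2\pi|N[\gamma]|$ into the bound from \Cref{lem:lower_bound_of_E_by_N}. Because $\Delta/2\pi=|N[\gamma]|$ is a nonnegative integer, we have $\lfloor \Delta/2\pi\rfloor=|N[\gamma]|$ and the fractional remainder $\Delta-2\pi\lfloor\Delta/2\pi\rfloor$ vanishes, so the sine term drops out entirely. This yields
\begin{align}
    E[\gamma]\geq 8\lfloor \Delta/2\pi\rfloor + 16\sin^2(0) = 8|N[\gamma]|,
\end{align}
which is exactly the claimed inequality.

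For the equality case, I would simply transcribe the rigidity from \Cref{lem:lower_bound_of_E_by_N}. That lemma characterizes equality as either a line segment in the $e_1$-direction (with $\Delta=0$) or a copy of $\gamma_b$ (with $\Delta=2\pi$ and $I=\R$), up to parameter shift and direction-preserving isometry. Translating $\Delta=0$ and $\Delta=2\pi$ back into rotation-number language gives $N[\gamma]=0$ and $|N[\gamma]|=1$ respectively; since here $I=\R$ already, the line segment becomes a full line. The only mild subtlety is matching the notions of equivalence: the lemma states equality up to parameter shift, while the corollary phrases it as orientation-preserving reparametrization, so I would note that an arbitrary arclength reparametrization of a complete curve amounts to a parameter shift (possibly composed with orientation reversal, which for $\gamma_b$ is absorbed by a direction-preserving isometry via the vertical reflection symmetry used throughout, e.g.\ in the reduction $\Delta\ge 0$). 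I do not anticipate any genuine obstacle here; the entire content is already carried by \Cref{lem:lower_bound_of_E_by_N}, and this corollary is essentially a restatement in the special case that the floor function acts trivially.
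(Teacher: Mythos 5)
Your proposal is correct and follows essentially the same route as the paper: reduce to arclength parametrization, assume $E[\gamma]<\infty$ (so that $N[\gamma]\in\Z$ and $\Delta=2\pi|N[\gamma]|$), and specialize \Cref{lem:lower_bound_of_E_by_N}, with the floor term giving $8|N[\gamma]|$ and the sine term vanishing. The paper's proof is just a terser version of this, and your handling of the equality case matches its intended reading.
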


\begin{proof}
    We may assume that $\gamma$ is parametrized by arclength, and further $E[\gamma]<\infty$, otherwise there is nothing to show. In this case, $N[\gamma]\in \Z$, see \cite[Appendix B.2]{miura2025newenergymethodshortening}. The statement then follows from \Cref{lem:lower_bound_of_E_by_N}.
\end{proof}

\begin{remark}
    \Cref{cor:lower_bound_of_E_by_N} is not only optimal for $|N|\leq 1$, but also sharp for all $|N|\geq2$ in the sense that the lower bound cannot be improved.
    This fact can easily be shown by constructing a sequence of $|N|$ diverging borderline elasticae, suitably glued together.
\end{remark}

We are now in a position to prove \Cref{thm:elastic_pendant_minimality_without_tangentiality}.

\begin{proof}[Proof of \Cref{thm:elastic_pendant_minimality_without_tangentiality}]
    We prove existence and uniqueness of minimizers of $E$ among all admissible curves.

    \emph{Step 1: Existence.} 
    Take a minimizing sequence $\{\gamma_j\}\subset W^{2,2}_{loc}(\R;\R^2)$, that is, $\lim_{j\to\infty}E[\gamma_j]=\inf E$, where the infimum runs over all admissible curves.
    Note that $\inf E \leq E[\gamma_P]$ holds since $\gamma_P$ is admissible, while $E[\gamma_P]<16$ follows by a simple energy comparison (see \Cref{rem:pendant<16} below).
    Hence
    \begin{align}\label{eq:0811-03}
        \lim_{j\to\infty}E[\gamma_j] < 16.
    \end{align}
    
    After taking orientation-preserving reparametrizations and translations, we may assume that all curves in $\{\gamma_j\}$ are parametrized by arclength, and there is $\{s_j\}\subset(0,\infty)$ such that
    \begin{equation}\label{eq:self-intersection_j}
        \gamma_j(0)=\gamma_j(s_j)=0.
    \end{equation}

    We now show that the sequence $\{s_j\}$ is bounded above and away from zero.
    Using \eqref{eq:self-intersection_j} and \Cref{lem:energy_C0-closed}, we find $\sup_js_j=\sup_jL[\gamma_j|_{[0,s_j]}]=\sup_jD[\gamma_j|_{[0,s_j]}]\leq \sup_j{E}[\gamma_j|_{[0,s_j]}]\leq \sup_j E[\gamma_j] <\infty$, while the lower estimate $\inf_j s_j>0$ follows by the boundedness of the bending energy $B[\gamma_j|_{[0,s_j]}]\leq {E}[\gamma_j|_{[0,s_j]}]$ together with the estimate
    \begin{align}
        s_jB[\gamma_j|_{[0,s_j]}]=L[\gamma_j|_{[0,s_j]}]B[\gamma_j|_{[0,s_j]}]\geq \Big(\int_{\gamma_j|_{[0,s_j]}}|\kappa|ds\Big)^2\geq \pi^2,
    \end{align}
    where we used the Cauchy--Schwarz inequality and a version of Fenchel's theorem; more precisely, since $\gamma_j(0)=\gamma_j(s_j)$, we can create a closed curve by concatenating $\gamma_j$ and its point reflection at the endpoint, to which Fenchel's theorem applies.
    
    Hence, up to taking a subsequence (without relabeling), we may assume the existence of the limit $\lim_{j\to\infty}s_j = s_\infty\in(0,\infty)$.
    Now a standard compactness and diagonal argument ensure that a subsequence of $\{\gamma_j\}$ (without relabeling) locally converges to an arclength parametrized curve $\gamma_\infty\in W^{2,2}_{loc}(\R;\R^2)$ in the $W^{2,2}$-weak and $C^1$-strong sense.
    In addition, taking $j\to\infty$ in \eqref{eq:self-intersection_j} we also have $\gamma_\infty(0)=\gamma_\infty(s_\infty)=0$ for $s_\infty\neq0$, which ensures the existence of a self-intersection.
    By lower semicontinuity and \eqref{eq:0811-03}, we have
    \[
    E[\gamma_\infty]\leq\lim_{j\to\infty}E[\gamma_j] < 16.
    \]
    Hence $N[\gamma_\infty]\in\Z$ and moreover  \Cref{cor:lower_bound_of_E_by_N} implies $N[\gamma_\infty]\in\{-1,0,1\}$.
    For ensuring the admissibility of $\gamma_\infty$, the remaining task is to show that $|N[\gamma_\infty]|\neq1$.
    
    Suppose on the contrary that $|N[\gamma_\infty]|=1$; up to reflection we may assume $N[\gamma_\infty]=1$.
    Then \Cref{cor:lower_bound_of_E_by_N} yields $E[\gamma_\infty]\geq8$ and hence for any small $\varepsilon>0$ there is a large $R_\varepsilon>0$ such that
    \begin{align}\label{eq:0811-01}
        \liminf_{j\to\infty}E[\gamma_j|_{[-R_\varepsilon,R_\varepsilon]}] \geq  E[\gamma_\infty|_{[-R_\varepsilon,R_\varepsilon]}] \geq 8 - \varepsilon.
    \end{align}
    In addition, combining $N[\gamma_\infty]=1$ with \Cref{prop:finite_E_horizontal_end} implies that the tangential angle $\theta_\infty$ of $\gamma_\infty$ satisfies
    \begin{align}
        |\theta_\infty(-R_\varepsilon)|\leq \varepsilon, \quad |\theta_\infty(R_\varepsilon)-2\pi| \leq \varepsilon,
    \end{align}
    after taking larger $R_\varepsilon$ if necessary.
    Hence, there is a large $j_\varepsilon$ such that for all $j\geq j_\varepsilon$
    the tangential angle $\theta_j$ of $\gamma_j$ satisfies
    \begin{align}\label{eq:R_epsilon_angle}
        |\theta_j(-R_\varepsilon)| \leq 2\varepsilon, \quad |\theta_j(R_\varepsilon) - 2\pi| \leq 2\varepsilon.
    \end{align}
    Since $\lim_{s\to-\infty}\theta_j(s)=\lim_{s\to\infty}\theta_j(s)$ holds due to $N[\gamma_j]=0$, we find that for all $j\geq j_\varepsilon$, either $\lim_{s\to-\infty}\theta_j(s)\in 2\pi\Z\setminus\{0\}$ or $\lim_{s\to\infty}\theta_j(s)\in 2\pi\Z\setminus\{2\pi\}$ holds.
    This together with \eqref{eq:R_epsilon_angle} implies that
    \begin{align}
        \max\left\{|\theta_j(\infty)-\theta_j(R_\varepsilon)|,|\theta(-R_\varepsilon)-\theta(-\infty)| \right\} \geq 2\pi-2\varepsilon.
    \end{align}
    Hence, by \Cref{lem:lower_bound_of_E_by_N},
    \begin{align}\label{eq:0811-02}
        \max\{E[\gamma_j|_{[R_\varepsilon,\infty)}],E[\gamma_j|_{(-\infty,-R_\varepsilon]}]\} \geq 8-O(\varepsilon).
    \end{align}
    Therefore, combining \eqref{eq:0811-01} and \eqref{eq:0811-02}, for any $\varepsilon>0$ we obtain
    \begin{align}
        \lim_{j\to\infty}E[\gamma_j] &\geq \liminf_{j\to\infty}\Big(E[\gamma_j|_{[-R_\varepsilon,R_\varepsilon]}] + \max\{E[\gamma_j|_{[R_\varepsilon,\infty)}],E[\gamma_j|_{(-\infty,-R_\varepsilon]}]\}\Big)\\
        & \geq 16-O(\varepsilon).
    \end{align}
    Taking $\varepsilon\to0$ thus yields $\lim_{j\to\infty}E[\gamma_j] \geq 16$, which contradicts \eqref{eq:0811-03}.

    Thus we obtain $N[\gamma_\infty]=0$.
    Since $E[\gamma_\infty]\leq\lim_{j\to\infty}E[\gamma_j]=\inf E$ while $\gamma_\infty$ is admissible, this shows that $\gamma_\infty$ is a minimizer.

    \emph{Step 2: Uniqueness.}
    Let $\bar{\gamma}$ be any minimizer.
        Then $\bar{\gamma}$ must have a tangential self-intersection; indeed, if the self-intersection is transversal, then any local perturbation is admissible and hence the minimality of $\bar{\gamma}$ implies that $\bar{\gamma}$ is an elastica; however, since $E[\bar{\gamma}]<\infty$ and $N[\bar{\gamma}]=0$, this implies that $\bar{\gamma}$ is a line (again using the known classification of planar elasticae \cite{Singer_lecturenotes,Miura_elastica_survey}), which contradicts the fact that $\bar{\gamma}$ has a self-intersection.
    Thus we have shown that any minimizer $\bar{\gamma}$ with $N[\bar{\gamma}]=0$ has a tangential self-intersection.
    Therefore, \Cref{thm:elastic_pendant_minimality} implies the desired uniqueness up to invariances.
\end{proof}

\begin{remark}\label{rem:pendant<16}
    Here we rigorously verify the estimate $E[\gamma_P]<16$.
    By \Cref{lem:energy_C0-closed},
    \[
    E[\gamma_P] = E[\gamma_S] + E[\hat{\gamma}_{T}] = 8-4\sqrt{2} + \hat{E}[\hat{\gamma}_{T}].
    \]
    It suffices to show that $\hat{E}[\hat{\gamma}_{T}]<8+4\sqrt{2}$.
    This follows by the minimality of $\hat{\gamma}_{T}$ in \Cref{thm:teardrop_minimality}.
    For example, we deduce that $\hat{E}[\hat{\gamma}_{T}]<7\sqrt{2}\pi/3$ (which is clearly less than $8+4\sqrt{2}$), where the right hand side is the energy of the competitor constructed as follows.
    Let three circles of radius $1/\sqrt{2}$ be arranged so that they are mutually tangent and the points of tangency form the vertices $A$, $B$, and $C$ of an equilateral triangle. 
    Between each pair of vertices there are two circular arcs: a shorter one and a longer one. 
    The competitor is obtained by traversing the short arc from $A$ to $B$, the long arc from $B$ to $C$, and the short arc from $C$ back to $A$. 
    The resulting planar curve starts and ends at $A$ with opposite tangent directions.
    Since the curve consists of three circular arcs of radius $1/\sqrt{2}$ and angles $\pi/3$, $5\pi/3$, and $\pi/3$, the total length is $L=(1/\sqrt{2})(\pi/3+5\pi/3+\pi/3)=7\sqrt{2}\pi/6$ and the total energy is $\hat{E}=(\frac{1}{2}(\sqrt{2})^2+1) L = 7\sqrt{2}\pi/3$.
\end{remark}

Finally, applying the above results, we immediately obtain an unconditional Li--Yau type inequality for complete planar curves.

\begin{theorem}\label{thm:borderline_minimality}
    Let $\gamma\in W^{2,2}_{loc}(\R;\R^2)$ with $\inf_\R|\partial_x\gamma|>0$.
    If $\gamma$ has a self-intersection, then
    \begin{equation}
        E[\gamma] \geq 8,
    \end{equation}
    where equality holds if and only if $\gamma$ coincides with the borderline elastica $\gamma_b$ up to orientation-preserving reparametrization and direction-preserving isometry.
\end{theorem}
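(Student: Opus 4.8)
The plan is to reduce everything to the two variational results already in hand --- \Cref{thm:elastic_pendant_minimality_without_tangentiality} and \Cref{cor:lower_bound_of_E_by_N} --- and to organize the argument by the value of the rotation number. First I would discard the trivial case $E[\gamma]=\infty$, in which the inequality is immediate, and assume $E[\gamma]<\infty$. After an orientation-preserving arclength reparametrization, \Cref{prop:finite_E_horizontal_end} ensures both ends are horizontal, so (as in the proof of \Cref{cor:lower_bound_of_E_by_N}) the rotation number $N[\gamma]$ is a well-defined integer.

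Next I would split on $N[\gamma]$. If $N[\gamma]=0$, then because $\gamma$ has a self-intersection, \Cref{thm:elastic_pendant_minimality_without_tangentiality} yields $E[\gamma]\geq E[\gamma_P]$, and the strict bound $E[\gamma_P]>8$ from \Cref{rem:pendant>8} upgrades this to $E[\gamma]>8$. If $|N[\gamma]|\geq1$, then \Cref{cor:lower_bound_of_E_by_N} gives $E[\gamma]\geq 8|N[\gamma]|\geq 8$, with equality exactly when $|N[\gamma]|=1$ and $\gamma$ coincides with $\gamma_b$ up to the stated invariances. Taken together, these cases prove $E[\gamma]\geq8$ unconditionally.

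For the rigidity I would argue that the regimes $N[\gamma]=0$ and $|N[\gamma]|\geq 2$ both force strict inequality ($E[\gamma]>8$ and $E[\gamma]\geq 16$ respectively), so that $E[\gamma]=8$ can only occur when $|N[\gamma]|=1$; there the equality characterization of \Cref{cor:lower_bound_of_E_by_N} pins $\gamma$ down to $\gamma_b$. A point deserving explicit mention is that $\gamma_b$ is genuinely self-intersecting --- its first coordinate $s\mapsto s-2\tanh s$ in \eqref{eq:borderline} is non-monotone, producing a transversal self-intersection on the $e_2$-axis --- so $\gamma_b$ is itself an admissible competitor attaining the bound, whereas the straight line that also attains equality in \Cref{cor:lower_bound_of_E_by_N} is excluded here by the self-intersection hypothesis.

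Since the analytic substance lives in the two cited results, no individual step remains genuinely difficult; the only part requiring care is the bookkeeping of the equality case, that is, verifying that the case split is exhaustive and that the self-intersection hypothesis simultaneously excludes the line and admits $\gamma_b$, making the characterization both nonvacuous and sharp.
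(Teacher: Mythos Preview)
Your proposal is correct and follows essentially the same argument as the paper: reduce to $E[\gamma]<\infty$, split by rotation number, apply \Cref{thm:elastic_pendant_minimality_without_tangentiality} with \Cref{rem:pendant>8} when $N[\gamma]=0$ and \Cref{cor:lower_bound_of_E_by_N} when $|N[\gamma]|\geq1$. The extra remarks you include (that $\gamma_b$ is genuinely self-intersecting and the line is excluded) are accurate elaborations but not additional ideas beyond the paper's proof.
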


\begin{proof}
    We may assume that $E[\gamma]<\infty$.
    In this case $N[\gamma]\in\Z$.
    If $|N[\gamma]|\geq1$, then the inequality and the rigidity in the equality case follow from \Cref{cor:lower_bound_of_E_by_N} directly.
    On the other hand, if $N[\gamma]=0$, then \Cref{thm:elastic_pendant_minimality_without_tangentiality} together with \Cref{rem:pendant>8} implies that $E[\gamma]>8$.
    The proof is thus complete.
\end{proof}

The following statements are direct corollaries of \Cref{thm:borderline_minimality}.

\begin{corollary}
    Let $\gamma\in W^{2,2}_{loc}(\R;\R^2)$ with $\inf_\R|\partial_x\gamma|>0$.
    If $E[\gamma]<8$, then $\gamma$ is embedded.
\end{corollary}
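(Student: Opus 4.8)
The plan is to argue by contraposition, invoking \Cref{thm:borderline_minimality} directly. First I would pin down the notion of embeddedness: a complete curve $\gamma\colon\R\to\R^2$ is embedded precisely when it is injective, so that failing to be embedded means there exist parameters $x_1\neq x_2$ with $\gamma(x_1)=\gamma(x_2)$ --- that is, $\gamma$ has a self-intersection in the sense used in \Cref{thm:borderline_minimality}. With this identification the contrapositive is immediate: if $\gamma$ is not embedded, then it has a self-intersection, and \Cref{thm:borderline_minimality} forces $E[\gamma]\geq 8$. Equivalently, the assumption $E[\gamma]<8$ rules out any self-intersection, which gives embeddedness.

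The only points requiring attention are bookkeeping in nature. I would observe that the hypotheses of \Cref{thm:borderline_minimality} --- namely $\gamma\in W^{2,2}_{loc}(\R;\R^2)$ and $\inf_\R|\partial_x\gamma|>0$ --- are inherited verbatim in the corollary, so the theorem applies with no further work. I would also note that the finiteness $E[\gamma]<8<\infty$ places $\gamma$ squarely in the regime where the preliminary structure is available (horizontal ends by \Cref{prop:finite_E_horizontal_end} and a well-defined integer rotation number), which is what legitimizes the statement of \Cref{thm:borderline_minimality} in the first place.

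There is essentially no obstacle here: the entire substantive content sits in \Cref{thm:borderline_minimality}, and this corollary merely repackages its lower bound as a sufficient condition for embeddedness. The use of the \emph{strict} inequality $E[\gamma]<8$ is precisely what keeps the argument clean, since it excludes the borderline elastica $\gamma_b$ itself --- the unique non-embedded curve attaining $E=8$ --- so that no analysis of the equality case is needed and the conclusion follows in one line.
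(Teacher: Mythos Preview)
Your proposal is correct and matches the paper's approach exactly: the corollary is stated as a direct consequence of \Cref{thm:borderline_minimality}, and the contrapositive you describe is precisely how that deduction works. There is nothing to add.
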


\begin{corollary}
    Let $\gamma_0\in\dot{C}^\infty(\R;\R^2)$ with $\inf_\R|\partial_x\gamma|>0$.
    If $E[\gamma_0]\leq 8$, then the unique elastic flow $\gamma:[0,\infty)\times\R\to\R^2$ starting from $\gamma_0$ is either a stationary solution given by a borderline elastica, or $\gamma(t,\cdot)$ is embedded for all $t\geq0$.
\end{corollary}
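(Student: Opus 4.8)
The plan is to combine the energy monotonicity built into the complete elastic flow with the rigidity statement of \Cref{thm:borderline_minimality}. First I would record that, by the energy identity \eqref{eq:energy_decay}, the adapted energy is non-increasing along the flow, so that $E[\gamma(t,\cdot)]\leq E[\gamma_0]\leq 8$ for every $t\geq0$. I would then split the analysis according to whether or not the flow is stationary. Here I call the flow \emph{stationary} if $\gamma(t,\cdot)$ coincides with $\gamma_0$ (up to reparametrization and translation) for all $t\geq0$, equivalently $\partial_t\gamma\equiv0$.

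If the flow is stationary, then $\gamma_0$ is a critical point of the adapted energy and hence satisfies the elastica equation. Either $\gamma_0$ is embedded, in which case the second alternative of the corollary holds trivially, or $\gamma_0$ has a self-intersection, in which case \Cref{thm:borderline_minimality} together with $E[\gamma_0]\leq 8$ forces $E[\gamma_0]=8$ and identifies $\gamma_0$ with the borderline elastica $\gamma_b$ up to invariances. Since $\gamma_b$ solves the stationary equation $k_{ss}+\frac12 k^3-k=0$ recorded in \Cref{sec:borderline}, it is indeed a stationary solution of \eqref{eq:EF}, so the first alternative holds.

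If the flow is not stationary, I would upgrade the energy monotonicity to strict monotonicity: for each $t>0$ one has $E[\gamma(t,\cdot)]<E[\gamma_0]\leq8$. Indeed, if $E[\gamma(t_1,\cdot)]=E[\gamma_0]$ for some $t_1>0$, then \eqref{eq:energy_decay} gives $\int_0^{t_1}\int|\partial_t\gamma|^2\,ds\,dt=0$, so $\gamma$ is stationary on $[0,t_1]$; as $\gamma(t_1,\cdot)=\gamma_0$ is then a critical point, forward uniqueness of the flow (see \cite{miura2025newenergymethodshortening}) propagates stationarity to all $t\geq t_1$, contradicting non-stationarity. Consequently $E[\gamma(t,\cdot)]<8$ for all $t>0$, and the contrapositive of \Cref{thm:borderline_minimality}, namely that a curve with $E<8$ admits no self-intersection, shows that $\gamma(t,\cdot)$ is embedded for every $t>0$. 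For $t=0$ I would note that $\gamma_0$ itself must be embedded: a self-intersection would again force $\gamma_0=\gamma_b$ by \Cref{thm:borderline_minimality}, but $\gamma_b$ is stationary, contradicting the non-stationary assumption. Hence $\gamma(t,\cdot)$ is embedded for all $t\geq0$, which is the second alternative.

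The main obstacle is the passage from non-increasing to strictly decreasing energy: one must rule out a flow that sits at the critical level for a positive time before moving, and this is precisely where the forward uniqueness of the complete elastic flow established in \cite{miura2025newenergymethodshortening} is essential. The remaining inputs, the non-embeddedness of $\gamma_b$ and its status as a stationary solution of \eqref{eq:EF}, follow directly from the explicit formulas collected in \Cref{sec:borderline}.
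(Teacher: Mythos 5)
Your proposal is correct and follows essentially the same route the paper intends: the dichotomy ``either the flow is stationary or $E[\gamma(t,\cdot)]<8$ for all $t>0$'' from the energy identity \eqref{eq:energy_decay} (exactly as used in the proofs of \Cref{thm:EF_graphical,thm:EF_embedded}), combined with the rigidity in \Cref{thm:borderline_minimality} to handle both the stationary self-intersecting case and the initial time $t=0$. The paper states this as a direct corollary without proof, and your write-up supplies the details (in particular the uniqueness argument upgrading monotone to strictly decreasing energy) consistently with the paper's framework.
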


In fact, we conjecture that \Cref{thm:borderline_minimality} remains true for all codimensions.

\begin{conjecture}\label{conj:boderline_minimality_high_codim}
    Let $n\geq2$ and $\gamma\in W^{2,2}_{loc}(\R;\R^n)$ with $\inf_\R|\partial_x\gamma|>0$.
    If $\gamma$ has a self-intersection, then
    \begin{equation}
        E[\gamma] \geq 8,
    \end{equation}
    where equality holds if and only if $\gamma$ coincides with the borderline elastica $\gamma_b$ up to orientation-preserving reparametrization and direction-preserving isometry.
\end{conjecture}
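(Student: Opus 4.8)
The plan is to run a direct method in $\R^n$, exactly parallel to the proof of \Cref{thm:elastic_pendant_minimality_without_tangentiality}, and then reduce the analysis of any minimizer to the planar classification behind \Cref{thm:borderline_minimality}. A useful preliminary observation is that, in contrast to the planar $N=0$ problem, the only admissibility constraint here---\emph{having a self-intersection}---is automatically preserved under the local convergence produced by compactness, so no rotation-number bookkeeping is needed to extract a minimizer. Concretely, I would take a minimizing sequence $\{\gamma_j\}$ (recall $\inf E\le E[\gamma_b]=8$), reparametrize by arclength, and normalize the double point by $\gamma_j(0)=\gamma_j(s_j)=0$ with $s_j>0$. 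As in \Cref{thm:elastic_pendant_minimality_without_tangentiality}, \Cref{lem:energy_C0-closed} gives $s_j=L[\gamma_j|_{[0,s_j]}]=D[\gamma_j|_{[0,s_j]}]\le E[\gamma_j]$, while the Fenchel argument---reflecting the sub-arc through the double point to form a $C^1$-closed curve, which is valid in every codimension---yields $s_jB[\gamma_j|_{[0,s_j]}]\ge\pi^2$. Hence $\{s_j\}$ is bounded and bounded away from $0$, and a local $W^{2,2}$-weak/$C^1$-strong limit $\gamma_\infty$ satisfies $\gamma_\infty(0)=\gamma_\infty(s_\infty)=0$ with $s_\infty\in(0,\infty)$. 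By lower semicontinuity $\gamma_\infty$ is a minimizer that still self-intersects, so existence is not the obstruction.

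The difficulty lies entirely in \emph{characterizing} $\gamma_\infty$ and computing its energy. As in \cite{Miura_Muller_Rupp_2025_optimal} and \Cref{thm:elastic_pendant_minimality}, I would distinguish whether the self-intersection is transversal or tangential. If it is transversal, every compactly supported perturbation is admissible, so $\gamma_\infty$ is a genuine finite-energy elastica, i.e. a solution of $\nabla_s^2\kappa+\frac12|\kappa|^2\kappa-\kappa=0$ with $\partial_s\gamma_\infty\to e_1$ at both ends (\Cref{prop:finite_E_horizontal_end}); if it is tangential, the three pieces are elasticae meeting under matching conditions, i.e. $\gamma_\infty$ is a broken elastica of the same type as $\gamma_P$. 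In either case the proof would be completed by the following \emph{classification statement}: every finite-energy elastica in $\R^n$ asymptotic to a line at both ends is planar (a straight line, or a direction-preserving-isometric copy of $\gamma_b$). Granting this, $\gamma_\infty$ is planar; lines do not self-intersect, and \Cref{thm:borderline_minimality} then identifies $\gamma_b$ as the unique self-intersecting competitor, giving $E[\gamma_\infty]=8$ together with the asserted rigidity.

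The hard part is precisely this planarity, i.e. the absence of non-planar homoclinic-type elasticae in $\R^n$ of small energy. The borderline elastica is the homoclinic orbit of the elastica system to the straight-line state $\kappa\equiv0$, and ruling out non-planar homoclinic orbits requires analyzing the full Frenet/Langer--Singer system in $\R^n$ and showing that finiteness of $E$ (equivalently integrability of $|\kappa|^2$ and of $1-\langle\partial_s\gamma,e_1\rangle$) forces the torsion to vanish; see \cite{Singer_lecturenotes,Miura_elastica_survey}. I do not see how to obtain this from the planar tools above, and a genuinely new argument controlling the torsion of a homoclinic elastica appears to be needed. In the tangential case one additionally lacks a higher-codimension analogue of the teardrop minimality (\Cref{thm:two-teardrop,thm:teardrop_minimality}), whose proof is intrinsically two-dimensional through the rotation number $|N|=1$.

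Finally, I would record why the elementary routes fail, to delimit the obstacle. Writing $\theta(s)\in[0,\pi]$ for the spherical distance of the unit tangent to $e_1$ (so that $\cos\theta=\langle\partial_s\gamma,e_1\rangle$ and $|\partial_s\theta|\le|\kappa|$, since distance to a fixed point on the sphere is $1$-Lipschitz), the Cauchy--Schwarz step of \Cref{lem:lower_bound_of_E_by_N} gives the sharp latitude bound $E[\gamma]\ge 16\sin^2(\theta_{\max}/4)$, which reaches $8$ only when the tangent attains the antipode $-e_1$, i.e. $\theta_{\max}=\pi$. A self-intersection does not force this for $n\ge3$: the closing condition $\int_{x_1}^{x_2}\partial_s\gamma\,ds=0$ can be satisfied using the extra directions while keeping $\langle\partial_s\gamma,e_1\rangle$ bounded away from $-1$. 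Likewise, the all-codimension closed-curve Li--Yau inequality \cite{MR4631455} cannot be imported by truncation-and-closure, because any closure of a curve asymptotic to the $e_1$-axis must travel back a distance comparable to the truncation length against $e_1$, incurring direction energy that diverges with the truncation. This incompatibility of the intrinsically planar rotation-number technique and the intrinsically finite-length closing-up technique with the non-compact, higher-codimensional setting is exactly what makes the conjecture delicate.
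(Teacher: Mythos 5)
The statement you are trying to prove is stated in the paper as \Cref{conj:boderline_minimality_high_codim}, i.e.\ as an open \emph{conjecture}: the paper offers no proof, only a discussion of why its own techniques break down for $n\geq 3$ (the rotation number used throughout \Cref{thm:elastic_pendant_minimality,thm:elastic_pendant_minimality_without_tangentiality} is intrinsically planar, and the relaxation-of-the-free-boundary strategy of \cite{MR4631455} fails because a semi-infinite arc can be relaxed all the way down to a semi-line of zero energy). So there is nothing in the paper to compare your argument against line by line; the relevant question is only whether your proposal closes the gap, and by your own admission it does not.

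That said, your partial analysis is sound and is consistent with (in places slightly sharper than) the paper's own diagnosis. The direct-method setup carries over verbatim to $\R^n$: the normalization $\gamma_j(0)=\gamma_j(s_j)=0$, the upper bound $s_j\leq E[\gamma_j]$ via \Cref{lem:energy_C0-closed}, the lower bound via Fenchel applied to the point-reflected closed-up sub-arc, and the persistence of the double point in the $C^1_{loc}$ limit all work in every codimension, so a minimizer exists. In the transversal branch, the minimizer is a genuine elastica with finite $E$ and horizontal ends (\Cref{prop:finite_E_horizontal_end}), and the planarity you flag as missing is arguably within reach of the classical Langer--Singer conservation laws (every $\lambda$-elastica lies in a three-dimensional affine subspace, and in $\R^3$ the first integrals $k^2\tau=c$ and $(k')^2+\tfrac14 k^4+c^2k^{-2}-\tfrac{\lambda}{2}k^2=C$ force $c=0$ once $k\to 0$ along the ends, cf.\ \cite{Singer_lecturenotes,Miura_elastica_survey}); you should not treat that as the main obstruction. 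The genuine gap is the tangential branch: since the infimum is at most $8$, you must exclude a non-planar tangentially self-intersecting competitor with $E\leq 8$, and here neither a higher-codimension analogue of the teardrop minimality (\Cref{thm:two-teardrop,thm:teardrop_minimality}) nor any substitute for the rotation-number case distinction in the proof of \Cref{thm:elastic_pendant_minimality} is available. Your proposal correctly names this obstruction but does not overcome it, so the argument remains incomplete --- exactly as the paper's concluding discussion predicts.
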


However, as opposed to the codimension one case, we cannot rely on the rotation number. Instead, we need to solve a variational problem with a constraint, involving a free boundary given by the self-intersections. 

In the closed curve case \cite{MR4631455}, a method of relaxing the free boundary condition was successful. Indeed, any closed curve with a self-intersection may be split into two loops and, since they arise from a closed curve, the tangents at the endpoints of the loops must align suitably. Minimizing the elastic energy thus naturally comes with $C^1$-type boundary conditions relating the two loops. Relaxing these to $C^0$-boundary conditions makes the problem analytically much simpler, since each loop can be minimized independently. Remarkably, by explicitly identifying the minimizer of the relaxed problem, the figure-eight elastica, both variational problems were shown to have the same minimizer in \cite{MR4631455}. 

This approach cannot be applied in the non-compact regime. Indeed, splitting a complete curve at a self-intersection yields one loop and two semi-infinite arcs. However, for example, the minimal energy $E$ among semi-infinite arcs is zero (attained by semi-lines) which does not correspond to the optimal shape $\gamma_b$ in \Cref{thm:borderline_minimality}. Hence, as opposed to \cite{MR4631455}, minimizing $E$ independently on all three parts necessarily results in a non-optimal threshold.

\bibliography{Lib}
\end{document}